\newcommand*{\NN}{\ensuremath{\mathbb{N}}}
\newcommand*{\QQ}{\ensuremath{\mathbb{Q}}}
\newcommand*{\ZZ}{\ensuremath{\mathbb{Z}}}
\newcommand*{\RR}{\ensuremath{\mathbb{R}}}
\newcommand*{\CC}{\ensuremath{\mathbb{C}}}
\newcommand*{\FF}{\ensuremath{\mathbb{F}}}
\newcommand*{\PP}{\ensuremath{\mathcal{P}}}
\newcommand*{\LL}{\ensuremath{\mathcal{L}}}
\newcommand*{\from}{\colon}
\newcommand*{\ceq}{\coloneqq}
\newcommand*{\eps}{\varepsilon}
\newcommand*{\set}[1]{\{{#1}\}}
\newcommand*{\dset}[1]{\{{#1}\}}
\newcommand*{\tup}[1]{({#1})}
\newcommand*{\dtup}[1]{({#1})}
\newcommand*{\norm}[1]{ \|  #1 \|}
\newcommand*{\indicator}{1}
\newcommand*{\subim}{\le}
\newcommand*{\subpr}{<}
\def\reals{\mathbb R}
\def\complex{\mathbb C}
\def\integers{\mathbb Z}
\def\rationals{\mathbb Q}
\def\field{\mathbb F}
\newcommand{\lt}{\left}
\newcommand{\rt}{\right}
\newcommand*{\restr}[2]{#1\big|_{#2}}
\newcommand*{\barshbl}{{\bar{s}_\mathrm{HBL}}}
\newcommand*{\shbl}{{s_\mathrm{HBL}}}
\newcommand*{\shbla}[1]{s_{\mathrm{HBL}, #1 }}
\newcommand*{\Mat}[2]{M_{#1}({#2})}
\DeclareMathOperator{\opDim}{dim}
\newcommand*{\Dim}[1]{\opDim({#1})}
\DeclareMathOperator{\opRank}{rank}
\newcommand*{\Rank}[1]{\opRank({#1})}
\DeclareMathOperator{\opKernel}{ker}
\newcommand*{\Kernel}[1]{\opKernel({#1})}
\newcommand*{\sectn}[1]{Section~{#1}}
\newcommand{\ignore}[1]{}
\numberwithin{equation}{section}
\def\th@plain{%
  \thm@notefont{}
  \itshape 
}
\def\th@definition{%
  \thm@notefont{}
  \normalfont 
}
\theoremstyle{plain}
\newtheorem{theorem}{Theorem}[section]
\newtheorem{definition}[theorem]{Definition}
\newtheorem{notation}[theorem]{Notation}
\newtheorem{remark}[theorem]{Remark}
\newtheorem{lemma}[theorem]{Lemma}
\newtheorem{proposition}[theorem]{Proposition}
\theoremstyle{definition}
\def\fg{{\mathfrak G}}
\def\fv{{\mathbb V}}
\title{%
On H\"older-Brascamp-Lieb Inequalities\\
for Torsion-Free Discrete Abelian Groups}
\author{%
Michael Christ\thanks{%
Mathematics Dept., Univ.\ of California, Berkeley ({\tt mchrist@berkeley.edu}).},
James Demmel\thanks{%
Computer Science Div.\ and Mathematics Dept., Univ.\ of California, Berkeley ({\tt demmel@cs.berkeley.edu}).},
Nicholas Knight\thanks{%
Computer Science Div., Univ.\ of California, Berkeley ({\tt knight@cs.berkeley.edu}).},
Thomas Scanlon\thanks{%
Mathematics Dept., Univ.\ of California, Berkeley ({\tt scanlon@math.berkeley.edu}).},
and Katherine Yelick\thanks{%
Computer Science Div., Univ.\ of California, Berkeley and Lawrence Berkeley Natl.\ Lab.\ ({\tt yelick@cs.berkeley.edu}).}}
\date{October 6, 2015}
\begin{document}
\maketitle

\begin{abstract}
H\"older-Brascamp-Lieb inequalities provide upper bounds for a class of multilinear expressions, in terms of $L^p$ norms
of the functions involved.  They have been extensively studied for functions defined on Euclidean spaces.
Bennett-Carbery-Christ-Tao have initiated the study of these inequalities for discrete Abelian groups
and, in terms of suitable data, have characterized the set of all tuples of exponents
for which such an inequality holds for specified data, as the convex polyhedron defined by a particular finite set of affine inequalities.
 
In this paper we advance the theory of such inequalities for torsion-free discrete Abelian groups in three respects.
The optimal constant in any such inequality is shown to equal $1$ whenever it is finite.
An algorithm that computes the admissible polyhedron of exponents is developed. 
It is shown that nonetheless, existence of an algorithm that computes the full list of inequalities
in the Bennett-Carbery-Christ-Tao description of the admissible polyhedron for all data,
is equivalent to an affirmative solution of Hilbert's Tenth Problem over the rationals.
That problem remains open.
\end{abstract}

\section{Main Results}

By a H\"older-Brascamp-Lieb inequality is typically meant an inequality of the form
\begin{equation} \label{eq:HBLclassical}
\int_{\reals^d} \prod_{j=1}^m f_j(L_j(x))\,dm(x)
\le C
\prod_{j=1}^m \norm{f_j}_{L^{p_j}(\reals^{d_j})}
\end{equation}
where $L_j:\reals^d\to\reals^{d_j}$ are surjective linear mappings,
the functions $f_j$ are Lebesgue measurable and nonnegative,
$m$ denotes Lebesgue measure on $\reals^d$,
the exponents $p_j$ belong to the natural range $[1,\infty]$,
and $C$ is some finite constant which is independent of the functions $f_j$ but may 
depend on the dimensions $d,d_j$, the exponents $p_j$,  and the mappings $L_j$.
Fundamental examples include H\"older's inequality, Young's convolution inequality, and the Loomis-Whitney inequality.
A substantial literature concerning this class of inequalities has developed. 

In this paper we are concerned with analogous inequalities in which $\reals^d,\reals^{d_j}$ are
replaced by discrete Abelian groups $G,G_j$ respectively, where $G$ is torsion-free. Lebesgue measure is replaced by counting measure.
This situation was studied, without the restriction that $G$ be torsion-free, in \cite{BCCT10}.

We have found these inequalities for $G=\integers^d$ to be relevant to the analysis of lower bounds
for communication in a class of algorithms that arise, or may potentially arise, in computer science.
The present paper, motivated by this connection, serves to develop the underlying analytic theory.
The connection and its applications are explored in a companion paper \cite{CDKSY14B}. 

\begin{definition}
\label{def:groupdatumnew}
An \emph{Abelian group HBL datum} $\fg$ is a $3$--tuple 
\[ \fg=\dtup{G,\dtup{G_j},\dtup{\phi_j}} = \dtup{G,\dtup{G_1,\ldots,G_m},\dtup{\phi_1,\ldots,\phi_m}} \] 
where $G$ and each $G_j$ are finitely generated Abelian groups, $G$ is torsion-free, and each $\phi_j\from G \to G_j$ 
is a group homomorphism. 
\end{definition}

When this notation is employed, it is always implicitly assumed that $j\in\set{1,2,\ldots,m}$.
By a H\"older-Brascamp-Lieb inequality associated to an Abelian group HBL datum, we mean one of the form
\begin{equation} \label{BL-C}
\sum_{x\in G} \prod_{j=1}^m f_j(\phi_j(x)) \le C\prod_{j=1}^m \norm{f_j}_{\ell^{1/s_j}} \qquad  
\text{for all functions } 0 \le f_j \in \ell^{1/s_j}(G_j)
\end{equation}
where each $s=(s_1,\dots,s_m)\in[0,1]^m$,
and where $C<\infty$ may depend on 
$\dtup{G,\dtup{G_j},\dtup{\phi_j}}$
and on $s$, but is independent of the functions $f_j$.
The $\ell^p$ norm is $\norm{f}_{\ell^p(G)} = (\sum_{x\in G} |f(x)|^p)^{1/p}$
for $p<\infty$, while $\norm{f}_{\ell^\infty(G)} = \sup_{x\in G} |f(x)|$.
We will often write $\norm{\cdot}_p$ as shorthand for $\norm{\cdot}_{\ell^p}$.

A certain convex polytope $\PP\tup{G,\tup{G_j},\tup{\phi_j}}\subseteq[0,1]^m$ plays a central role in the theory.
\begin{definition} \label{def:P}
For any Abelian group HBL datum $\fg=\tup{G,\tup{G_j},\tup{\phi_j}}$, 
$\PP(\fg)=\PP\tup{G,\tup{G_j},\tup{\phi_j}}$
denotes the set of all $s\in[0,1]^m$ that satisfy 
\begin{equation} \label{subcriticalhypothesis2}
\Rank{H}\le \sum_{j=1}^m s_j\Rank{\phi_j(H)} \qquad \text{for every subgroup } H \subim G.
\end{equation}
\end{definition}

$\PP(\fg)=\PP\tup{G,\tup{G_j},\tup{\phi_j}}$ is the set of all $s\in\reals^m$ specified by the inequalities 
$0\le s_j\le 1$ for all $j$ together with all inequalities
$\sum_{j=1}^m s_j r_j \ge r$, where $\tup{r,r_1,\ldots,r_m}$ ranges over all elements
of $\set{1,2,\ldots,\Rank{G}} \times \prod_{j=1}^m \set{0,1,\ldots,\Rank{\phi_j(G)}}$ 
for which there exists a finitely generated subgroup $H \subim G$ 
that satisfies $\Rank{H} = r$ and $\Rank{\phi_j(H)}=r_j$ for all $j$.
Although infinitely many candidate subgroups $H$ must potentially be considered in any calculation of $\PP(\fg)$, 
there exists a collection of fewer than $(\Rank{G}+1)^{m+1}$ tuples $\tup{r,r_1,\ldots,r_m}$ 
that suffices to generate all of the inequalities defining $\PP(\fg)$.
Thus $\PP$ is a convex polytope with finitely many extreme points. 
It is equal to the convex hull of this finite set.

The first of our three main results states that the optimal constant $C$ in such an inequality equals $1$.
\begin{theorem} \label{thm:2new}
For any Abelian group HBL datum $\fg=\tup{G,\tup{G_j},\tup{\phi_j}}$ 
and any $s\in \PP(\fg)$, 
\begin{equation} \label{BL}
\sum_{x\in G} \prod_{j=1}^m f_j(\phi_j(x)) \le \prod_{j=1}^m \norm{f_j}_{1/s_j} \qquad  
\text{for all functions } 0 \le f_j \in \ell^{1/s_j}(G_j).
\end{equation}
\end{theorem}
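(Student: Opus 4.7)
The plan is induction on $d = \Rank{G}$. The base case $d = 0$ is immediate: $G = \{0\}$ forces the LHS to equal $\prod_j f_j(0)$, bounded termwise by $\|f_j\|_{1/s_j}$. For the inductive step, I first use monotonicity of $\|\cdot\|_{1/s}$ in $s$ (since $\ell^p$-norms on counting measure are nonincreasing in $p$) to reduce to Pareto-minimal $s \in \PP(\fg)$. Several easy subcases then peel off by reducing $m$ or passing to a subgroup of smaller rank: if $s_j = 0$, bound $f_j$ by $\|f_j\|_\infty$ and induct on $m$; if $\phi_j \equiv 0$, factor out $f_j(0) \le \|f_j\|_{1/s_j}$; if $s_j = 1$ and $\phi_j$ is nontrivial, use Fubini over the fibers of $\phi_j$ to reduce to an HBL sum on $\Kernel{\phi_j}$, which has rank $d - \Rank{\phi_j(G)} < d$.

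The main step is a quotient construction applied whenever some proper nontrivial $H \subim G$ saturates the defining inequality of $\PP$, i.e.\ $\Rank{H} = \sum_j s_j \Rank{\phi_j(H)}$. Passing to the saturation of $H$ in $G$, we may assume $G/H$ is torsion-free and pick a complement $G \cong H \oplus G''$. Splitting $x = h + \tilde{x}''$ and applying the inductive hypothesis to the inner sum over $h \in H$ (of rank $< d$, with $s \in \PP$ of the restricted datum since every subgroup of $H$ is also a subgroup of $G$), we obtain
\[
    \sum_{h \in H}\prod_j f_j(\phi_j(h)+\phi_j(\tilde{x}'')) \le \prod_j \bar{f}_j(\bar\phi_j(x''))
\]
where $\bar{f}_j(c) := (\sum_{y\in c} f_j(y)^{1/s_j})^{s_j}$ is the coset-sum on $\bar G_j := G_j/\phi_j(H)$ and $\bar\phi_j : G'' \to \bar G_j$ is the induced homomorphism. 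The outer sum over $x''$ is then an HBL sum for the quotient datum $(G'', (\bar G_j), (\bar\phi_j))$ at the same exponents $s$. Verifying $s \in \PP$ of the quotient datum is where the saturation at $H$ enters crucially: for $K \subim G''$ with preimage $\tilde K \subim G$, subtracting the saturation equality from $\Rank{\tilde K} \le \sum_j s_j \Rank{\phi_j(\tilde K)}$ yields $\Rank{K} = \Rank{\tilde K} - \Rank{H} \le \sum_j s_j(\Rank{\phi_j(\tilde K)} - \Rank{\phi_j(H)}) = \sum_j s_j \Rank{\bar\phi_j(K)}$. Applying the inductive hypothesis to this rank-$(d - \Rank{H})$ datum, and noting that $\|\bar f_j\|_{1/s_j} = \|f_j\|_{1/s_j}$ by coset-wise Fubini, closes this case.

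The hard step is the residual irreducible case: $s$ is Pareto-minimal, $s_j \in (0,1)$ for all $j$, every $\phi_j$ is nontrivial, and no proper nontrivial $H$ saturates the defining inequality. Pareto-minimality then forces the $H = G$ constraint $\Rank{G} = \sum_j s_j \Rank{\phi_j(G)}$ to saturate, yet the quotient construction is unavailable. I expect to close this case by a direct global Hölder inequality on $G$ with exponents $1/s_j$, supplemented by discrete Young-type estimates (which have sharp constant $1$ on counting measure) whenever some $\phi_j$ has nontrivial kernel. The key point is that the arithmetic from the $H = G$ saturation is exactly what is needed for iterated Hölder and Young to combine into a constant of $1$, once contributions from the $\Kernel{\phi_j}$ directions are correctly accounted for; making this work in full generality, rather than just in Loomis--Whitney--like special cases, is the technical crux.
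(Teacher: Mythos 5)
Your structural skeleton parallels the paper's proof more closely than you may realize: the quotient construction at a critical subgroup, the $\bar f_j$ coset-sum with $\|\bar f_j\|_{1/s_j}=\|f_j\|_{1/s_j}$, the verification that the restricted and quotient data inherit $s\in\PP$ (which is exactly Lemma~\ref{lemma:factorization}), and the Fubini/fiber argument when $s_j\in\{0,1\}$ all appear in the paper (Lemmas~\ref{lemma:criticalW}, \ref{lemma:exponentsallone}). Your use of saturations and direct sums over $\ZZ$ is a legitimate alternative to the paper's strategy of passing to $\QQ$--vector spaces to sidestep torsion in quotients, and that portion of your proposal is sound.

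The genuine gap is the residual "hard case," which you leave as a sketch. Your reduction via norm-monotonicity takes you only to Pareto-minimal points of $\PP$, and this is too weak: the face of $\PP$ where $G$ is critical but no proper nontrivial subgroup is, consists \emph{entirely} of Pareto-minimal points (any coordinatewise decrease would break the $H=G$ constraint), so your hard case is robustly nonempty and the proposed "global H\"older plus discrete Young" argument would need to be carried out in full, which is precisely what is not obvious. The paper avoids this case altogether by a different reduction: the multilinear Riesz--Thorin interpolation theorem (Proposition~\ref{prop:rieszthorin}) reduces the claim to the \emph{extreme points} of $\PP$ (which form a strictly smaller set than the Pareto-minimal points), and Proposition~\ref{prop:0or1} then shows that at an extreme point either some proper nontrivial subspace is critical or $s\in\{0,1\}^m$; the two-coordinate perturbation in Lemma~\ref{lemma:1} (replace $s_k,s_l$ by $s_k\pm\eps\dim\phi_l(V)$, $s_l\mp\eps\dim\phi_k(V)$) moves within the $H=G$ hyperplane and hence contradicts extremality, not Pareto-minimality. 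To close your proof you should either insert the interpolation step to reduce to extreme points and then prove the analogue of Proposition~\ref{prop:0or1}, or else supply a complete argument for the hard case---the "iterated H\"older and Young" plan is not a proof as written.
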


In combination with results previously known, this implies a more comprehensive statement.
\begin{theorem} \label{thm:listoffive}
The following conditions are mutually equivalent, for any Abelian group HBL datum 
$\fg=\tup{G,\tup{G_j},\tup{\phi_j}}$
and any $s\in[0,1]^m$.
\begin{enumerate}
\item $s\in \PP(\fg)$.
\item There exists a constant $C<\infty$ for which the H\"older-Brascamp-Lieb inequality \eqref{BL-C} holds. 
\item There exists $A<\infty$ such that
\begin{equation} \label{BL;sets} |E| \le A\prod_{j=1}^m |\phi_j(E)|^{s_j} \qquad\text{for all nonempty finite sets } E \subseteq G.  \end{equation}
\item
The H\"older-Brascamp-Lieb inequality \eqref{BL-C} holds with $C=1$. 
\item Inequality \eqref{BL;sets} holds with $A=1$.
\end{enumerate}
\end{theorem}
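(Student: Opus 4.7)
The plan is to establish a cyclic chain of implications, leveraging Theorem~\ref{thm:2new} as the one genuinely new input; the remaining links are either trivial or already available from \cite{BCCT10}. Explicitly, I intend to prove the sequence $(1) \Rightarrow (4) \Rightarrow (2) \Rightarrow (3) \Rightarrow (1)$, and then deduce $(4) \Rightarrow (5) \Rightarrow (3)$ to close off the two remaining conditions.

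The implication $(1) \Rightarrow (4)$ is exactly the content of Theorem~\ref{thm:2new}, so there is nothing to do. The implication $(4) \Rightarrow (2)$ is immediate with $C=1$. For $(2) \Rightarrow (3)$, given a finite nonempty $E \subseteq G$, I would apply \eqref{BL-C} to the indicator functions $f_j = \indicator_{\phi_j(E)}$: then $\prod_j f_j(\phi_j(x)) = 1$ for every $x \in E$, so the left side is at least $|E|$, while $\norm{f_j}_{1/s_j} = |\phi_j(E)|^{s_j}$, yielding \eqref{BL;sets} with the same $A=C$. The implication $(4) \Rightarrow (5)$ uses the same indicator-function substitution but with $C=1$. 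And $(5) \Rightarrow (3)$ is trivial.

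The only substantive step that is not immediate is $(3) \Rightarrow (1)$. I would not prove this from scratch, since it is established (in greater generality, without torsion-freeness) in \cite{BCCT10}; the argument there proceeds by choosing, for each finitely generated subgroup $H \subim G$, a F\o{}lner-type sequence of finite sets $E_n \subseteq H$ (for instance, images of boxes $[-n,n]^{\Rank H}$ under a basis isomorphism, combined with a torsion part), for which $|E_n|$ grows like $n^{\Rank H}$ and $|\phi_j(E_n)|$ grows like $n^{\Rank{\phi_j(H)}}$. Substituting into \eqref{BL;sets}, taking logarithms, dividing by $\log n$, and letting $n \to \infty$ recovers the rank inequality \eqref{subcriticalhypothesis2}, which characterizes membership in $\PP(\fg)$.

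The main obstacle, conceptually, is entirely contained in $(1) \Rightarrow (4)$, i.e.\ Theorem~\ref{thm:2new}: reducing the admissible constant from ``some $C < \infty$'' to $C=1$ is what is genuinely new here. Once that theorem is granted, assembling the five-way equivalence is essentially a matter of applying the inequality to indicator functions in one direction, and running the F\o{}lner/rank argument of \cite{BCCT10} in the other.
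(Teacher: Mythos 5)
Your proposal is correct and follows essentially the same route as the paper: the equivalence of (1), (2), (3) is imported from \cite{BCCT10}, Theorem~\ref{thm:2new} supplies $(1)\Rightarrow(4)$, and the remaining implications are trivial or follow by testing on indicator functions. The paper simply cites \cite{BCCT10} for $(1)\Leftrightarrow(2)\Leftrightarrow(3)$ wholesale rather than unfolding $(2)\Rightarrow(3)$ explicitly as you do, but the logical structure is identical.
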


Indeed, the first three of these conditions are proved to be equivalent in \cite{BCCT10}, 
in the more general setting where $G$ is an arbitrary finitely generated Abelian group,
possibly with torsion.
In combination with this equivalence, Theorem~\ref{thm:2new} 
implies the mutual equivalence of all five conditions when $G$ is torsion-free.
\qed

If $G$ has torsion then
the optimal constant is no longer equal to $1$. Its value is determined for all finite
Abelian group HBL data in \cite{christfiniteG2013}. In combination with Theorem~\ref{thm:2new}, this allows determination
of the optimal constant for all finitely generated Abelian group HBL data.

That \eqref{BL} cannot hold for any $C<1$ may be seen by fixing any point $x_0\in G$
and defining $f_j$ to be the indicator function of the singleton set $\{\phi_j(x_0)\}$.
%
%
%

Our second main theorem establishes an algorithm for the computation of 
$\PP(\fg)$. To have such an algorithm is desirable 
for applications \cite{CDKSY14B} to computer science.
While 
$\PP(\fg)$
is defined in terms of a finite list of inequalities, its definition refers to each of the 
subgroups of $G$ in order to specify this list.
Therefore the definition does not directly provide an effective way to calculate
$\PP(\fg)$, unless $G$ has rank $0$ or $1$.

\begin{theorem} \label{thm:decision}
There exists an algorithm that takes as input any Abelian group HBL datum 
$\fg=\tup{G,\tup{G_j},\tup{\phi_j}}$
and returns as output both a list of finitely many linear inequalities over $\ZZ$ that jointly 
specify the associated polytope $\PP(\fg)$, and a list of all extreme points 
of $\PP(\fg)$.
\end{theorem}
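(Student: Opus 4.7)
The plan is to enumerate, for each of the fewer than $(\Rank{G}+1)^{m+1}$ candidate tuples $\tup{r, r_1, \ldots, r_m} \in \set{1, \ldots, \Rank{G}} \times \prod_j \set{0, \ldots, \Rank{\phi_j(G)}}$, whether it is realized as the rank-profile of some subgroup of $G$. As the paragraph preceding the theorem notes, the resulting inequalities $\sum_j s_j r_j \ge r$, together with $0 \le s_j \le 1$, form a complete H-description of $\PP(\fg)$; a standard vertex-enumeration procedure (e.g., double description, Avis--Fukuda) then produces the extreme points. So the core work is an effective decision procedure for realizability of a single tuple.

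The first move is to pass to linear algebra over $\QQ$. Writing $G\cong\ZZ^d$ and $V := G\otimes\QQ\cong\QQ^d$, each $\phi_j$ extends to a $\QQ$-linear map $\tilde\phi_j\from V\to G_j\otimes\QQ$ of the same rank, with kernel $K_j := \ker\tilde\phi_j \subseteq V$. For any subgroup $H \subim G$ the $\QQ$-subspace $W := H\otimes\QQ$ satisfies $\dim W = \Rank{H}$ and $\dim\tilde\phi_j(W) = \Rank{\phi_j(H)}$; conversely any $\QQ$-subspace $W\subseteq V$ is realized by $W\cap\ZZ^d$ with the same profile. Hence the tuple is realizable iff there is a $\QQ$-subspace $W\subseteq V$ with $\dim W = r$ and $\dim(W\cap K_j) = r - r_j$ for each $j$.

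To decide this, I would construct $W$ greedily, one basis vector at a time. At step $k<r$, having built $W_k \subseteq V$ of dimension $k$, I adjoin $v \in V\setminus W_k$ to form $W_{k+1} = W_k + \QQ v$; for each $j$, $\dim(W_{k+1}\cap K_j)$ is either $\dim(W_k\cap K_j)+1$ or $\dim(W_k\cap K_j)$ according as $v \in W_k + K_j$ or not. For a prescribed increment pattern $\tup{\epsilon_1, \ldots, \epsilon_m} \in \set{0,1}^m$, such a $v$ exists iff the $\QQ$-subspace $U := \bigcap_{j : \epsilon_j = 1}(W_k + K_j)$ is contained in neither $W_k$ nor any $W_k + K_j$ with $\epsilon_j = 0$ --- a condition expressible as a dimension comparison among explicit $\QQ$-subspaces of $V$, using that a vector space over the infinite field $\QQ$ is never a finite union of proper subspaces. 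Deciding realizability of the target tuple then reduces to a finite search over length-$r$ sequences of increment patterns whose coordinatewise sums equal $\tup{r - r_1, \ldots, r - r_m}$, checking one-step feasibility at each stage; a successful sequence produces an explicit witness $W$, and conversely the increment pattern at each step of any $W$'s construction yields such a sequence. The hardest part is organizing this bookkeeping --- propagating the current subspace $W_k$ and its relative position with respect to the $K_j$'s forward correctly so that every subsequent feasibility check is accurate --- but the required operations remain elementary finite-dimensional $\QQ$-linear algebra. Iterating the decision procedure over every candidate tuple and feeding the resulting H-description to vertex enumeration completes the algorithm.
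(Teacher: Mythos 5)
Your plan---decide for each tuple $(r,r_1,\ldots,r_m)$ whether it is realized by some subgroup---is exactly the thing the paper shows you should \emph{not} expect to carry out (short of resolving a famous open problem). Theorem~\ref{thm:hilbert} proves that an algorithm which, for arbitrary input data, correctly lists the realizable tuples is equivalent to Hilbert's Tenth Problem over $\QQ$. The paragraph immediately following Theorem~\ref{thm:decision} in the paper warns against precisely your strategy: the algorithm of Theorem~\ref{thm:decision} \emph{does not} decide realizability tuple by tuple. So a correct proof cannot take your route, and the gap in yours is at the greedy step.

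Concretely, the one-step feasibility test at stage $k$ (``does a valid $v$ exist, given the current $W_k$?'') is sound but does not compose. Among the uncountably many admissible $v$ at stage $k$, different choices lead to different $W_{k+1}$, and whether the flag can be completed depends on that choice, not merely on the increment-pattern prefix. Your algorithm, as described, must commit to a particular $v$ at each stage and can therefore report a sequence infeasible even though a different earlier choice would have succeeded; it is not enough that $U=\bigcap_{\epsilon_j=1}(W_k+K_j)$ escape the finite union of bad subspaces, because membership in $U$ is itself a closed (polynomial-equation) condition on $v$, and deciding whether the resulting constructible set of flags has a rational point is again an instance of Hilbert's Tenth Problem over $\QQ$. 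There is no evident repair, because quantifying over all choices of $v$ is the hard part. The paper sidesteps this entirely: its algorithm (\S\ref{sec:computeP}) never decides realizability. It enumerates explicit subspaces $W_1,W_2,\ldots$ (Lemma~\ref{lemma:enumeratesubspaces}), forms the outer approximation $\PP_N\supseteq\PP$ from the first $N$ of them, lists the extreme points of $\PP_N$ (Lemma~\ref{lemma:enumeratevertices}), and tests each extreme point for membership in $\PP$ by recursion on dimension and on $m$ via critical subspaces and the factorization $\fv\mapsto(\fv_W,\fv_{V/W})$ (Proposition~\ref{prop:subalg}, Lemmas~\ref{lemma:factorization} and~\ref{lemma:cleanup}). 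Since $\PP$ is cut out by finitely many of the candidate inequalities, Lemma~\ref{lemma:musthalt} guarantees $\PP_N=\PP$ for some $N$, and the loop halts with a certified H-description together with the extreme points. Realizable tuples it never examines correspond to redundant inequalities, which need not be produced.
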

This algorithm is specified, and proved to be correct, in \S\ref{sec:computeP}.

The polytope $\PP(\fg)$ is specified in Definition~\ref{def:P} by finitely many inequalities,
all of which lie in a finite set that can be read off directly from the datum;
for any subgroup $H\le G$, 
$(r,r_1,\dots,r_m) = (\Rank{H},\Rank{\phi_1(H)},\dots,\Rank{\phi_m(H)})$ is
an element of $\{0,1,\dots,\Rank{G}\}^{m+1}$.
But only some tuples $(r,r_1,\dots,r_m) \in \{0,1,\dots,\Rank{G}\}^{m+1}$
are actually realized in this way by some subgroup.
An algorithm that determines for each $(r,r_1,\dots,r_m)$ whether there exists a subgroup $H\subim G$
satisfying $\Rank{H}=r$ and $\Rank{\phi_j(H)}=r_j$ for each $j\in\{1,2,\dots,m\}$, would 
accomplish the first task of the algorithm of Theorem~\ref{thm:decision}. The algorithm 
of Theorem~\ref{thm:decision} does not do this. Instead, it produces a finite sublist of 
certain $(H,r,r_1,\dots,r_m)$
satisfying these conditions, along with a proof that the polytope specified 
by all the inequalities \eqref{subcriticalhypothesis2}
specified by this sublist coincides with $\PP$. Any inequalities \eqref{subcriticalhypothesis2} 
missing from the computed sublist are guaranteed to be redundant for the definition of  $\PP(\fg)$.

We have not succeeded in devising an algorithm that computes the entire list of constraints \eqref{subcriticalhypothesis2}
in the definition of $\PP(\fg)$.
Our third main theorem asserts that the existence of such an algorithm is equivalent to an affirmative answer
to a longstanding open question, Hilbert's Tenth Problem over the rationals.

\begin{theorem} \label{thm:hilbert} 
There exists an effective algorithm for computing the set of constraints \eqref{subcriticalhypothesis2} defining $\PP(\fg)$ for any arbitrary Abelian group HBL datum $\fg$, 
if and only if there exists an effective algorithm to decide whether any given system of polynomial equations with 
rational coefficients has a rational solution. 
\end{theorem}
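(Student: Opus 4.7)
The plan is to establish both implications of Theorem~\ref{thm:hilbert} by exploiting the linear-algebraic nature of the rank-profile realizability question. Throughout we may assume $G=\ZZ^n$ and each $G_j=\ZZ^{d_j}$, so each $\phi_j$ is represented by an integer matrix $A_j$, and every finitely generated subgroup $H\subim G$ of rank $r$ is the $\ZZ$-span of the columns of some $V\in\ZZ^{n\times r}$ of column rank $r$; then $\Rank{\phi_j(H)}$ equals the ordinary rank of $A_jV$. Since rank is invariant under rational scaling, existence of a subgroup with rank profile $(r,r_1,\ldots,r_m)$ is equivalent to existence of $V\in\QQ^{n\times r}$ of rank $r$ with $\Rank{A_jV}=r_j$ for all $j$. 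Computing the full list of constraints~\eqref{subcriticalhypothesis2} defining $\PP(\fg)$ is thus tantamount to enumerating all rank profiles realized by some such $V$.

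For the ``$\Leftarrow$'' direction, suppose we are given an oracle for Hilbert's Tenth Problem over $\QQ$. For each candidate tuple $\tau=(r,r_1,\ldots,r_m)$ in the finite set $\{1,\ldots,\Rank{G}\}\times\prod_j\{0,\ldots,\Rank{\phi_j(G)}\}$, realizability of $\tau$ is expressible as a finite Boolean combination of polynomial equalities and inequations over $\ZZ$ in the entries of $V$: some $r\times r$ minor of $V$ is nonzero, and for each $j$, some $r_j\times r_j$ minor of $A_jV$ is nonzero while all $(r_j+1)\times(r_j+1)$ minors vanish. Replacing each inequation ``$q\ne0$'' by the equation ``$qt-1=0$'' in a fresh variable $t$ and distributing over the disjunctions arising from ``some minor is nonzero,'' we reduce realizability of $\tau$ to the solvability over $\QQ$ of finitely many pure polynomial systems, each decidable by the oracle. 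Looping over the finitely many tuples produces the required algorithm.

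For the ``$\Rightarrow$'' direction we reduce Hilbert's Tenth Problem over $\QQ$ to the constraint-enumeration task. Given $p\in\ZZ[x_1,\ldots,x_n]$, we will construct, effectively in $p$, an Abelian group HBL datum $\fg_p$ and a tuple $\tau_p$ such that $\tau_p$ is realized by a subgroup of the ambient $G=\ZZ^N$ in $\fg_p$ if and only if $p$ has a rational zero. Two ingredients enter. First, via a straight-line program for $p$ one reduces $p=0$ to a conjunction of elementary relations, each of the form $z_k=z_iz_j$, $z_k=z_i+z_j$, or $z_k=c$ for a rational constant $c$. Second, we take $H\le\ZZ^N$ of rank $r=2$ and normalize a basis so that the first two rows of the $N\times 2$ basis matrix are $\binom{1}{0}$ and $\binom{0}{1}$---achievable whenever the projection onto the first two coordinates is surjective on $H$---so that the remaining rows contribute free rational parameters that we identify with the variables of the straight-line program. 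Each elementary relation then becomes the vanishing of a single $2\times 2$ determinant inside $A_jV$, where $A_j$ is an integer matrix whose rows take integer combinations of the normalized pivot rows and the free rows in order to produce the constant terms and sums required; by arranging that one row of each such $2\times 2$ submatrix contains the pivot entry $1$, the rank of $A_jV$ is automatically $\ge 1$, and rank $=1$ is equivalent to the vanishing of the determinant. Taking $\tau_p=(2,2,1,\ldots,1)$, realizability of $\tau_p$ is equivalent to solvability of the elementary system, i.e.\ to $p$ having a rational zero, and the constraint-enumeration algorithm applied to $\fg_p$ answers this question.

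The main obstacle lies in the ``$\Rightarrow$'' direction: ensuring that the encoding is faithful in both directions. Every rational solution of $p=0$ must yield a genuine subgroup realizing $\tau_p$, which requires in particular that the basis normalization is always achievable; this is arranged by demanding, as part of $\tau_p$, that the projection onto the first two coordinates be surjective on $H$. Conversely, no parasitic subgroup may realize $\tau_p$ without coming from a rational solution; this follows from the automatic rank-$\ge 1$ property of the elementary submatrices together with the uniqueness of the normalized basis once the pivot rows are fixed. Verifying that the construction can be carried out uniformly and effectively in $p$, and that the prescribed rank pattern is not accidentally realized by unrelated subgroups of $\ZZ^N$, is the most delicate bookkeeping step.
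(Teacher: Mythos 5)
Your ``$\Leftarrow$'' direction coincides with the paper's: realize rank conditions as systems of minor equations (Remark~\ref{rmk:minorpolynomials} in the paper), use the reduction of the Abelian-group problem to the $\QQ$-vector-space problem (Lemma~\ref{rationalgroup}), and query the Hilbert's-Tenth-Problem oracle for each of the finitely many candidate tuples. The conversion of inequations to equations via $qt-1=0$ is exactly Remark~\ref{rmk:5.1}, except the paper prefers summing squares of minors to avoid a case split over disjunctions; both are valid over $\QQ$.

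In the ``$\Rightarrow$'' direction your overall plan also matches the paper's --- linearize the given polynomial system via intermediate variables, pick a rank-$2$ subspace, read off the parameter vector from a normalized basis, and encode each elementary relation as a rank-$1$ condition on a $2\times2$ image whose determinant equals the defect. The paper's linearization via monomials (Lemma~\ref{lem:basicset}, yielding polynomials of the form $u_\alpha u_\beta - u_\gamma$) is interchangeable with your straight-line-program reduction. However, there is a genuine gap in the core of your encoding: you place the pivots in the first two rows, making the basis matrix $\begin{pmatrix} I_2 \\ B\end{pmatrix}$ with $B\in\QQ^{(N-2)\times 2}$, and assert that the remaining rows ``contribute free rational parameters.'' But then every integer-linear-functional coordinate of a generic element $\alpha e_1 + \beta e_2 + \cdots$ mixes $\alpha$ and $\beta$ through the two columns of $B$ simultaneously, so the $2\times2$ matrices $A_j V$ you form have columns both supported on the same free parameters. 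Their determinants are therefore \emph{bilinear} combinations of the two columns of $B$ rather than the target quadratic $z_i z_j - z_k$ in a single set of variables, and the rank-$1$ condition will not express the intended multiplicative relation. The paper sidesteps this by constructing the rank-$2$ subspace with \emph{disjoint} column supports outside the pivots: concretely $V = \QQ(1,c_1,\ldots,c_q,0,\ldots,0) + \QQ(0,\ldots,0,1,c_1,\ldots,c_q) \subseteq \QQ^{2q+2}$, with additional rank conditions (the maps $f_1,\ldots,f_4$ and $f_{4+j}$ for $1\le j\le q$) forcing this shape and forcing the two copies of $(c_1,\ldots,c_q)$ to agree. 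Only then does a $2\times 2$ block $\begin{pmatrix} c_{i_1} & 1 \\ c_{i_3} & c_{i_2}\end{pmatrix}$ arise, with determinant $c_{i_1}c_{i_2}-c_{i_3}$ and automatic rank $\ge 1$. You could in principle recover this in your coordinates by imposing extra rank-$1$ constraints forcing one column of $B$ to vanish on one half of the rows and the other column to vanish on the other half, tied together by further constraints; but as written this is exactly the mechanism missing from your proposal, and the vague appeal to ``delicate bookkeeping'' does not supply it. Without it, both directions of the equivalence $\tau_p\text{ realized} \Leftrightarrow p=0\text{ solvable}$ fail: well-chosen subgroups will realize $\tau_p$ regardless of whether $p$ has a rational root, and conversely the determinant conditions will not certify $p=0$.
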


A simply computable alternative description of $\PP(\fg)$ is given by Carlen, Lieb, and Loss \cite{CLL04}
for the case in which each target group $G_j$ has rank equal to one.\footnote{ \cite{CLL04}
is concerned with continuum inequalities \eqref{eq:HBLclassical}, but the description of $\PP(\fg)$ obtained
carries over to the Abelian group context.} 
Theorem~\ref{thm:hilbert} makes it clear that the theory is more complex in the general situation
of arbitrary rank.

In a subsequent paper we will refine the results obtained here.
Consider any Abelian group HBL datum 
$\fg=\tup{G,\tup{G_j},\tup{\phi_j}}$.  
Define
$\LL(\fg)$
to be the smallest collection of subgroups
of $G$ that contains $\Kernel{\phi_j}$ for all indices $j$
and is closed with respect to intersections and formation of sums.
Define
the polytope 
$\PP_* = \PP_*(\fg)$
to be the set of all $s\in[0,1]^m$ that satisfy 
\eqref{subcriticalhypothesis2}
for every subgroup $H\in \LL$.
Then
$\PP_*(\fg) =\PP(\fg)$.
Moreover, whereas the algorithm promised by Theorem~\ref{thm:decision} 
and specified in its proof below requires searching a list of all subgroups of $G$ until halting,
it suffices to search only a list of subspaces belonging to  $\LL$.
These can be generated iteratively from the subspaces $\Kernel{\phi_j}$
by repeated formation of intersections and sums.

\section{Upper bounds} \label{sec:hbl}
In this section we prove Theorem~\ref{thm:2new}.
This entails the development of some auxiliary material. 
We have found it more convenient to work with vector spaces $G\otimes\rationals$ than with 
general torsion-free Abelian groups. Therefore we formulate a version of the HBL inequalities in terms
of vector spaces, and reduce Theorem~\ref{thm:2new} to its analogue in the vector space context.

\begin{notation}
Let $X$ be any set.
For $p\in[1,\infty)$, $\ell^p(X)$ denotes the space of all $p$--power summable functions $f\from X\to\CC$, equipped with the norm 
\[ \norm{f}_p = \lt(\sum_{x\in X} |f(x)|^p \rt)^{1/p}.\]
$\ell^\infty(X)$ is the space of all bounded functions $f\from X\to\CC$, equipped with the norm $\norm{f}_\infty = \sup_{x\in X} |f(x)|$.
\end{notation}

These definitions apply even when $X$ is uncountable: If $p\in[1,\infty)$, any function in $\ell^p(X)$ vanishes at all 
but countably many points in $X$; and the $\ell^\infty$--norm is still the supremum of $|f|$. 
In the discussion below, $X$ will always be either a finitely generated Abelian group, or a finite-dimensional vector space over a field $\FF$.
If $s_j=0$ then $1/s_j$ is interpreted as  $\infty$.

\subsection{Preliminaries}
%
%

The following result demonstrates there is no loss of generality in restricting attention to exponent tuples
$s\in[0,1]^m$, rather than $s\in[0,\infty)^m$, in Theorem~\ref{thm:2new}. Its proof is simple, but is deferred to \S\ref{sec:P}
where it is discussed in conjunction with a parallel result in which Abelian groups are replaced by vector spaces.
\begin{proposition} \label{prop:maxexponent}
Let $s\in[0,\infty)^m$.
Define $t\in[0,1]^m$ by $t_j = \min(s_j,1)$ for each index $j$.
If \eqref{subcriticalhypothesis2} or \eqref{BL} holds for $s$ then it holds for $t$.
\end{proposition}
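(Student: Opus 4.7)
The plan is to treat the two implications independently and, in each case, to reduce one coordinate at a time: fix an index $j_0$ with $s_{j_0}>1$, show that the inequality continues to hold after replacing $s_{j_0}$ by $1$ (call this new exponent vector $s'$), and then iterate over the finitely many indices where $s_j>1$. Because each reduction only lowers coordinates and leaves those already in $[0,1]$ alone, after finitely many steps the vector is exactly $t=\min(s,1)$. Thus throughout, fix $j_0$ with $s_{j_0}>1$.

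For the rank inequality \eqref{subcriticalhypothesis2}, the idea is to apply the hypothesis not to an arbitrary $H\subim G$ itself but to the smaller subgroup $H_0 \ceq H\cap \Kernel{\phi_{j_0}}$. Torsion-freeness of $G$ makes rank additive along the short exact sequence $0\to H_0\to H\to \phi_{j_0}(H)\to 0$, giving $\Rank{H_0}=\Rank{H}-\Rank{\phi_{j_0}(H)}$. Since $\phi_{j_0}(H_0)=0$ and $\phi_j(H_0)\subseteq\phi_j(H)$ for every $j$, the $s$-inequality applied to $H_0$ rearranges to $\Rank{H}\le\Rank{\phi_{j_0}(H)}+\sum_{j\ne j_0} s_j\Rank{\phi_j(H)}$, which is precisely the $s'$-inequality for $H$.

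For the functional inequality \eqref{BL}, I would exploit that its left side is linear in each $f_j$. Write $\sum_{x\in G}\prod_j f_j(\phi_j(x)) = \sum_{y\in G_{j_0}} f_{j_0}(y)\,W(y)$ with $W(y) \ceq \sum_{x\in\phi_{j_0}^{-1}(y)}\prod_{j\ne j_0} f_j(\phi_j(x))$. The pointwise bound $W(y)\le\prod_{j\ne j_0}\|f_j\|_{1/s_j}$ then comes from applying \eqref{BL} for $s$ to the modified tuple in which $f_{j_0}$ has been replaced by the singleton indicator $\indicator_{\{y\}}$: that indicator has $\ell^{1/s_{j_0}}$-norm equal to $1$. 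Summing against $f_{j_0}(y)$ yields the $s'$-inequality $\sum_x\prod_j f_j(\phi_j(x))\le\|f_{j_0}\|_1\prod_{j\ne j_0}\|f_j\|_{1/s_j}$.

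I do not foresee a serious obstacle. Both arguments are elementary once one has chosen the right auxiliary object—$H\cap\Kernel{\phi_{j_0}}$ for the rank version and $\indicator_{\{y\}}$ for the functional version—and the only place where a specific hypothesis of the proposition is genuinely used is the additivity of rank, which requires $G$ to be torsion-free. The iteration step is formal in each case, since after one reduction the inequality we have in hand is of exactly the same form (just with $s_{j_0}$ replaced by $1$), so the same argument applies verbatim to any remaining index $j_1$ with $s_{j_1}>1$.
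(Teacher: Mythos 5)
Your proof is correct, and the rank half coincides with the paper's argument up to rephrasing: applying the hypothesis to $H_0 = H\cap\Kernel{\phi_{j_0}}$ and using $\Rank{H} = \Rank{H_0} + \Rank{\phi_{j_0}(H)}$ is precisely the paper's decomposition $W = W'+U$ with $W'=W\cap\Kernel{\phi_k}$ and $U$ a supplement (the paper works in the vector-space setting and transfers to groups via Lemma~\ref{lem:PZequalsPQ}, but the computation is the same). One small correction on your commentary: rank additivity along short exact sequences holds for \emph{all} finitely generated Abelian groups, torsion or not, because $\QQ$ is flat over $\ZZ$; torsion-freeness is not the enabling hypothesis for that step. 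The functional half of your proof is genuinely more direct than the paper's. The paper first descends from \eqref{BL} to the set inequality \eqref{BL;sets}, runs a fiber argument on finite sets $E_y=\phi_k^{-1}(y)\cap E$ (using $|\phi_k(E_y)|=1$), and then must invoke the later Theorem~\ref{thm:listoffive} to climb back from \eqref{BL;sets} to \eqref{BL}. You instead write the sum as $\sum_{y}f_{j_0}(y)\,W(y)$ and bound each $W(y)$ by substituting $\indicator_{\{y\}}$ into the $s$-inequality, exploiting $\norm{\indicator_{\{y\}}}_{1/s_{j_0}}=1$; summing against $f_{j_0}$ produces the $\norm{f_{j_0}}_1$ factor. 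This proves \eqref{BL} for $t$ in one pass and avoids the forward reference to Theorem~\ref{thm:listoffive} --- in effect, it is the functional form of the paper's set-fiber argument, obtained by replacing the fiber of $E$ over $y$ with the weight $f_{j_0}(y)$.
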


If $X$ is a measure space equipped with counting measure then whenever $0<\alpha\le\beta\le\infty$, 
$\ell^\alpha(X)\subseteq \ell^\beta(X)$, and the inclusion mapping is a contraction. Therefore when $s,t$ are related as above,
inequality \eqref{BL} for the exponent tuple $t$ subsumes the inequality for $s$.

\begin{proof}[Proof of necessity of the hypothesis \eqref{subcriticalhypothesis2}
for validity of the inequality \eqref{BL}]
This necessity is proved in \cite[Theorem~2.4]{BCCT10}; the simple proof is included here for the sake of completeness.
Indeed, consider any subgroup $H \subim G$. 
Let $r=\Rank{H}$. 
By definition of rank, there exists a set $\set{e_i}_{i=1}^r$ of elements of $H$ such that for any coefficients $m_i,n_i\in\ZZ$, $\sum_{i=1}^r m_i e_i = \sum_{i=1}^r n_ie_i$ only if $m_i=n_i$ for all $i$.
For any positive integer $N$ define $E_N$ to be the set of all elements of the form $\sum_{i=1}^r n_i e_i$, where each $n_i$ ranges freely over $\set{1,2,\ldots,N}$. 
Then $|E_N| = N^r$.

On the other hand, for $j\in\set{1,\ldots,m}$,
\begin{equation}\label{necessitycount} 
|\phi_j(E_N)|\le A_j N^{\Rank{\phi_j(H)}}, 
\end{equation}
where $A_j$ is a finite constant which depends on $\phi_j$, on the structure of $H_j$, and on the choice of $\set{e_i}$, but not on $N$.
Indeed, it follows from the definition of rank that for each $j$ it is possible to permute the indices $i$ so that for each $i>\Rank{\phi_j(H)}$ there exist integers $k_i$ and $\kappa_{i,l}$ such that 
\[
k_i \phi_j(e_i)  = \sum_{l=1}^{\Rank{\phi_j(H)}} \kappa_{i,l} \phi_j(e_l).
\]
The upper bound \eqref{necessitycount} follows from these relations.

Consider any large positive integer $N$. 
Define $f_j$ to be the indicator function of $\phi_j(E_N)$.
Then $\prod_j f_j\circ\phi_j\equiv 1$ on $E_N$. Therefore
inequality \eqref{BL}, applied to these functions $f_j$, asserts that 
\[ N^{\Rank{H}} \le \prod_{j=1}^m A_j^{s_j} N^{s_j\Rank{\phi_j(H)}} = AN^{\sum_{j=1}^m s_j\Rank{\phi_j(H)}} \]
where $A<\infty$ is independent of $N$.
By letting $N$ tend to infinity, we conclude that $\Rank{H} \le {\sum_{j=1}^m s_j\Rank{\phi_j(H)}}$, as was to be shown.
\end{proof}

We show that Theorem~\ref{thm:2new} is a consequence of the special case in which all of the groups $G_j$ are torsion-free. 

\begin{proof}[Reduction of Theorem~\ref{thm:2new} to the case of torsion-free codomains]
Let 
$\fg=\tup{G,\tup{G_j},\tup{\phi_j}}$ be an Abelian group HBL datum
and let $s\in\PP(\fg)$.
%
According to the structure theorem of finitely generated Abelian groups, each $G_j$ is isomorphic to $\tilde G_j \oplus T_j$ where $T_j$ is a finite group and $\tilde G_j$ is torsion-free.
Here $\oplus$ denotes the direct sum of Abelian groups; $G'\oplus G''$ is the Cartesian product $G'\times G''$ equipped with the natural componentwise group law.
Define $\pi_j \from \tilde G_j\oplus T_j\to\tilde G_j$ to be the natural projection; thus $\pi_j(x,t)=x$ for $\tup{x,t} \in \tilde G_j \times T_j$.
Define $\tilde \phi_j = \pi_j\circ\phi_j \from G \to \tilde G_j$.

If $K$ is a subgroup of $G_j$, then $\Rank{\pi_j(K)} = \Rank{K}$ since the kernel $T_j$ of $\pi_j$ is a finite group.
Therefore for any subgroup $H\subim G$, $\Rank{\tilde \phi_j(H)}=\Rank{\phi_j(H)}$.  
Therefore whenever 
$\tup{G,\tup{G_j},\tup{\phi_j}}$ 
and $s$ together satisfy the hypothesis \eqref{subcriticalhypothesis2},
so do $\tup{G,\tup{\tilde G_j},\tup{\tilde \phi_j}}$ and $s$.

Under this hypothesis, consider any $m$--tuple $f=\tup{f_j}$ of nonnegative functions with $f_j\in\ell^{1/s_j}(G_j)$, 
and for each $j$, define $\tilde f_j\in \ell^{1/s_j}(\tilde G_j)$ by
\[ \tilde f_j(y) = \max_{t\in T_j} f_j(y,t), \text{ for } y\in \tilde G_j.  \]
For any $x\in G$, $f_j(\phi_j(x))\le \tilde f_j(\tilde \phi_j(x))$.
Consequently $\prod_{j=1}^m f_j(\phi_j(x))\le\prod_{j=1}^m \tilde f_j(\tilde \phi_j(x))$.

We are assuming validity of Theorem~\ref{thm:2new} in the torsion-free case. 
Its conclusion asserts that 
\[ \sum_{x\in G} \prod_{j=1}^m \tilde f_j(\tilde \phi_j(x)) \le \prod_{j=1}^m \norm{\tilde f_j}_{1/s_j}. \]
For each $j$, $\norm{\tilde f_j}_{1/s_j} \le \norm{f_j}_{1/s_j}$. 
This is obvious when $s_j=0$. 
If $s_j \ne 0$ then
\begin{align*}
\norm{\tilde f_j}_{1/s_j}^{1/s_j}
=\sum_{y\in \tilde G_j} \tilde f_j(y)^{1/s_j}
= \sum_{y\in \tilde G_j} \max_{t\in T_j}f_j(y,t)^{1/s_j}
\le \sum_{y\in \tilde G_j} \sum_{t\in T_j}f_j(y,t)^{1/s_j}
= \norm{f_j}_{1/s_j}^{1/s_j}.
\end{align*}
Combining these inequalities gives the HBL inequality \eqref{BL}.
%
\end{proof}

\subsection{Vector space HBL inequalities}
\label{sec:hbl-generalizations}

In this subsection we formulate HBL inequalities for vector spaces over arbitrary fields. 
This vector space framework is more convenient for our method of proof, which involves quotients; 
these are awkward in the Abelian group context since quotients of torsion-free groups need not be torsion-free,
while quotients of vector spaces suffer no such calamity.
We continue to endow all measure spaces with counting measure, even vector spaces over $\reals$ or $\complex$.
Theorem~\ref{thm:field}, below, is the analogue of Theorem~\ref{thm:2new} for vector spaces.
%
%
We will show how Theorem~\ref{thm:field} for $\field=\QQ$ implies Theorem~\ref{thm:2new}, and 
then prove Theorem~\ref{thm:field}.
The case of fields other than $\QQ$ is not the real thrust of our investigation, 
but since it is treated by our analysis with no extra effort we have formulated the
results for general fields.

All vector spaces considered in this paper are assumed to be finite-dimensional.
\begin{notation}
$\Dim{V}$ will denote the dimension of a vector space $V$ over a field $\FF$. 
The notation $W \subim V$ indicates that $W$ is a subspace of $V$, and $W \subpr V$ indicates that $W$ is a proper subspace.
\end{notation}
\begin{definition}
\label{def:vectorspacedatum}
A \emph{vector space HBL datum} is a $3$--tuple 
\[\fv= \dtup{V,\dtup{V_j},\dtup{\phi_j}} = \dtup{V,\dtup{V_1,\ldots,V_m},\dtup{\phi_1,\ldots,\phi_m}} \] 
where $V$ and $V_j$ are finite-dimensional vector spaces over a field $\FF$ 
and $\phi_j\from V \to V_j$ is an $\FF$--linear map. 
\end{definition}
It will always be understood that $j\in\set{1,2,\ldots,m}$, although the quantity $m$ will not usually be indicated.

To any Abelian group HBL datum $\tup{G,\tup{G_j},\tup{\phi_j}}$ with all $G_j$ (as well as $G$) torsion-free 
we associate a vector space HBL datum, as follows.
Firstly,
$G_j$ is isomorphic to $\ZZ^{d_j}$ where $d_j=\Rank{G_j}$. 
Likewise $G$ is isomorphic to $\ZZ^d$ where $d=\Rank{G}$.
Define homomorphisms $\tilde\phi_j\from\ZZ^d\to\ZZ^{d_j}$ by composing $\phi_j$ with these isomorphisms.
$\tup{G,\tup{G_j},\tup{\phi_j}}$ is thereby identified with $\tup{\ZZ^d,\tup{\ZZ^{d_j}},\tup{\tilde \phi_j}}$, 
Defining scalar multiplication in the natural way (i.e., treating $\ZZ^d$ and and $\ZZ^{d_j}$ as $\ZZ$--modules), 
represent each $\ZZ$--linear map $\tilde\phi_j$ by a matrix with integer entries.
Secondly, let $\FF=\QQ$.
Regard $\ZZ^d$ and $\ZZ^{d_j}$ as subsets of $\QQ^d$ and of $\QQ^{d_j}$, respectively, via the inclusion of $\ZZ$ into $\QQ$. 
Define the vector space HBL datum $\tup{V,\tup{V_j},\tup{\psi_j}}$ by setting
$V=\QQ^d$, $V_j = \QQ^{d_j}$, and letting $\psi_j\from\QQ^d \to\QQ^{d_j}$ 
be the $\QQ$--linear map represented by the same integer matrix as $\tilde \phi_j$.

This construction requires certain arbitrary choices, so 
$\tup{V,\tup{V_j},\tup{\psi_j}}$ is properly referred to as {\em an} associated vector space HBL datum.

%
%

%
\begin{theorem} \label{thm:field}
Let $\tup{V,\tup{V_j},\tup{\phi_j}}$ be a vector space HBL datum, and let $s\in[0,1]^m$.  If
\begin{equation} \label{subcriticalhypothesisfield} 
\Dim{W} \le \sum_{j=1}^m s_j\Dim{\phi_j(W)} \qquad \text{for all subspaces } W \subim V
\end{equation}
then
\begin{equation} \label{BLfield}
\sum_{x\in V} \prod_{j=1}^m f_j(\phi_j(x)) \le\prod_{j=1}^m \norm{f_j}_{1/s_j} \qquad \text{for all functions } 0 \le f_j \in \ell^{1/s_j}(V_j).
\end{equation}
In particular,
\begin{equation} \label{BLfield;sets} 
|E| \le \prod_{j=1}^m |\phi_j(E)|^{s_j} \qquad\text{for all nonempty finite sets } E \subseteq V.
\end{equation}
Conversely, if \eqref{BLfield;sets} holds for $s\in[0,1]^m$, and if $\FF=\QQ$ or if $\FF$ is finite, then $s$ satisfies \eqref{subcriticalhypothesisfield}.
\end{theorem}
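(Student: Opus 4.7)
My plan is to prove Theorem~\ref{thm:field} by induction on $\Dim{V}$. The base case $\Dim{V} = 0$ is immediate: the inequality reduces to $\prod_j f_j(0) \le \prod_j \|f_j\|_{1/s_j}$, which holds since pointwise values are dominated by every $\ell^p$ norm on counting measure.

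For the inductive step, I look for a proper nontrivial \emph{critical} subspace: $W \subim V$ with $0 < \Dim{W} < \Dim{V}$ achieving equality in (\ref{subcriticalhypothesisfield}). If such $W$ exists, I fiber $V$ via the quotient map $\pi \from V \to V/W$ and a chosen section $\sigma$. On each fiber I apply the inductive hypothesis to the sub-datum $(W, (\phi_j(W))_j, (\phi_j|_W)_j)$, whose subspace hypothesis is inherited directly from $V$'s. This bounds the inner sum by $\prod_j \bigl(\sum_{w \in \phi_j(W)} f_j(\phi_j(\sigma(\bar x)) + w)^{1/s_j}\bigr)^{s_j}$. I then define $F_j \from V_j/\phi_j(W) \to \RR_{\ge 0}$ by $F_j(\bar z) = \bigl(\sum_{w \in \phi_j(W)} f_j(z + w)^{1/s_j}\bigr)^{s_j}$ (well-defined by translation invariance of the inner sum) and apply the inductive hypothesis to the quotient datum $(V/W, (V_j/\phi_j(W))_j, (\bar\phi_j)_j)$. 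Its subspace hypothesis follows from $V$'s using the identities $\Dim{\pi^{-1}(U)} = \Dim{W} + \Dim{U}$ and $\Dim{\phi_j(\pi^{-1}(U))} = \Dim{\phi_j(W)} + \Dim{\bar\phi_j(U)}$ combined with the criticality of $W$. A Fubini-type calculation then gives $\|F_j\|_{1/s_j} = \|f_j\|_{1/s_j}$, closing the case.

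If no proper nontrivial critical subspace exists, I first invoke log-convexity of the HBL constant $C(s)$, proved via generalized H\"older as $C(\lambda s^1 + (1-\lambda) s^2) \le C(s^1)^\lambda C(s^2)^{1-\lambda}$, to reduce the problem to extreme points of the polytope defined by (\ref{subcriticalhypothesisfield}). At any extreme point with no proper nontrivial critical subspace, a count of linearly independent tight constraints (rank $m$ is required) combined with the reduction below forces all $s_j \in \{0, 1\}$. I then finish: for each $s_j = 0$, bound $f_j(\phi_j(x)) \le \|f_j\|_\infty$ and drop the $j$th factor; for each $s_j = 1$ with $\phi_j \ne 0$, fiber $V$ over $\phi_j(V)$ and apply the inductive hypothesis on $\ker\phi_j$ (strictly lower-dimensional) with exponents $(s_k)_{k \ne j}$, whose hypothesis is inherited since $\phi_j$ vanishes on $\ker\phi_j$. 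The subspace hypothesis at $W = \bigcap_{j : s_j = 1} \ker\phi_j$ forces this final domain to be $\{0\}$, reducing to the base case.

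For the converse, I use test sets. For finite $\FF$, taking $E = W$ in (\ref{BLfield;sets}) yields $|\FF|^{\Dim{W}} \le \prod_j |\FF|^{s_j \Dim{\phi_j(W)}}$, hence (\ref{subcriticalhypothesisfield}). For $\FF = \QQ$, pick a $\ZZ$-basis $\{e_1, \ldots, e_r\}$ of a $\ZZ$-lattice $\QQ$-spanning $W$ and set $E_N = \{\sum_i a_i e_i \from a_i \in \{1, \ldots, N\}\}$; then $|E_N| = N^{\Dim{W}}$ and $|\phi_j(E_N)| \le C_j N^{\Dim{\phi_j(W)}}$, so substituting into (\ref{BLfield;sets}) and letting $N \to \infty$ gives the subspace inequality. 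The main obstacle is the residual sub-case in the inductive step --- the absence of a proper critical subspace at an interior $s$ --- which necessitates the log-convexity reduction to extreme points before the coordinate-extremum case analysis can close the argument.
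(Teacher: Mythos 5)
Your proposal matches the paper's strategy in all essentials: induction on $\Dim{V}$, factorization through a proper nontrivial critical subspace via the quotient datum $\fv_{V/W}$ (with the same $F_j$ and the Fubini identity $\|F_j\|_{1/s_j}=\|f_j\|_{1/s_j}$), interpolation to extreme points to invoke the structure result that extreme points without a proper critical subspace lie in $\{0,1\}^m$, and the same test-set argument for the converse. The only cosmetic departures are that you interpolate via a generalized-H\"older log-convexity argument rather than the paper's multilinear Riesz--Thorin, and you close the $\{0,1\}^m$ case by fibering over $\phi_j(V)$ and re-invoking the induction on $\ker\phi_j$ rather than the paper's non-inductive term-by-term comparison in Lemma~\ref{lemma:exponentsallone}; both are valid and neither changes the architecture.
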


The conclusions \eqref{BL} and \eqref{BLfield} remain valid for 
functions $f_j$ without the requirement that the $\ell^{1/s_j}$ norms are finite, 
under the convention that the product $0\cdot\infty$ is interpreted as zero whenever it arises. 
For then if any $f_j$ has infinite norm, then either the right-hand side is infinite, or both sides are zero; in either case, the inequality holds. 

\begin{proof}[Reduction of Theorem~\ref{thm:2new} in the case of torsion-free codomains to Theorem~\ref{thm:field}]
Let $\fg = 
\tup{G,\tup{G_j},\tup{\phi_j}}$ be an Abelian group HBL datum, 
and let $s\in\PP(\fg)$; furthermore suppose that each group $G_j$ is torsion-free. 
Let $\fv=\tup{V,\tup{V_j},\tup{\psi_j}}$ be a vector space HBL datum associated to
$\fg$ by the construction specified above.
We claim that $s\in\PP(\fv)$.
%
Indeed, given any subspace $W \subim V$, there exists a basis $S$ for $W$ over $\QQ$ which consists of elements of $\ZZ^d$. 
Define $H\subim \ZZ^d$ to be the subgroup generated by $S$ (over $\ZZ$).
Then $\Rank{H}=\Dim{W}$.  
Moreover, $\psi_j(W)$ equals the span over $\QQ$ of $\tilde \phi_j(H)$, and $\Dim{\psi_j(W)} = \Rank{\tilde \phi_j(H)}$.
The hypothesis $\Rank{H}\le \sum_{j=1}^m s_j \Rank{\tilde \phi_j(H)}$ is therefore equivalently written as $\Dim{W} \le \sum_{j=1}^m s_j \Dim{\psi_j(W)}$, which is \eqref{subcriticalhypothesisfield} for $W$.
Since this holds for every subspace $W$, $s\in\PP(\fv)$.

Consider the $m$--tuple $f=\tup{f_j}$ corresponding to any inequality in \eqref{BL} for $\tup{\ZZ^d,\tup{\ZZ^{d_j}},\tup{\tilde \phi_j}}$ and $s$, and for each $j$ define $F_j$ by $F_j(y)=f_j(y)$ if $y\in\ZZ^{d_j}\subseteq\QQ^{d_j}$, and $F_j(y)=0$ otherwise.
%
%
By Theorem~\ref{thm:field}, 
\[
\sum_{x\in \ZZ^d} \prod_{j=1}^m f_j(\tilde \phi_j(x)) 
= \sum_{x\in \QQ^d} \prod_{j=1}^m F_j(\psi_j(x)) 
\le \prod_{j=1}^m \norm{F_j}_{1/s_j}
= \prod_{j=1}^m \norm{f_j}_{1/s_j}.
\]
Thus the conclusion \eqref{BL} of Theorem~\ref{thm:2new} is satisfied.
%
%
\end{proof}

Conversely, it is possible to derive Theorem~\ref{thm:field} for $\FF=\QQ$ from the special case of 
Theorem~\ref{thm:2new} in which $G=\ZZ^d$ and $G_j = \ZZ^{d_j}$ by similar reasoning, using 
multiplication by large integers to clear denominators.

This reasoning in this reduction establishes
\begin{lemma} \label{lem:PZequalsPQ}
Let $\tup{G,\tup{G_j},\tup{\phi_j}}$ be an Abelian group HBL datum with torsion-free codomains $G_j$, 
let $\tup{\ZZ^d,\tup{\ZZ^{d_j}},\tup{\tilde \phi_j}}$ be the associated datum specified above, 
and let $\tup{\QQ^d,\tup{\QQ^{d_j}},\tup{\psi_j}}$ be an associated vector space HBL datum.
Then 
\[ \PP\dtup{G,\dtup{G_j},\dtup{\phi_j}} = \PP\dtup{\ZZ^d,\dtup{\ZZ^{d_j}},\dtup{\tilde \phi_j}} = \PP\dtup{\QQ^d,\dtup{\QQ^{d_j}},\dtup{\psi_j}}.  \] 
\end{lemma}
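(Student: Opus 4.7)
The plan is to prove the two asserted equalities separately by tracking how ranks of subgroups of $\ZZ^d$ relate to dimensions of $\QQ$-subspaces of $\QQ^d$. The first equality $\PP(\fg) = \PP(\ZZ^d,(\ZZ^{d_j}),(\tilde\phi_j))$ is essentially by construction: the procedure preceding the lemma fixes group isomorphisms $G \cong \ZZ^d$ and $G_j \cong \ZZ^{d_j}$ through which $\phi_j$ corresponds to $\tilde\phi_j$. Under these isomorphisms, subgroups of $G$ correspond bijectively to subgroups of $\ZZ^d$ with preservation of rank, and the image ranks $\Rank{\phi_j(H)}$ correspond to the ranks of the image subgroups under $\tilde\phi_j$. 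Consequently the two lists of defining inequalities from Definition~\ref{def:P} coincide, hence so do the polytopes.

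For the second equality $\PP(\ZZ^d,(\ZZ^{d_j}),(\tilde\phi_j)) = \PP(\QQ^d,(\QQ^{d_j}),(\psi_j))$, I would set up a rank/dimension correspondence in both directions. The inclusion $\PP(\ZZ^d,\ldots) \subseteq \PP(\QQ^d,\ldots)$ is already exhibited inside the reduction argument just preceding the lemma: given any subspace $W \subim \QQ^d$, one selects a basis of $W$ drawn from $\ZZ^d$, which generates a subgroup $H \subim \ZZ^d$ satisfying $\Rank{H} = \Dim{W}$ and $\Rank{\tilde\phi_j(H)} = \Dim{\psi_j(W)}$. Hence every inequality \eqref{subcriticalhypothesisfield} indexed by $W$ coincides with an inequality \eqref{subcriticalhypothesis2} indexed by some $H$, and any $s$ in the former polytope lies in the latter.

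For the reverse inclusion, I would associate to each subgroup $H \subim \ZZ^d$ its $\QQ$-span $W = \QQ \cdot H \subim \QQ^d$. Then $\Dim{W} = \Rank{H}$ directly from the definition of rank, and since $\psi_j$ is $\QQ$-linear and restricts to $\tilde\phi_j$ on $\ZZ^d$, one has $\psi_j(W) = \QQ \cdot \tilde\phi_j(H)$, so $\Dim{\psi_j(W)} = \Rank{\tilde\phi_j(H)}$. Thus every inequality \eqref{subcriticalhypothesis2} indexed by a subgroup $H$ of $\ZZ^d$ is realized as an inequality \eqref{subcriticalhypothesisfield} for the corresponding subspace $W$, and any $s \in \PP(\QQ^d,\ldots)$ lies in $\PP(\ZZ^d,\ldots)$. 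No substantial obstacle arises; the only item to verify carefully is the identity $\psi_j(\QQ \cdot H) = \QQ \cdot \tilde\phi_j(H)$, which is immediate from $\QQ$-linearity of $\psi_j$ together with $\psi_j|_{\ZZ^d} = \tilde\phi_j$.
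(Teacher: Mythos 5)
Your proof is correct and takes essentially the same approach as the paper. The paper establishes the inclusion $\PP(\ZZ^d,\ldots)\subseteq\PP(\QQ^d,\ldots)$ inside the ``Reduction of Theorem~\ref{thm:2new}...'' argument exactly as you describe (lifting a $\QQ$-basis of $W$ from $\ZZ^d$), and the reverse inclusion, which you prove by passing to the $\QQ$-span $W=\QQ\cdot H$, appears verbatim in the paper's later proof of Lemma~\ref{rationalgroup} (the paper also notes $H=V\cap\ZZ^d$ as an alternative witness in the other direction). The one slip worth fixing is the phrase ``any $s$ in the former polytope lies in the latter'' at the end of your second paragraph: after observing that every inequality \eqref{subcriticalhypothesisfield} coincides with some inequality \eqref{subcriticalhypothesis2}, what follows is that the polytope cut out by \eqref{subcriticalhypothesis2} (having a superset of constraints) is contained in the one cut out by \eqref{subcriticalhypothesisfield}, i.e.\ $\PP(\ZZ^d,\ldots)\subseteq\PP(\QQ^d,\ldots)$; your ``former/latter'' wording, read most naturally, states the reverse of what the argument establishes, even though the intended inclusion is the correct one.
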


\subsection{On groups $G$ with torsion} \label{subsect:torsion}
%
In this subsection we temporarily relax the requirement in the definition of an Abelian group HBL datum 
that the finitely generated Abelian group $G$ be torsion-free. We call such a more general $(G,(G_j),(\phi_j))$ an HBL datum with torsion.
%

%
%
It was shown in \cite[Theorem~2.4]{BCCT10} that for an HBL datum with torsion, 
the rank conditions \eqref{subcriticalhypothesis2} are necessary and sufficient for the existence of some finite constant $C$ 
such that \eqref{BL-C} holds.  %
A consequence of Theorem~\ref{thm:2new} is a concrete upper bound for the constant $C$ in these inequalities.
The torsion subgroup $T(G)$ of $G$ is the (finite) set of all elements $x\in G$ for which there exists $0\ne n\in\ZZ$ such that $nx=0$.

\begin{theorem}
Consider $\tup{G,\tup{G_j},\tup{\phi_j}}$, an HBL datum with torsion, and $s\in[0,1]^m$.
If \eqref{subcriticalhypothesis2} holds, then \eqref{BL-C} holds with $C=|T(G)|$.
In particular,  
\begin{equation} \label{inequalitywithtorsion}
|E|\le |T(G)|\cdot \prod_{j=1}^m |\phi_j(E)|^{s_j} \qquad\text{for all nonempty finite sets } E \subseteq G.
\end{equation}
Conversely, if \eqref{inequalitywithtorsion} holds for $s\in[0,1]^m$, then $s$ satisfies \eqref{subcriticalhypothesis2}.
\end{theorem}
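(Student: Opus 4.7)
The plan is to peel off the torsion subgroup of $G$ and reduce the forward direction to Theorem~\ref{thm:2new}, and then to handle the converse by the same ``box'' construction used in the necessity argument given earlier in this section.

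For the forward direction, I would invoke the structure theorem for finitely generated Abelian groups to write $G = \tilde G \oplus T$ where $T = T(G)$ and $\tilde G$ is a torsion-free subgroup of $G$; every $x\in G$ then factors uniquely as $x = y + t$ with $y\in \tilde G$ and $t\in T$. Let $\tilde \phi_j \from \tilde G \to G_j$ denote the restriction of $\phi_j$. First I would verify that the rank hypothesis \eqref{subcriticalhypothesis2} for the original datum descends to $\tup{\tilde G, \tup{G_j}, \tup{\tilde \phi_j}}$: any subgroup $\tilde H \subim \tilde G$ is simultaneously a subgroup of $G$ of the same rank, and $\tilde \phi_j(\tilde H) = \phi_j(\tilde H)$, so the required inequalities carry over. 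Since $\tilde G$ is torsion-free, Theorem~\ref{thm:2new} applies to the reduced datum, noting that its statement only constrains the domain and allows the codomains $G_j$ to have torsion.

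Next, I would split the sum over $x\in G$ as an iterated sum over $t\in T$ and $y\in \tilde G$, using $\phi_j(y+t) = \tilde\phi_j(y) + \phi_j(t)$. For each fixed $t$ and index $j$, the translated function $g_j^{(t)}(z) \ceq f_j\tup{z + \phi_j(t)}$ is nonnegative with the same $\ell^{1/s_j}$-norm as $f_j$ by translation invariance of counting measure. Applying Theorem~\ref{thm:2new} to the reduced datum with these translated functions gives, for each $t\in T$,
\[
\sum_{y \in \tilde G} \prod_{j=1}^m f_j\tup{\tilde \phi_j(y) + \phi_j(t)} = \sum_{y\in \tilde G} \prod_{j=1}^m g_j^{(t)}(\tilde \phi_j(y)) \le \prod_{j=1}^m \norm{f_j}_{1/s_j}.
\]
Summing over the $|T(G)|$ values of $t\in T$ yields \eqref{BL-C} with $C = |T(G)|$, and specialising to $f_j = \indicator_{\phi_j(E)}$ gives \eqref{inequalitywithtorsion}.

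For the converse, I would repeat the necessity argument given earlier in this section nearly verbatim: fix $H\subim G$ of rank $r$, pick $e_1,\dots,e_r\in H$ witnessing independence, form the finite set $E_N = \set{\sum_{i=1}^r n_i e_i : 1\le n_i\le N}$ of size $N^r$, observe $|\phi_j(E_N)| \le A_j N^{\Rank{\phi_j(H)}}$, plug into \eqref{inequalitywithtorsion}, and let $N\to\infty$; the extra factor $|T(G)|$ in the constant is independent of $N$ and so the asymptotic comparison still forces $r \le \sum_{j=1}^m s_j \Rank{\phi_j(H)}$. I expect no genuine obstacle; the only mild subtlety is the applicability of Theorem~\ref{thm:2new} to the reduced datum, which is legitimate precisely because that theorem is indifferent to torsion in the codomains.
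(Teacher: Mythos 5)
Your proposal is correct and follows the paper's own proof essentially verbatim: the same splitting $G\cong\tilde G\oplus T(G)$, the same translation of the $f_j$ by $\phi_j(t)$ for each $t\in T(G)$ followed by an application of Theorem~\ref{thm:2new} to the torsion-free reduced datum, the same summation over $T(G)$, and the same box-set/$N\to\infty$ argument for the converse (where the extra factor $|T(G)|$ is harmlessly absorbed into the constant). The only cosmetic difference is that the paper phrases the converse as restricting \eqref{inequalitywithtorsion} to $E\subseteq\tilde G$ before invoking the earlier necessity proof, whereas you apply the box construction inside an arbitrary $H\le G$ directly; both versions work for the same reason.
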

\begin{proof}
To prove \eqref{BL-C}, express $G$ isomorphically as $\tilde G\oplus T(G)$ where $\tilde G \le G$ is torsion-free.
Thus arbitrary elements $x\in G$ are expressed as $x=(\tilde x,t)$  with $\tilde x\in\tilde G$ and $t\in T(G)$.
Then $\tup{\tilde G,\tup{G_j},\tup{\restr{\phi_j}{\tilde G}}}$ is an Abelian group HBL datum (with $\tilde G$ torsion-free) to which Theorem~\ref{thm:2new} can be applied. 
Consider any $t\in T(G)$. 
Define $g_j\from G_j\to[0,\infty)$ by $g_j(x_j) = f_j(x_j+\phi_j(0,t))$.
Then $f_j(\phi_j(y,t)) = f_j(\phi_j(y,0)+\phi_j(0,t)) = g_j(\phi_j(y,0))$, so
\begin{equation*}
\sum_{y\in\tilde G} \prod_{j=1}^m f_j(\phi_j(y,t))
= \sum_{y\in\tilde G} \prod_{j=1}^m g_j(\phi_j(y,0))
\le \prod_{j=1}^m \norm{\restr{g_j}{\phi_j(\tilde G)}}_{1/s_j}
\le \prod_{j=1}^m \norm{f_j}_{1/s_j}.
\end{equation*}
The first inequality is an application of Theorem~\ref{thm:2new}.
Summation with respect to $t\in T(G)$ gives the required bound.

To show necessity, we consider just the inequalities \eqref{inequalitywithtorsion} corresponding to the subsets $E \subseteq \tilde G$ and follow the proof of the converse of Theorem~\ref{thm:2new}, except substituting $A|T(G)|$ for $A$.
\end{proof}
The factor $|T(G)|$ cannot be improved if the groups $G_j$ are torsion free, or more generally if $T(G)$ is contained in the intersection of the kernels of all the homomorphisms $\phi_j$; this is seen by considering $E=T(G)$. However, it is not optimal, in general. The optimal bound, for arbitrary finitely generated
Abelian groups, is determined in a paper \cite{christfiniteG2013} that builds on the present one.

\subsection{Polytopes $\PP$ for vector space HBL data} \label{sec:P}

\begin{definition} \label{def:P_vector}
For any vector space HBL datum $\fv=\tup{V,\tup{V_j},\tup{\phi_j}}$, 
$\PP(\fv)$ denotes the set of all $s\in[0,1]^m$ that satisfy \eqref{subcriticalhypothesisfield}. 
\end{definition}

Now we prove Proposition~\ref{prop:maxexponent}, which asserts that there is no loss of information in restricting the discussion to exponents $s_j \le 1$.
\begin{proof}[Proof of Proposition~\ref{prop:maxexponent}]
Proposition~\ref{prop:maxexponent} was formulated in terms of Abelian group HBL data and inequalities.
Here we show the corresponding result for vector space HBL data and inequalities, obtaining the version for Abelian groups as a direct consequence.

%
Suppose that a vector space HBL datum $\tup{V,\tup{V_j},\tup{\phi_j}}$ and $s\in[0,\infty)^m$ satisfy \eqref{subcriticalhypothesisfield}, and suppose that $s_k >1$ for some $k$.
Define $t \in [0,\infty)^m$ by $t_j=s_j$ for $j \ne k$, and $t_k=1$.
We claim that $t$ continues to satisfy \eqref{subcriticalhypothesisfield}.

Consider any subspace $W\subim V$. In order to verify \eqref{subcriticalhypothesisfield}
for $W$ and $t$, define $W' = W\cap\Kernel{\phi_k}$. 
Choose a supplement $U$ for $W'$ in $W$;
that is, $W=W'+U$ and $W'\cap U=\{0\}$.
Then by \eqref{subcriticalhypothesisfield} for $s$,
\[ \Dim{W'}\le \sum_{j=1}^m s_j\Dim{\phi_j(W')} 
= s_k\cdot 0 + \sum_{j\ne k} s_j\Dim{\phi_j(W')}
= \sum_{j\ne k} t_j\Dim{\phi_j(W')}.  \]
Since $\phi_k$ is injective on $U$ and $t_k=1$, $\Dim{U} = t_k\Dim{\phi_k(U)}$.
Therefore 
\begin{align*}
\Dim{W}
&= \Dim{U} + \Dim{W'}
\\&= t_k\Dim{\phi_k(U)} + \Dim{W'}
\\&\le t_k\Dim{\phi_k(U)} + \sum_{j\ne k} t_j\Dim{\phi_j(W')}
\\& \le  t_k\Dim{\phi_k(W)} + \sum_{j\ne k} t_j\Dim{\phi_j(W)}
\\& = \sum_{j=1}^m t_j\Dim{\phi_j(W)}.
\end{align*}

Given an $m$--tuple $s$ with multiple components $s_k > 1$, we 
argue by induction on the number of such indices $k$, with the result just proved serving as the induction step.

Our desired conclusion concerning \eqref{subcriticalhypothesis2} follows from Lemma~\ref{lem:PZequalsPQ}, 
by considering the associated vector space HBL datum $\tup{\QQ^d,\tup{\QQ^{d_j}},\tup{\psi_j}}$ and noting that the lemma was established without assuming $s_j \le 1$.

Next, we show the result concerning \eqref{BL}: If \eqref{BL} holds for some $s\in[0,\infty)^m$,
then it also holds for $t$, where $t_j = \min(s_j,1)$ for all $j\in\{1,2,\dots,m\}$. 
Consider the following less structured situation.
Let $X,X_1,\ldots,X_m$ be sets and $\phi_j \from X \to X_j$ be functions for $j\in\set{1,\ldots,m}$. 
Let $s\in[0,\infty)^m$ with some $s_k>1$, and suppose that $|E| \le \prod_{j=1}^m |\phi_j(E)|^{s_j}$ for any finite nonempty subset $E \subseteq X$. 
Consider any such set $E$.
For each $y \in \phi_k(E)$, let $E_y = \phi_k^{-1}(y) \cap E$, the preimage of $y$ under $\restr{\phi_k}{E}$; thus $|\phi_k(E_y)| = 1$.
By assumption, $|E_y| \le \prod_{j=1}^m |\phi_j(E_y)|^{s_j}$, so it follows that 
\[
|E_y| \le \prod_{j\ne k}|\phi_j(E_y)|^{s_j} \le \prod_{j\ne k} |\phi_j(E)|^{s_k} =  \prod_{j\ne k} |\phi_j(E)|^{t_k}.
\]
Since $E$ can be written as the union of disjoint sets $\bigcup_{y\in \phi_k(E)} E_y$, we obtain
\[ 
|E| = \sum_{y\in \phi_k(E)} |E_y| \le \sum_{y \in  \phi_k(E)} \prod_{j \ne k} |\phi_j(E)|^{t_k} = |\phi_k(E)| \cdot \prod_{j \ne k} |\phi_j(E)|^{t_k} =  \prod_{j =1}^m |\phi_j(E)|^{t_k}.
\]
Given an $m$--tuple $s$ with multiple components $s_k > 1$, we can consider each separately and apply the same reasoning to obtain an $m$--tuple $t\in[0,1]^m$ with $t_j = \min(s_j,1)$ for all $j$.
By picking $\tup{X,\tup{X_j},\tup{\phi_j}} \ceq \tup{\ZZ^d,\tup{\ZZ^{d_j}},\tup{\phi_j}}$, 
we now conclude that \eqref{BL;sets} holds for $t$.
By Theorem~\ref{thm:listoffive}, \eqref{BL;sets} implies \eqref{BL};
the proof of that theorem given below is independent of Proposition~\ref{prop:maxexponent} so this concludes its proof.

Similar conclusions can be obtained for \eqref{BL;sets} and \eqref{BLfield;sets}.
\end{proof}

\subsection{Interpolation between extreme points of $\PP$} \label{subsect:interpolation}

To complete the proof of Theorem~\ref{thm:field}, we must show that \eqref{BLfield} holds for every $s\in\PP$.
We next prove that if \eqref{BLfield} holds at each extreme point $s$ of $\PP$, then it holds for all $s\in\PP$.
In \sectn{\ref{subsect:criticalsubspaces}}, we will show that for any extreme point $s$ of $\PP$, 
the hypothesis \eqref{subcriticalhypothesisfield} can be restated in a special form.
Finally, in \sectn{\ref{sec:thm:field-proof}} we will use this reformulation to prove \eqref{BLfield} for extreme points.

%
%
Let $\tup{X_j,\mathcal{A}_j,\mu_j}$ be measure spaces for $j\in\set{1,2,\ldots,m}$, where each $\mu_j$ is a nonnegative measure on the $\sigma$--algebra $\mathcal{A}_j$.
Let $S_j$ be the set of all simple functions $f_j \from X_j\to\CC$.
Thus $S_j$ is the set of all $f_j \from X_j\to\CC$ that can be expressed in the form $\sum_i c_i \indicator_{E_i}$ where $c_i\in\CC$, $E_i\in\mathcal{A}_j$, $\mu_j(E_i)<\infty$, and the sum extends over finitely many indices $i$.

Let $T \from \prod_{j=1}^m \CC^{X_j} \to \CC$ be a multilinear map; i.e., for any $m$--tuple $f\in \prod_{j=1}^m \CC^{X_j}$ where $f_k=c_0f_{k,0}+c_1f_{k,1}$ for $c_0,c_1\in\CC$ and $f_{k,0},f_{k,1}\in\CC^{X_k}$,
%
\[
T(f)
= c_0T\dtup{f_1,\ldots,f_{k-1},f_{k,0},f_{k+1},\ldots,f_m} + c_1T\dtup{f_1,\ldots,f_{k-1},f_{k,1},f_{k+1},\ldots,f_m}.
\]
%

One multilinear extension of the Riesz-Thorin theorem states the following (see, e.g., \cite{bennettsharpley}).
\begin{proposition}[Multilinear Riesz--Thorin theorem]
\label{prop:rieszthorin}
Suppose that $p_0=\tup{p_{j,0}},p_1=\tup{p_{j,1}} \in [1,\infty]^m$. 
Suppose that there exist $A_0,A_1\in[0,\infty)$ such that
\begin{equation*}
\lt|T(f)\rt| \le A_0\prod_{j=1}^m \norm{f_j}_{p_{j,0}} \qquad\text{and}\qquad \lt|T(f)\rt| \le A_1\prod_{j=1}^m \norm{f_j}_{p_{j,1}} \qquad\text{for all } f \in \prod_{j=1}^m S_j.
\end{equation*}
For each $\theta\in(0,1)$ define exponents $p_{j,\theta}$ by
\begin{equation*}
\frac1{p_{j,\theta}} \ceq \frac{\theta}{p_{j,0}} + \frac{1-\theta}{p_{j,1}}.
\end{equation*}
Then for each $\theta\in(0,1)$,
\begin{equation*}
 \lt|T(f)\rt| \le A_0^\theta A_1^{1-\theta}\prod_{j=1}^m \norm{f_j}_{p_{j,\theta}} \qquad\text{for all } f \in \prod_{j=1}^m S_j.
 \end{equation*}
Here $\norm{f_j}_{p}=\norm{f_j}_{L^{p}(X_j,\mathcal{A}_j,\mu_j)}$.
\end{proposition}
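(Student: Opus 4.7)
The plan is to prove this by complex interpolation via Hadamard's three-lines lemma, in the classical Riesz--Thorin style. By multilinear homogeneity of $T$ it suffices to prove $|T(f)| \le A_0^\theta A_1^{1-\theta}$ under the normalization $\norm{f_j}_{p_{j,\theta}} = 1$ for every $j$ (the case $f_j\equiv 0$ being trivial).

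Next I would build an analytic family of simple functions indexed by the closed strip $\mathcal{S}=\set{z\in\CC : 0\le\Re z\le 1}$. For each $j$, write $f_j=\sum_i c_{j,i}\indicator_{E_{j,i}}$ with pairwise disjoint $E_{j,i}$ of finite measure and $c_{j,i}=|c_{j,i}|e^{i\alpha_{j,i}}$, $|c_{j,i}|>0$. Define $1/p_j(z)\ceq (1-z)/p_{j,1}+z/p_{j,0}$, with the convention $1/\infty=0$, and set
\[ f_j(z) \ceq \sum_i |c_{j,i}|^{p_{j,\theta}/p_j(z)}\, e^{i\alpha_{j,i}}\, \indicator_{E_{j,i}}, \]
interpreting the exponent as $1$ when $p_{j,0}=p_{j,1}=\infty$. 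Then $f_j(\theta)=f_j$ and $f_j(z)\in S_j$ for every $z\in\mathcal{S}$. Define
\[ F(z) \ceq A_0^{-z}A_1^{-(1-z)}\, T\tup{f_1(z),\dots,f_m(z)}. \]
Multilinearity of $T$ makes $F$ a finite linear combination of functions $z\mapsto e^{\lambda z}$, so $F$ is entire and bounded on $\mathcal{S}$.

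The crucial computation is at the two boundaries. On $\Re z=1$, $\Re(1/p_j(z))=1/p_{j,0}$, hence $|f_j(z)(x)| = |f_j(x)|^{p_{j,\theta}/p_{j,0}}$, giving $\norm{f_j(z)}_{p_{j,0}}^{p_{j,0}} = \norm{f_j}_{p_{j,\theta}}^{p_{j,\theta}} = 1$; combined with the hypothesis at $p_0$ and $|A_0^{-z}|=A_0^{-1}$, $|A_1^{-(1-z)}|=1$, this yields $|F(z)|\le 1$. An analogous calculation on $\Re z=0$ using the hypothesis at $p_1$ again gives $|F(z)|\le 1$. Hadamard's three-lines lemma then delivers $|F(\theta)|\le 1$, which unwraps precisely to the desired inequality. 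The principal technical obstacle is the bookkeeping for endpoints where $p_{j,0}=\infty$ or $p_{j,1}=\infty$: the analytic family must be adjusted so that the exponent remains well-defined, and one verifies $\norm{f_j(z)}_\infty\le 1$ on the appropriate line in place of the $L^p$ identity. This is standard and introduces no new ideas.
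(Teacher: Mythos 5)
The paper does not actually prove Proposition~\ref{prop:rieszthorin}; it simply invokes it as a known result with a citation to Bennett--Sharpley. So there is no in-paper proof to compare against, and supplying one is fair game.

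Your proof is the standard complex-interpolation argument and it is essentially correct. The normalization step, the analytic family $f_j(z)=\sum_i |c_{j,i}|^{p_{j,\theta}/p_j(z)}e^{i\alpha_{j,i}}\indicator_{E_{j,i}}$ on disjoint sets, the observation that multilinearity reduces $F(z)=A_0^{-z}A_1^{-(1-z)}T(f_1(z),\dots,f_m(z))$ to a finite linear combination of $e^{\lambda z}$ with real $\lambda$ (hence entire and bounded on the strip), the boundary computations giving $|F|\le 1$ on $\Re z\in\{0,1\}$, and the appeal to Hadamard's three-lines lemma are all exactly right, with the endpoints $p_{j,\bullet}=\infty$ correctly flagged as requiring the substitution $\norm{f_j(z)}_\infty\le 1$ for the $L^p$-norm identity. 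Two small points you should state explicitly rather than leave implicit: (i) the hypothesis allows $A_0=0$ or $A_1=0$, in which case $A_0^{-z}$ is undefined, but then $T$ vanishes identically on $\prod_j S_j$ and the conclusion is trivial, so one may assume $A_0,A_1>0$; (ii) the reduction to $\norm{f_j}_{p_{j,\theta}}=1$ uses multilinear homogeneity of $T$ together with the fact that each $f_j$ may be assumed nonzero, as you note. With these clarifications the argument is complete, and it takes the route one would expect from the classical Riesz--Thorin proof.
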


In the context of Theorem~\ref{thm:field} with  vector space HBL datum $\tup{V,\tup{V_j},\tup{\phi_j}}$, we consider the multilinear map 
\[ T(f) \ceq \sum_{x\in V} \prod_{j=1}^m f_j(\phi_j(x)) \]
representing the left-hand side in \eqref{BLfield}.
\begin{lemma}
\label{lem:atextremepoints}
If \eqref{BLfield} holds for every extreme point of $\PP$, then it holds for every $s\in\PP$.
\end{lemma}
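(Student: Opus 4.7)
The plan is to exploit the convex polytope structure of $\PP$: since $\PP$ has only finitely many extreme points, any $s\in\PP$ can be written as a finite convex combination $s=\sum_{i=1}^N \theta_i s^{(i)}$ of extreme points $s^{(i)}\in\PP$, with $\theta_i\ge 0$ and $\sum_i\theta_i=1$. The goal is to propagate the inequality from these finitely many extreme points to $s$ via Proposition~\ref{prop:rieszthorin}.

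First I would promote the assumed inequality at each extreme point to complex-valued simple functions. Since $|T(f)|\le T(|f|)$ by multilinearity and the triangle inequality for the sum defining $T$, and since $S_j\subseteq \ell^{1/s^{(i)}_j}(V_j)$, the hypothesis yields $|T(f)|\le\prod_j \|f_j\|_{1/s^{(i)}_j}$ for all $f\in\prod_j S_j$ with constant $A_i=1$ at each extreme point $s^{(i)}$.

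Next, the core step is an induction on the number $N$ of extreme points appearing with positive weight. For $N=2$, apply Proposition~\ref{prop:rieszthorin} with $p_{j,0}=1/s^{(1)}_j$, $p_{j,1}=1/s^{(2)}_j$, $A_0=A_1=1$, and $\theta=\theta_1$; the interpolation formula then reads
\[ 1/p_{j,\theta} = \theta_1 s^{(1)}_j + (1-\theta_1) s^{(2)}_j = s_j, \]
so $|T(f)|\le\prod_j \|f_j\|_{1/s_j}$ on simple functions with constant $1^{\theta_1}\cdot 1^{1-\theta_1}=1$. For $N\ge 3$, decompose $s=\theta_1 s^{(1)}+(1-\theta_1)s'$ where $s' = (1-\theta_1)^{-1}\sum_{i\ge 2}\theta_i s^{(i)}$ lies in $\PP$ by convexity and satisfies the inequality on simple functions by the inductive hypothesis; then apply the two-point case. (Alternatively, one can apply the multilinear Riesz--Thorin theorem once with more than two endpoints if one prefers, but a two-endpoint induction keeps the invocation minimal.)

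Finally, extend from simple functions to arbitrary nonnegative $f_j\in\ell^{1/s_j}(V_j)$. When $s_j>0$ so that $1/s_j<\infty$, the simple functions $S_j$ are dense in $\ell^{1/s_j}(V_j)$, and the inequality persists by replacing each $f_j$ by an increasing sequence of finitely supported truncations and invoking monotone convergence on both sides. When $s_j=0$, so $\|f_j\|_\infty=\sup_{V_j} f_j$, truncating each $f_j$ to a large finite subset again produces simple approximants converging up to $f_j$ pointwise. The main obstacle I anticipate is the careful bookkeeping of the boundary case $s_j=0$ (where $1/s_j=\infty$) in both the Riesz--Thorin application and the density step, but this is routine once the main interpolation argument is in place.
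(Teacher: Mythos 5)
Your proof is correct and follows the same strategy as the paper: pass to simple functions, interpolate between the finitely many extreme points of $\PP$ via the multilinear Riesz--Thorin theorem (Proposition~\ref{prop:rieszthorin}) with all constants equal to $1$, and then extend to general nonnegative $f_j$ by monotone convergence. Your explicit induction on the number of extreme points appearing with positive weight in the convex combination fills in a step that the paper leaves implicit in its single appeal to Proposition~\ref{prop:rieszthorin}, which is stated only for two endpoints.
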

\begin{proof}
For any $\tilde f\in \prod_{j=1}^m S_j$, we define another $m$--tuple $f$ where for each $j$,  $f_j = |\tilde f_j|$ is a nonnegative simple function.
By hypothesis, the inequality in \eqref{BLfield} corresponding to $f$ holds at every extreme point $s$ of $\PP$, giving
\[
\lt| T(\tilde f)\rt| 
\le \sum_{x\in V} \prod_{j=1}^m \lt|\tilde f_j(\phi_j(x))\rt| 
= \sum_{x\in V} \prod_{j=1}^m f_j(\phi_j(x)) 
\le \prod_{j=1}^m \norm{f_j}_{1/s_j} 
= \prod_{j=1}^m \norm{\tilde f_j}_{1/s_j}.
\]
As a consequence of Proposition~\ref{prop:rieszthorin} (with constants $A_i=1$), and the fact that any $s\in\PP$ is a finite convex combination of the extreme points, this expression holds for any $s\in\PP$.
For any nonnegative function $F_j$ (e.g., in $\ell^{1/s_j}(V_j)$), there is an increasing sequence of nonnegative simple functions $f_j$ whose (pointwise) limit is $F_j$. 
Consider the $m$--tuple $F=\tup{F_j}$ corresponding to any inequality in \eqref{BLfield}, and consider a sequence of $m$--tuples $f$ which converge to $F$; then $\prod_{j=1}^m f_j$ also converges to $\prod_{j=1}^m F_j$. 
So by the monotone convergence theorem, the summations on both sides of the inequality converge as well.
\end{proof}

\subsection{Critical subspaces and extreme points} 
\label{subsect:criticalsubspaces}
Let $\fv=\tup{V,\tup{V_j},\tup{\phi_j}}$ be an arbitrary vector space HBL datum. 
Let $\PP=\PP(\fv)$ continue to denote the set of all $s\in[0,1]^m$ 
that satisfy \eqref{subcriticalhypothesisfield}.
The following key definition appears in \cite{BCCT08} and \cite{BCCT10}.
\begin{definition}
Consider any $s\in[0,1]^m$.
A subspace $W \subim V$ satisfying $\Dim{W}=\sum_{j=1}^m s_j\Dim{\phi_j(W)}$ is said to be a critical subspace with respect to $s$; 
one satisfying $\Dim{W}\le \sum_{j=1}^m s_j\Dim{\phi_j(W)}$ is said to be subcritical with respect to $s$; 
and a subspace satisfying $\Dim{W} > \sum_{j=1}^m s_j\Dim{\phi_j(W)}$ is said to be supercritical with respect to $s$.
$W$ is said to be strictly subcritical with respect to $s$ if $\Dim{W}<\sum_{j=1}^m s_j\Dim{\phi_j(W)}$.
\end{definition}
In this language, the conditions \eqref{subcriticalhypothesisfield} assert that that every subspace $W$ of $V$, including $\set{0}$ and $V$ itself, is subcritical; equivalently, there are no supercritical subspaces.  
We may sometimes omit the phrase ``with respect to $s$'', but these notions are always relative to some implicitly or explicitly specified tuple $s$ of exponents.

The goal of \sectn{\ref{subsect:criticalsubspaces}} is to establish the following:
\begin{proposition} 
\label{prop:0or1}
Let $s$ be an extreme point of $\PP(\fv)$.
Then some subspace $\set{0} \subpr W \subpr V$ is critical with respect to $s$, or $s\in\set{0,1}^m$.
\end{proposition}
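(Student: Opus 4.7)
The plan is to prove the contrapositive: suppose $s$ is an extreme point of $\PP(\fv)$ and no subspace $\{0\}\subpr W\subpr V$ is critical with respect to $s$; I will deduce $s\in\{0,1\}^m$. My main tool is the standard characterization that at an extreme point of a polytope in $\reals^m$, the gradients of the active defining inequalities span $\reals^m$. Let $K=\{k:0<s_k<1\}$, $J_0=\{j:s_j=0\}$, and $J_1=\{j:s_j=1\}$; the goal is to force $K=\emptyset$.

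First I will catalog the constraints that can be tight at $s$: the bounds $s_j\ge 0$ (gradient $e_j$) for $j\in J_0$, the bounds $s_j\le 1$ (gradient $-e_j$) for $j\in J_1$, and the subcriticality equalities at critical subspaces $W\subim V$, with gradient $(\Dim{\phi_i(W)})_i$. Under the hypothesis, the only such $W$ contributing a nontrivial gradient is $V$ itself, with gradient $v\ceq(\Dim{\phi_i(V)})_i$ (the subspace $\{0\}$ is always critical but its gradient is zero). Projecting every active gradient onto $\reals^K$ annihilates each $\pm e_j$ with $j\in J_0\cup J_1$ and leaves at most the single vector $v_K=(\Dim{\phi_k(V)})_{k\in K}$. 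Extremality requires these projections to span $\reals^K$, which forces $|K|\le 1$; and if $|K|=1$ with $K=\{k\}$, then $V$ must be critical with $\Dim{\phi_k(V)}>0$.

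To rule out the remaining case, I will invoke \eqref{subcriticalhypothesisfield} at the specific subspace $W_0\ceq\bigcap_{j\in J_1}\Kernel{\phi_j}$. For $j\in J_0$ we have $s_j=0$, and for $j\in J_1$ we have $\phi_j(W_0)=0$ by construction, so the inequality collapses to
\[
\Dim{W_0}\le s_k\Dim{\phi_k(W_0)}\le s_k\Dim{W_0}.
\]
Since $s_k<1$, this forces $W_0=0$, so the diagonal map $V\to\bigoplus_{j\in J_1}\phi_j(V)$, $v\mapsto(\phi_j(v))_{j\in J_1}$, is injective, giving $\Dim{V}\le\sum_{j\in J_1}\Dim{\phi_j(V)}$. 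Subtracting this from the criticality identity $\Dim{V}=\sum_{j\in J_1}\Dim{\phi_j(V)}+s_k\Dim{\phi_k(V)}$ yields $s_k\Dim{\phi_k(V)}\le 0$, hence $\Dim{\phi_k(V)}=0$ since $s_k>0$; this contradicts the extremality-derived condition $\Dim{\phi_k(V)}>0$. Hence $K=\emptyset$, as desired.

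The main obstacle I anticipate is identifying the right subspace $W_0$: the naive candidate $\Kernel{\phi_k}$ does not simplify the inequality cleanly, but taking the intersection of kernels over indices in $J_1$ decouples \eqref{subcriticalhypothesisfield} from every coordinate where $s_j\in\{0,1\}$, leaving $s_k$ in isolation so that $s_k<1$ can deliver the collapse $W_0=0$ and its injectivity consequence.
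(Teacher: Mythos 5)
Your proof is correct, and it reaches the same bottom line as the paper but by a more systematic route. The paper's Lemmas~\ref{lemma:0}, \ref{lemma:2}, and \ref{lemma:1} each carry out a small ad hoc perturbation (perturb one or two coordinates of $s$ and argue the perturbed point stays in $\PP(\fv)$, contradicting extremality); your invocation of the vertex characterization --- that the normals of the active inequalities must span $\reals^m$ --- packages all three of those perturbation arguments into one step. Since the constraint $\Dim{W}\le\sum_j s_j\Dim{\phi_j(W)}$ depends on $W$ only through the finite tuple $(\Dim{W},\Dim{\phi_1(W)},\dots,\Dim{\phi_m(W)})$, the polytope really is cut out by finitely many inequalities, so that characterization applies. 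Your projection onto the $K$-coordinates then cleanly delivers $|K|\le 1$, together with the facts that $V$ is critical and $\Dim{\phi_k(V)}>0$ when $|K|=1$ --- exactly the content of Lemmas~\ref{lemma:0}, \ref{lemma:1}, \ref{lemma:2}.

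The final contradiction is where the two proofs visibly diverge, though both hinge on the same subspace $W_0=\bigcap_{j\in J_1}\Kernel{\phi_j}$. The paper's Lemma~\ref{lemma:3} shows $\Dim{W_0}>0$ by rank-nullity (the image of $\psi:V\to\bigoplus_{j\in J_1}\phi_j(V)$ has dimension strictly less than $\Dim{V}$), and then observes $W_0$ is supercritical, contradicting $s\in\PP(\fv)$. You run this in reverse: you use subcriticality of $W_0$ (which you are entitled to since $s\in\PP(\fv)$) together with $s_k<1$ to force $W_0=0$, then use injectivity of $\psi$ to get $\Dim{V}\le\sum_{j\in J_1}\Dim{\phi_j(V)}$, which against criticality of $V$ yields $\Dim{\phi_k(V)}=0$, contradicting the extremality-derived $\Dim{\phi_k(V)}>0$. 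These are logically the contrapositives of one another --- the dimension bookkeeping is identical --- but your version turns the supercriticality contradiction into a clean dimension inequality, and it pairs naturally with the spanning argument that gave you $\Dim{\phi_k(V)}>0$ in the first place. Both are correct; yours is slightly more economical because the gradient-span step already hands you the facts that the paper needs three lemmas to collect.
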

\noindent These two possibilities are not mutually exclusive.
%
%
\begin{lemma} 
\label{lemma:0}
If $s$ is an extreme point of $\PP(\fv)$, and if $i$ is an index for which $s_i\notin\set{0,1}$, then $\Dim{\phi_i(V)}\ne 0$.
\end{lemma}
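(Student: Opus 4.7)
The plan is to argue the contrapositive: assume $\Dim{\phi_i(V)}=0$ and $s_i\in(0,1)$, and show that $s$ is not extreme in $\PP(\fv)$.

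If $\Dim{\phi_i(V)}=0$, then $\phi_i$ is the zero map, so $\phi_i(W)=\{0\}$ and $\Dim{\phi_i(W)}=0$ for every subspace $W\subim V$. Consequently, the inequality \eqref{subcriticalhypothesisfield} defining $\PP(\fv)$, namely
\[
\Dim{W}\le \sum_{j=1}^m s_j\Dim{\phi_j(W)} = \sum_{j\ne i} s_j\Dim{\phi_j(W)},
\]
does not involve the coordinate $s_i$ at all. So the condition $s\in\PP(\fv)$, together with $s\in[0,1]^m$, places no constraint on $s_i$ beyond $s_i\in[0,1]$.

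Now define $s^{(0)}$ and $s^{(1)}$ to be the tuples obtained from $s$ by replacing its $i$-th component with $0$ and $1$, respectively. Both tuples lie in $[0,1]^m$, both satisfy \eqref{subcriticalhypothesisfield} (since this does not depend on the $i$-th coordinate), and hence both belong to $\PP(\fv)$. They are distinct because they differ in the $i$-th coordinate. Since $s_i\in(0,1)$, we have the nontrivial convex decomposition
\[
s = s_i\cdot s^{(1)} + (1-s_i)\cdot s^{(0)},
\]
which exhibits $s$ as an interior point of a line segment in $\PP(\fv)$. Therefore $s$ is not an extreme point, contradicting the hypothesis. This forces $\Dim{\phi_i(V)}\ne 0$, as claimed.

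There is no substantive obstacle here; the only point to verify carefully is that nullifying $\phi_i$ truly decouples $s_i$ from every constraint defining $\PP(\fv)$ (including the bounds $0\le s_i\le 1$, which are preserved by the endpoint choices $0$ and $1$). Once this is observed, the convex-combination argument is immediate.
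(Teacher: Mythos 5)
Your proof is correct and follows essentially the same approach as the paper's: both observe that when $\phi_i$ is the zero map, the defining inequalities of $\PP(\fv)$ place no constraint on $s_i$, so any $s$ with $s_i\in(0,1)$ lies strictly between two other points of $\PP(\fv)$ and hence is not extreme. Your version simply makes the witnessing convex decomposition explicit, which the paper leaves implicit.
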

\begin{proof}
Suppose $\Dim{\phi_i(V)}=0$.
If $t\in[0,1]^m$ satisfies $t_j=s_j$ for all $j\ne i$, then $\sum_{j=1}^m t_j\Dim{\phi_j(W)} = \sum_{j=1}^m s_j \Dim{\phi_j(W)}$ for all subspaces $W \subim V$, so $t\in\PP(\fv)$ as well.
If $s_i\notin\set{0,1}$, then this contradicts the assumption that $s$ is an extreme point of $\PP(\fv)$. 
\end{proof}
\begin{lemma} 
\label{lemma:2}
Let $s$ be an extreme point of $\PP(\fv)$.
Suppose that no subspace $\set{0} \subpr W \subim V$ is critical with respect to $s$. 
Then $s\in\set{0,1}^m$.
\end{lemma}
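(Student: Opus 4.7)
The plan is to prove the contrapositive: assume $s\in\PP(\fv)$, assume every subspace $W$ with $\set{0}\subpr W\subim V$ is \emph{strictly} subcritical, and assume there is some index $i$ with $s_i\in(0,1)$; I will show $s$ cannot be an extreme point by exhibiting a small symmetric perturbation that keeps us inside $\PP(\fv)$.

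First I would set up the perturbation. Let $e_i\in\RR^m$ be the $i$-th standard basis vector and consider $s^{\pm}\ceq s\pm\lambda e_i$ for a parameter $\lambda>0$ to be chosen small. Since $s=\tfrac12(s^++s^-)$ and $s^+\ne s^-$, it suffices to show $s^+,s^-\in\PP(\fv)$ for some $\lambda>0$, which immediately contradicts extremality. (The degenerate case $\Dim{V}=0$ makes the rank-constraints trivial, so $\PP(\fv)=[0,1]^m$ whose extreme points are exactly $\set{0,1}^m$, handling that case separately.)

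Next I would verify the two families of constraints defining $\PP(\fv)$. The box constraints $0\le s_j\le 1$ for $j\ne i$ are unchanged by the perturbation, and $s^\pm_i\in[0,1]$ as soon as $\lambda\le\min(s_i,1-s_i)$, which is a nontrivial constraint because $s_i\in(0,1)$. For the rank constraints \eqref{subcriticalhypothesisfield}, the constraint at $W=\set{0}$ is the tautology $0\le 0$, and for any $W$ with $\set{0}\subpr W\subim V$ the hypothesis furnishes strict slack $\sigma(W)\ceq\sum_{j=1}^m s_j\Dim{\phi_j(W)}-\Dim{W}>0$. The key finiteness observation (already invoked after Definition~\ref{def:P}) is that the tuple $\tup{\Dim{W},\Dim{\phi_1(W)},\dots,\Dim{\phi_m(W)}}$ takes only finitely many values in $\set{0,1,\dots,\Dim{V}}^{m+1}$ as $W$ ranges over subspaces of $V$, so $\set{\sigma(W):\set{0}\subpr W\subim V}$ is a finite subset of $(0,\infty)$ and admits a positive minimum $\delta>0$. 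Since the perturbation shifts $\sigma$ by at most $\lambda\Dim{\phi_i(W)}\le\lambda\Dim{V}$, choosing $\lambda<\delta/\Dim{V}$ (in addition to the box-constraint bound above) preserves subcriticality for every nontrivial $W$.

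Combining these choices of $\lambda$ yields $s^\pm\in\PP(\fv)$, contradicting extremality of $s$, and so the assumption $s_i\in(0,1)$ must fail for every $i$, giving $s\in\set{0,1}^m$. The only step requiring care is the finiteness argument for the uniform slack $\delta$; everything else is a routine one-variable perturbation inside a polytope.
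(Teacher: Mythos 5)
Your proposal is correct and follows essentially the same route as the paper's proof: both observe that the hypothesis forces strict inequality in every nontrivial defining constraint, invoke finiteness of the constraint set to get uniform slack, and then perturb the single coordinate $s_i$ in both directions to contradict extremality. Your version is somewhat more quantitative (explicit choice of $\lambda$) and handles the degenerate $\Dim{V}=0$ case explicitly, but the underlying idea is identical.
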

\begin{proof}
Membership in $\PP(\fv)$ is decided by finitely many affine inequalities. The hypothesis of the lemma
means that $s$ satisfies each of these with strict inequality, except for the
inequality $0\le \sum_j s_j\cdot 0$ that arises from $W=\{0\}$.
Consequently any $t\in[0,1]^m$ that is sufficiently close to $s$ also belongs to $\PP(\fv)$.
If $s_i\notin\set{0,1}$ for some index $i$, then any 
$t\in[0,1]^m$ with $t_j=s_j$ for all $j\ne i$ and $t_i$ sufficiently close to $s_i$ belongs to $t\in\PP(\fv)$. 
This contradicts the assumption that $s$ is an extreme point.
\end{proof}
\begin{lemma} 
\label{lemma:1}
Let $s$ be an extreme point of $\PP(\fv)$.
Suppose that no subspace $\set{0} \subpr W \subpr V$ is critical with respect to $s$.
Then there exists at most one index $i$ for which $s_i\notin\set{0,1}$.
\end{lemma}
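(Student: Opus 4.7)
The plan is to exploit the standard characterization of extreme points of a polytope: a point $s$ in a polytope $P \subseteq \reals^m$ is an extreme point if and only if the set of affine inequalities defining $P$ that are active (i.e., hold as equalities) at $s$ has gradients spanning all of $\reals^m$. I will count these active constraints at $s$ and show that the working hypothesis forces the number of coordinates of $s$ lying strictly in $(0,1)$ to be at most one.

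The polytope $\PP(\fv)$ is cut out inside $\reals^m$ by three families of affine inequalities: the lower bounds $s_j \ge 0$, the upper bounds $s_j \le 1$, and the rank inequalities $\Dim{W} \le \sum_{j=1}^m s_j \Dim{\phi_j(W)}$ indexed by subspaces $W \subim V$. By the hypothesis of the lemma, no rank inequality corresponding to a subspace $\set{0} \subpr W \subpr V$ is tight at $s$. The rank inequality for $W=\set{0}$ reads $0 \le 0$; it is always tight but its gradient is the zero vector, so it contributes nothing to the span. The only remaining rank inequality that could be active at $s$ is the one coming from $W=V$, namely $\Dim{V} \le \sum_{j=1}^m s_j \Dim{\phi_j(V)}$, which has gradient $(\Dim{\phi_1(V)},\ldots,\Dim{\phi_m(V)})$ in $\reals^m$.

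Let $k \ceq \lvert\set{i : s_i \notin \set{0,1}}\rvert$. For each of the $m-k$ indices $j$ with $s_j \in \set{0,1}$, exactly one of the two box constraints $s_j \ge 0$, $s_j \le 1$ is active at $s$, contributing the gradient $\pm e_j$. These $m-k$ gradient vectors are linearly independent (they are supported on disjoint coordinates). Including the at-most-one additional gradient coming from the $W=V$ rank inequality, the span of all active gradients at $s$ has dimension at most $(m-k)+1$. Since $s$ is an extreme point of $\PP(\fv)$, this span must be all of $\reals^m$, whence $(m-k)+1 \ge m$, i.e.\ $k \le 1$, which is the desired conclusion.

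The argument is essentially a linear-algebra bookkeeping exercise; the only potential pitfall is ensuring that the $W=\set{0}$ inequality is correctly recognized as gradient-free so that the hypothesis of no nontrivial proper critical subspace truly eliminates all internal rank constraints. Once that observation is in place, no additional ingredients are required beyond the extreme-point criterion.
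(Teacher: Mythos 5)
Your proof is correct and takes a genuinely different route from the paper's. The paper argues by explicit perturbation: assuming two indices $k,l$ with $s_k,s_l\notin\{0,1\}$, it invokes Lemma~\ref{lemma:0} to get $\Dim{\phi_k(V)},\Dim{\phi_l(V)}>0$, then moves $s$ along the line $t_k=s_k+\eps\Dim{\phi_l(V)}$, $t_l=s_l-\eps\Dim{\phi_k(V)}$. That particular direction is chosen so that the (possibly active) constraint from $W=V$ stays exactly balanced, and for small $|\eps|$ all strictly subcritical proper subspaces stay strictly subcritical, producing a line segment through $s$ inside $\PP$ and contradicting extremality. You instead invoke the global criterion that the active-constraint gradients at a vertex must span $\reals^m$, then simply count: after the hypothesis rules out every rank inequality indexed by $\{0\}\subpr W\subpr V$, and after noting that $W=\{0\}$ yields the zero gradient, the active set consists of $m-k$ box gradients $\pm e_j$ plus at most one gradient from $W=V$, so $(m-k)+1\ge m$ forces $k\le 1$. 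The two arguments are complementary. Your counting argument is shorter, avoids Lemma~\ref{lemma:0} entirely, and makes the role of the missing critical subspaces structurally transparent; the paper's perturbation argument is more self-contained (it does not appeal to the vertex characterization of polytopes, relying only on the definition of extreme point as not a proper convex combination) and, via the carefully chosen direction, simultaneously explains why the conclusion cannot be sharpened to zero non-$\{0,1\}$ coordinates when $W=V$ is critical — which is exactly the case Lemma~\ref{lemma:3} then handles. Either approach is acceptable here; you just want to be sure the vertex/rank characterization is either cited or briefly re-derived, since the paper does not introduce it.
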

\begin{proof}
Suppose to the contrary that there were to exist distinct indices $k,l$ such that neither of $s_k,s_l$ belongs to $\set{0,1}$.
By Lemma~\ref{lemma:0}, both $\phi_k(V)$ and $\phi_l(V)$ have positive dimensions.
For $\eps\in\RR$ define $t$ by $t_j=s_j$ for all $j\notin\set{k,l}$,
\begin{equation*} 
t_k = s_k+\eps \Dim{\phi_l(V)} \text{ and } t_l = s_l-\eps \Dim{\phi_k(V)}.  
\end{equation*}
Whenever $|\eps|$ is sufficiently small, $t\in[0,1]^m$.
Moreover, $V$ remains subcritical with respect to $t$. 
If $|\eps|$ is sufficiently small, then every subspace $\set{0} \subpr W \subpr V$ remains strictly subcritical with respect to $t$, because the set of all parameters $\tup{\Dim{W},\Dim{\phi_1(W)},\ldots,\Dim{\phi_m(W)}}$ which arise, is finite. 
Thus $t\in\PP(\fv)$ for all sufficiently small $|\eps|$. Therefore $s$ is not an extreme point of $\PP(\fv)$.
\end{proof}
\begin{lemma} 
\label{lemma:3}
Let $s\in[0,1]^m$ be an extreme point of $\PP(\fv)$. Suppose that $V$ is critical with respect to $s$.
Suppose that there exists exactly one index $i\in \set{1,2,\ldots,m}$ for which $s_i\notin\set{0,1}$. 
Then $V$ has a subspace that is supercritical with respect to $s$.
\end{lemma}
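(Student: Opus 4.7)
The plan is to exhibit a supercritical subspace explicitly, constructed from the joint kernel of those $\phi_j$ with $s_j = 1$. Let $i$ be the unique index with $s_i = a \in (0,1)$, and set $p = \Dim{\phi_i(V)}$, which is positive by Lemma~\ref{lemma:0}. Let $I_1 = \set{j\ne i \from s_j = 1}$, let $D = \sum_{j\in I_1} \Dim{\phi_j(V)}$, and let $K = \bigcap_{j\in I_1} \Kernel{\phi_j}$ (interpreted as $V$ when $I_1$ is empty). The hypothesis that $V$ is critical then reads $\Dim{V} = ap + D$.

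The first step is the lower bound $\Dim{K} \ge 1$. I would consider the linear map $\Phi \from V \to \bigoplus_{j\in I_1} V_j$ defined by $\Phi(x) = \tup{\phi_j(x)}_{j\in I_1}$. Its kernel is exactly $K$, so $\Dim{V/K} \le D$, and hence $\Dim{K} \ge \Dim{V} - D = ap$. Since $\Dim{V} - D$ is a positive integer (and $ap > 0$), this forces $\Dim{K} \ge 1$.

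The second step is to produce the supercritical subspace inside $K$, with a case split according to whether $\phi_i$ vanishes on $K$. If $K \subseteq \Kernel{\phi_i}$, I take $W = K$: the sum $\sum_j s_j \Dim{\phi_j(W)}$ vanishes term by term (trivially for $j$ with $s_j = 0$, by definition of $K$ for $j \in I_1$, and because $\phi_i(K) = 0$ for $j = i$), so $0 < \Dim{W}$ shows $W$ is supercritical. Otherwise $K \not\subseteq \Kernel{\phi_i}$, and I take $W$ to be any complement of $K \cap \Kernel{\phi_i}$ inside $K$. Then $W \subseteq K$ gives $\phi_j(W) = 0$ for $j \in I_1$, $W \cap \Kernel{\phi_i} = \set{0}$ makes $\phi_i|_W$ injective, and $\Dim{W} > 0$; hence $\sum_j s_j \Dim{\phi_j(W)} = a\Dim{W} < \Dim{W}$ and $W$ is again supercritical.

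The main obstacle is recognizing that the joint kernel $K$ must have positive dimension; the embedding $V/K \hookrightarrow \bigoplus_{j\in I_1} V_j$ combined with the critical identity $\Dim{V} = ap + D$ is the key structural input, and once it is in place the construction of $W$ is routine case analysis. The extremality hypothesis is used only indirectly, through Lemma~\ref{lemma:0}, to guarantee $p > 0$; the rest of the argument relies solely on criticality of $V$ and the form of $s$.
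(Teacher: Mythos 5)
Your proposal is correct and takes essentially the same approach as the paper: take the joint kernel $K$ of the $\phi_j$ with $s_j=1$, show $\Dim{K}>0$ via the critical identity and the embedding $V/K\hookrightarrow\bigoplus_{j\in I_1}V_j$, and conclude $K$ (or a subspace of it) is supercritical because the only surviving term in $\sum_j s_j\Dim{\phi_j(\cdot)}$ carries coefficient $s_i<1$. The case split in your second step is unnecessary --- the paper takes $W=K$ outright and uses the single estimate $s_i\Dim{\phi_i(K)}\le s_i\Dim{K}<\Dim{K}$, which covers both of your cases at once.
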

\begin{proof}
By Lemma~\ref{lemma:0}, $\Dim{\phi_i(V)}>0$.
Let $K$ be the set of all indices $k$ for which $s_k=1$.
The hypothesis that $V$ is critical means that
\begin{equation*}
\Dim{V} = s_i\Dim{\phi_i(V)} + \sum_{k\in K} s_k\Dim{\phi_k(V)}. 
\end{equation*}
Since $s_i>0$ and $\Dim{\phi_i(V)}>0$,
\begin{equation*}
\sum_{k\in K}\Dim{\phi_k(V)} = \sum_{k\in K} s_k \Dim{\phi_k(V)} = \Dim{V}-s_i\Dim{\phi_i(V)} < \Dim{V}.
\end{equation*}
Consider the subspace $W \subim V$ defined by
\begin{equation*} 
W=\bigcap_{k\in K}\Kernel{\phi_k};
\end{equation*}
this intersection is interpreted to be $W=V$ if the index set $K$ is empty.
$W$ necessarily has positive dimension. 
Indeed, $W$ is the kernel of the map $\psi\from V\to\bigoplus_{k\in K}\phi_k(V)$, defined by $\psi(x)=\tup{\phi_k(x): k\in K}$, where $\bigoplus$ denotes the direct sum of vector spaces.
The image of $\psi$ is a subspace of $\bigoplus_{k\in K}\phi_k(V)$, a vector space whose dimension $\sum_{k\in K} \Dim{\phi_k(V)}$ is strictly less than $\Dim{V}$.
Therefore $\Kernel{\psi}=W$ has dimension greater than or equal to $\Dim{V}-\sum_{k\in K}\Dim{\phi_k(V)}>0$.
Since $\phi_k(W)=\set{0}$ for all $k\in K$,
\begin{align*} 
\sum_{j=1}^m s_j \Dim{\phi_j(W)} &= s_i\Dim{\phi_i(W)} + \sum_{k\in K} \Dim{\phi_k(W)} \\
&= s_i\Dim{\phi_i(W)} \\
&\le s_i\Dim{W}.
\end{align*}
Since $s_i<1$  and $\Dim{W}>0$, $s_i\Dim{\phi_i(W)}$ is strictly less than $\Dim{W}$, whence $W$ is supercritical. 
\end{proof}
\begin{proof}[Proof of Proposition~\ref{prop:0or1}]
Suppose that there exists no critical subspace $\set{0} \subpr W \subpr V$. 
By Lemma~\ref{lemma:2}, either $s\in\set{0,1}^m$ --- in which case the proof is complete --- or $V$ is critical.
By Lemma~\ref{lemma:1}, there can be at most one index $i$ for which $s_i\notin\set{0,1}$.
By Lemma~\ref{lemma:3}, for critical $V$, the existence of one single such index $i$ implies the presence of some supercritical subspace, contradicting the main hypothesis of Proposition~\ref{prop:0or1}.  
Thus again, $s\in\set{0,1}^m$. 
\end{proof}

\subsection{Factorization of HBL data}
\begin{notation}
\label{not:splittingspaces}
Suppose $V,V'$ are finite-dimensional vector spaces over a field $\FF$, and $\phi\from V\to V'$ is an $\FF$--linear map.
For any subspace $W \subim V$, $\restr{\phi}{W}\from W \to \phi(W)$ denotes the restriction of $\phi$ to $W$, also a $\FF$--linear map.
$V/W$ denotes the quotient of $V$ by $W$, a finite-dimensional vector space. 
%
Thus $x+W=x'+W$ if and only if $x-x'\in W$. 
%
Every subspace of $V/W$ can be written as $U/W$ for some $W \subim U \subim V$.

The quotient space $V'/\phi(W)$, and the quotient linear map $[\phi]\from V/W \ni x+W \mapsto \phi(x) + \phi(W) \in V'/\phi(W)$,
are likewise defined. 
\end{notation}
%
%

In this quotient situation,
it is elementary that for any $U/W \subim V/W$, $[\phi](U/W)=\phi(U)/\phi(W)$.

\begin{definition}
Let $\tup{V,\tup{V_j},\tup{\phi_j}}$ be a vector space HBL datum.
To any subspace $W\subim V$ are associated the two vector space HBL data
\begin{equation}
\left\{
\begin{aligned} \fv_W &=\tup{W,\tup{\phi_j(W)},\tup{\restr{\phi_j}{W}}} 
\\ \fv_{V/W} &= \tup{V/W,\tup{V_j/\phi_j(W)},\tup{[\phi_j]}}. 
\end{aligned} \right. \end{equation}
\end{definition}

\begin{lemma} \label{lemma:factorization1}
Let $\fv=\tup{V,\tup{V_j},\tup{\phi_j}}$ be  a vector space HBL datum. For any subspace $W\subim V$, 
%
\begin{equation}
\PP(\fv_W)\,\cap\,\PP(\fv_{V/W})
\subseteq \PP(\fv).
\end{equation}
\end{lemma}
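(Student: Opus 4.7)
The plan is to take an arbitrary subspace $U \subim V$ and split the desired inequality $\Dim{U} \le \sum_j s_j \Dim{\phi_j(U)}$ into two pieces that are already supplied by the hypothesis. The natural decomposition comes from the two subspaces $U \cap W \subim W$ and $(U+W)/W \subim V/W$, to which one applies $s\in\PP(\fv_W)$ and $s\in\PP(\fv_{V/W})$, respectively.

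First I would use the second isomorphism theorem for vector spaces, $U/(U\cap W)\cong (U+W)/W$, to obtain the dimension identity
\[
\Dim{U} = \Dim{U\cap W} + \Dim{(U+W)/W}.
\]
Next I would establish the corresponding inequality for each $\phi_j$, namely
\[
\Dim{\phi_j(U)} \ge \Dim{\phi_j(U\cap W)} + \Dim{[\phi_j]((U+W)/W)}.
\]
For this I would combine two ingredients: the identification $[\phi_j]((U+W)/W) = \phi_j(U+W)/\phi_j(W) \cong \phi_j(U)/(\phi_j(U)\cap \phi_j(W))$ (which yields the equality $\Dim{\phi_j(U)} = \Dim{\phi_j(U)\cap\phi_j(W)} + \Dim{[\phi_j]((U+W)/W)}$), together with the elementary inclusion $\phi_j(U\cap W) \subim \phi_j(U)\cap \phi_j(W)$, which gives $\Dim{\phi_j(U\cap W)} \le \Dim{\phi_j(U)\cap \phi_j(W)}$.

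With these in hand I would assemble the final bound. Since each $s_j\ge 0$, multiplying through and summing over $j$ gives
\[
\sum_{j=1}^m s_j \Dim{\phi_j(U)} \ \ge\ \sum_{j=1}^m s_j\Dim{\phi_j(U\cap W)} + \sum_{j=1}^m s_j \Dim{[\phi_j]((U+W)/W)}.
\]
Applying the hypothesis $s\in\PP(\fv_W)$ to the subspace $U\cap W$ of $W$ and $s\in\PP(\fv_{V/W})$ to the subspace $(U+W)/W$ of $V/W$ bounds the right-hand side below by $\Dim{U\cap W} + \Dim{(U+W)/W} = \Dim{U}$, which is exactly \eqref{subcriticalhypothesisfield} for $U$ in $\fv$.

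The only nontrivial step is verifying the inequality $\Dim{\phi_j(U)} \ge \Dim{\phi_j(U\cap W)} + \Dim{[\phi_j]((U+W)/W)}$; the subtlety is that $\phi_j(U\cap W)$ may be a proper subspace of $\phi_j(U)\cap \phi_j(W)$ when $\phi_j$ is not injective, so one genuinely needs an inequality rather than an identity here. Once that inclusion is pinned down, the remainder of the argument is a routine bookkeeping of dimension identities.
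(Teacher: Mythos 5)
Your proposal is correct and matches the paper's proof essentially step for step: both decompose $\Dim{U} = \Dim{U\cap W} + \Dim{(U+W)/W}$, apply the hypotheses to the two pieces, and then recombine via the inclusion $\phi_j(U\cap W)\subseteq \phi_j(U)\cap\phi_j(W)$ together with the dimension formula $\Dim{A}+\Dim{B}=\Dim{A+B}+\Dim{A\cap B}$. The only difference is cosmetic bookkeeping (you isolate the per-$j$ inequality $\Dim{\phi_j(U)}\ge\Dim{\phi_j(U\cap W)}+\Dim{[\phi_j]((U+W)/W)}$ before summing, whereas the paper carries the sums through a chain of (in)equalities).
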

\begin{proof}
Consider any subspace $U\subim V$ and any $s\in \PP(\fv_W) \cap \PP(\fv_{V/W})$.
%
Then
\begin{align*}
\Dim{U} &= \Dim{(U+W)/W} + \Dim{U\cap W} \\
&\le \sum_{j=1}^m s_j \Dim{[\phi_j]((U+W)/W))}  + \sum_{j=1}^m s_j \Dim{\phi_j(U\cap W)} \\
&= \sum_{j=1}^m s_j \Dim{\phi_j(U+W)/\phi_j(W)}  + \sum_{j=1}^m s_j \Dim{\phi_j(U\cap W)} \\
&= \sum_{j=1}^m s_j \lt(\Dim{\phi_j(U+W)} - \Dim{\phi_j(W)} \rt) + \sum_{j=1}^m s_j \Dim{\phi_j(U\cap W)} \\
&= \sum_{j=1}^m s_j \lt(\Dim{\phi_j(U)+\phi_j(W)} + \Dim{\phi_j(U\cap W)} - \Dim{\phi_j(W)}\rt) \\
&\le \sum_{j=1}^m s_j \lt(\Dim{\phi_j(U)+\phi_j(W)} + \Dim{\phi_j(U)\cap \phi_j(W)} - \Dim{\phi_j(W)}\rt) \\
&=  \sum_{j=1}^m s_j \Dim{\phi_j(U)}.
\end{align*}
The last inequality is a consequence of the inclusions
%
%
$\phi_j(U\cap W) \subseteq \phi_j(U)\cap \phi_j(W)$. 
The last equality uses
the relation $\Dim{A}+\Dim{B} = \Dim{A+B}+\Dim{A\cap B}$, which holds for any subspaces $A,B$ of a vector space.
Thus $U$ is subcritical with respect to $s$, so $s\in\PP(\fv)$.
\end{proof}
\begin{lemma} \label{lemma:factorization}
Let $\fv=\tup{V,\tup{V_j},\tup{\phi_j}}$ be a vector space HBL datum. 
Let $s\in[0,1]^m$ and let $W\subim V$ be a subspace of $V$.
%
If $W$ is critical with respect to $s$ then
\begin{equation} s\in\PP(\fv) \ \Longleftrightarrow\ s\in  \PP(\fv_W)\,\cap\,\PP(\fv_{V/W}).  \end{equation}
%
\end{lemma}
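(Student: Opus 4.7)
The plan is to prove the two implications separately. The reverse direction $s \in \PP(\fv_W) \cap \PP(\fv_{V/W}) \Rightarrow s \in \PP(\fv)$ requires nothing beyond what has just been established: it is a special case of Lemma~\ref{lemma:factorization1}, which was proved without any criticality hypothesis on $W$. So the real content lies in the forward direction, and the criticality of $W$ is used only there.

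For the forward direction, assume $s\in\PP(\fv)$, so that every subspace of $V$ is subcritical, and that $\Dim{W} = \sum_{j=1}^m s_j\Dim{\phi_j(W)}$. I would verify membership in $\PP(\fv_W)$ and $\PP(\fv_{V/W})$ separately. For $\fv_W$, any subspace $U \subim W$ is in particular a subspace of $V$, and the restricted maps $\restr{\phi_j}{W}$ send $U$ to $\phi_j(U)$, so the inequality $\Dim{U}\le \sum_j s_j\Dim{\phi_j(U)}$ for $\fv_W$ is exactly the hypothesis inherited from $\PP(\fv)$. No use of criticality is needed here.

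The interesting step is to show $s\in\PP(\fv_{V/W})$. Every subspace of $V/W$ has the form $U/W$ for some $W\subim U\subim V$, and one has $\Dim{U/W} = \Dim{U}-\Dim{W}$ and $\Dim{[\phi_j](U/W)} = \Dim{\phi_j(U)/\phi_j(W)} = \Dim{\phi_j(U)} - \Dim{\phi_j(W)}$, as noted just before Lemma~\ref{lemma:factorization1}. Thus the required inequality
\[ \Dim{U/W} \le \sum_{j=1}^m s_j\,\Dim{[\phi_j](U/W)} \]
is equivalent, after rearranging, to
\[ \Dim{U} \le \sum_{j=1}^m s_j\,\Dim{\phi_j(U)} + \Bigl(\Dim{W} - \sum_{j=1}^m s_j\,\Dim{\phi_j(W)}\Bigr). \]
The criticality of $W$ makes the parenthesized quantity vanish, reducing the claim to the subcriticality of $U$ itself in $\fv$, which is part of the hypothesis $s\in\PP(\fv)$.

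Since this reduction uses nothing beyond elementary dimension identities for quotient spaces and the criticality equality for $W$, there is no substantive obstacle to overcome; the only thing to be careful about is the bookkeeping that links $\Dim{[\phi_j](U/W)}$ with $\Dim{\phi_j(U)}-\Dim{\phi_j(W)}$ (which holds because $\phi_j(W)\subseteq \phi_j(U)$ whenever $W\subseteq U$). Once that identification is made, the criticality hypothesis is exactly what is needed to cancel the error term, and the forward direction drops out.
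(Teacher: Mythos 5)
Your proposal is correct and takes essentially the same route as the paper's own proof: the reverse implication is handed off to Lemma~\ref{lemma:factorization1}, membership in $\PP(\fv_W)$ is inherited directly since subspaces of $W$ are subspaces of $V$, and for $\PP(\fv_{V/W})$ you use the quotient dimension identities together with the criticality equality to cancel the $W$-terms. The only cosmetic difference is that you package the cancellation as an ``error term'' vanishing, whereas the paper writes out the chain of equalities and the single inequality directly.
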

\begin{proof}
With Lemma~\ref{lemma:factorization1} in hand, it remains to show that 
if $s\in\PP(\fv)$ then  $ s\in  \PP(\fv_W)\,\cap\,\PP(\fv_{V/W})$.
Any subspace $U \subim W$ is also a subspace of $V$.
$U$ is subcritical with respect to $s$ when regarded as a subspace of $W$, if and only if $U$ is subcritical when regarded as a subspace of $V$.
So $s\in\PP(\fv_W)$.

Next consider any subspace $U/W \subim V/W$. We have $W \subim U \subim V$ and $\Dim{U/W} = \Dim{U}-\Dim{W}$.
%
%
Moreover, 
\[ \Dim{[\phi_j](U/W)} = \Dim{\phi_j(U)/\phi_j(W)} = \Dim{\phi_j(U)}-\Dim{\phi_j(W)}.  \]
Therefore since $\Dim{W} = \sum_{j=1}^m s_j\Dim{\phi_j(W)}$,
\begin{multline*}
\Dim{U/W} = \Dim{U}-\Dim{W}
\le \sum_{j=1}^m s_j \Dim{\phi_j(U)} - \sum_{j=1}^m s_j\Dim{\phi_j(W)}\\ 
= \sum_{j=1}^m s_j \lt(\Dim{\phi_j(U)} - \Dim{\phi_j(W)}\rt)
= \sum_{j=1}^m s_j \Dim{[\phi_j](U/W)}
\end{multline*}
by the subcriticality of $U$, which holds because $s\in\PP(\fv)$.
Thus any subspace $U/W \subim V/W$ is subcritical with respect to $s$, so $s\in \PP(\fv_{V/W})$, as well.
\end{proof}

\subsection{Conclusion of proof of Theorem~\ref{thm:field}} \label{sec:thm:field-proof}
In order to prove Theorem~\ref{thm:field} for a vector space HBL datum $\tup{V,\tup{V_j},\tup{\phi_j}}$, we argue
by induction on the dimension of the ambient vector space $V$.
If $\Dim{V}=0$ then $V$ has a single element, and the inequality \eqref{BLfield} is trivially valid.

To establish the inductive step, consider any extreme point $s$ of $\PP$.
According to Proposition~\ref{prop:0or1}, there are two cases which must be analyzed.
We begin with the case in which there exists a critical subspace $\set{0} \subpr W \subpr V$. 
We assume that Theorem~\ref{thm:field} holds for all HBL data for which the ambient vector space has strictly smaller dimension than is the case for the given datum.

\begin{lemma} \label{lemma:criticalW}
Let $\fv=\tup{V,\tup{V_j},\tup{\phi_j}}$ be a vector space HBL datum, and let $s\in\PP(\fv)$.
Suppose that there exists a subspace $\set{0} \subpr W \subpr V$ that is critical with respect to $s$.
Then the inequality \eqref{BLfield} holds for this $s$.
\end{lemma}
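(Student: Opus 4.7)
The plan is a standard fibering argument over cosets of $W$, combining the HBL inequalities for the sub-datum $\fv_W$ and the quotient-datum $\fv_{V/W}$, both of which are available by induction on $\Dim{V}$ because $\{0\} \subpr W \subpr V$ forces $\Dim{W}, \Dim{V/W} < \Dim{V}$.

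First I would invoke Lemma~\ref{lemma:factorization}: since $W$ is critical with respect to $s$, the hypothesis $s \in \PP(\fv)$ implies $s \in \PP(\fv_W) \cap \PP(\fv_{V/W})$. The inductive hypothesis then provides
\[
\sum_{w \in W} \prod_{j=1}^m g_j(\restr{\phi_j}{W}(w)) \le \prod_{j=1}^m \norm{g_j}_{1/s_j}
\qquad\text{and}\qquad
\sum_{\bar x \in V/W} \prod_{j=1}^m h_j([\phi_j](\bar x)) \le \prod_{j=1}^m \norm{h_j}_{1/s_j}
\]
for all nonnegative $g_j \from \phi_j(W) \to [0,\infty)$ and $h_j \from V_j/\phi_j(W) \to [0,\infty)$.

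Next I would partition $V$ into cosets of $W$: fix a section $\bar x \mapsto x_{\bar x}$ of the quotient map $V \to V/W$, and for each coset $\bar x \in V/W$ define $g_j^{\bar x} \from \phi_j(W) \to [0,\infty)$ by $g_j^{\bar x}(y) \ceq f_j(\phi_j(x_{\bar x}) + y)$. Then $\phi_j(x_{\bar x} + w) = \phi_j(x_{\bar x}) + \phi_j(w)$, so applying the $\fv_W$ inequality to each fiber yields
\[
\sum_{x \in V} \prod_{j=1}^m f_j(\phi_j(x))
= \sum_{\bar x \in V/W} \sum_{w \in W} \prod_{j=1}^m g_j^{\bar x}(\phi_j(w))
\le \sum_{\bar x \in V/W} \prod_{j=1}^m \norm{g_j^{\bar x}}_{1/s_j}.
\]
Now define $F_j \from V_j/\phi_j(W) \to [0,\infty)$ by $F_j(\bar v) \ceq \bigl(\sum_{z \in \bar v} f_j(z)^{1/s_j}\bigr)^{s_j}$ when $s_j > 0$, and $F_j(\bar v) \ceq \sup_{z \in \bar v} f_j(z)$ when $s_j = 0$. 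A direct computation shows $\norm{g_j^{\bar x}}_{1/s_j} = F_j([\phi_j](\bar x))$ (the coset $\phi_j(x_{\bar x}) + \phi_j(W)$ is precisely $[\phi_j](\bar x)$ and is independent of the choice of representative $x_{\bar x}$).

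Applying the $\fv_{V/W}$ inequality to $(F_j)$, the previous display is bounded by $\prod_{j=1}^m \norm{F_j}_{1/s_j}$. Finally, the identity $\norm{F_j}_{1/s_j} = \norm{f_j}_{1/s_j}$ follows by unraveling definitions: in the $s_j > 0$ case, summing $F_j(\bar v)^{1/s_j}$ over $\bar v \in V_j/\phi_j(W)$ reassembles the full sum $\sum_{z \in V_j} f_j(z)^{1/s_j}$ since $V_j$ is the disjoint union of its $\phi_j(W)$-cosets; in the $s_j = 0$ case, taking a supremum over cosets of suprema within each coset gives the supremum over $V_j$. Chaining the three inequalities and identities delivers \eqref{BLfield}. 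I do not expect a genuine obstacle here; the only care required is tracking the convention for $s_j=0$ (so that $1/s_j = \infty$ and the $\ell^{1/s_j}$ norm becomes a supremum), and verifying that the definition of $F_j$ is independent of the choice of coset representative $x_{\bar x}$, both of which are immediate.
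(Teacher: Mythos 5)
Your proposal is correct and follows essentially the same fibering-over-cosets argument as the paper: invoke Lemma~\ref{lemma:factorization} to put $s$ in both $\PP(\fv_W)$ and $\PP(\fv_{V/W})$, apply the inductive hypothesis on each fiber $x_{\bar x}+W$ to bound the sum by $\sum_{\bar x}\prod_j F_j([\phi_j](\bar x))$, then apply it once more to the quotient datum and close with the norm identity $\norm{F_j}_{1/s_j}=\norm{f_j}_{1/s_j}$. The only difference is cosmetic: the paper first eliminates any index with $s_k=0$ by a preliminary reduction (pulling out an $\ell^\infty$ factor and dropping that coordinate), so that the formula $F_j(x+W_j)=\bigl(\sum_{y\in W_j} f_j(y+x)^{1/s_j}\bigr)^{s_j}$ always makes sense, whereas you keep those indices and define $F_j$ by a supremum over the coset when $s_j=0$; both handle the $\ell^\infty$ case correctly and yield the same chain of inequalities.
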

\begin{proof}
Consider the inequality \eqref{BLfield} for some $s\in[0,1]^m$.
We may assume that none of the exponents $s_j$ equals zero. 
For if $s_k=0$, then $f_k(\phi_k(x)) \le \norm{f_k}_{1/s_k}$ for all $x$, and therefore 
\[\sum_{x\in V} \prod_{j=1}^m f_j(\phi_j(x)) \le \norm{f_k}_{1/s_k} \cdot \sum_{x\in V} \prod_{j\ne k} f_j(\phi_j(x)).\] 
If $\norm{f_k}_{1/s_k}=0$, then \eqref{BLfield} holds with both sides $0$. 
Otherwise we divide by $\norm{f_k}_{1/s_k}$ to conclude that $s\in\PP(\fv)$ if and only if $\tup{s_j}_{j \ne k}$ 
belongs to the polytope associated to the vector space HBL datum $\tup{V,\tup{V_j}_{j\ne k},\tup{\phi_j}_{j\ne k}}$.
Thus the index $k$ can be eliminated. 
This reduction can be repeated to remove all indices that equal zero.

%
Let $W_j \ceq \phi_j(W)$.
By Lemma~\ref{lemma:factorization}, $s\in\PP\tup{W,\tup{W_j},\tup{\restr{\phi_j}{W}}}$.
Therefore by the inductive hypothesis, one of the inequalities in \eqref{BLfield} is
\begin{equation} \label{Winequality}
\sum_{x\in W} \prod_{j=1}^m f_j(\phi_j(x))\le \prod_{j=1}^m \norm{\restr{f_j}{W_j}}_{1/s_j}.
\end{equation}

%

Define $F_j \from V_j/W_j\to[0,\infty)$ to be the function 
\begin{equation*}
F_j(x+W_j) = \Big(\sum_{y\in W_j} f_j(y+x)^{1/s_j}\Big)^{s_j}.
\end{equation*}
This quantity is a function of the coset $x+W_j$ alone, rather than of $x$ itself, because for any $z\in W_j$,
\begin{equation*}
\sum_{y\in W_j} f_j(y+(x+z))^{1/s_j} =\sum_{y\in W_j} f_j(y+x)^{1/s_j}
\end{equation*}
by virtue of the substitution $y+z\mapsto y$.
Moreover,
\begin{equation} \label{Fjnorm} 
\norm{F_j}_{1/s_j} = \norm{f_j}_{1/s_j}.
\end{equation}
To prove this, choose one element $x\in V_j$ for each coset $x+W_j\in V_j/W_j$.
Denoting by $X$ the set of all these representatives, 
\begin{equation*}
\norm{F_j}_{1/s_j}^{1/s_j}  
=\sum_{x\in X} \sum_{y\in W_j} f_j(y+x)^{1/s_j} 
=\sum_{z\in V_j}  f_j(z)^{1/s_j}
\end{equation*}
because the map $X\times W_j\ni (x,y)\mapsto x+y\in V_j$ is a bijection.

The inductive bound \eqref{Winequality} can be equivalently written in the more general form
\begin{equation} \label{Winequality2}
\sum_{x\in W} \prod_{j=1}^m f_j(\phi_j(x+y))\le \prod_{j=1}^m F_j([\phi_j](y+W))
\end{equation}
for any $y\in V$, by applying \eqref{Winequality} to $\tup{\hat{f}_j}$ where $\hat{f}_j(z) = f_j(z + \phi_j(y))$.

Denote by $Y\subseteq V$ a set of representatives of the cosets $y+W\in V/W$, and identify $V/W$ with $Y$. 
Then
\begin{equation*} 
\sum_{x\in V} \prod_{j=1}^m f_j(\phi_j(x))
= \sum_{y\in Y} \sum_{x\in W} \prod_{j=1}^m f_j(\phi_j(y+x))
\le \sum_{y\in Y} \prod_{j=1}^m F_j([\phi_j](y+W)) 
\end{equation*}
by \eqref{Winequality2}.

By \eqref{Fjnorm}, it suffices to show that
\begin{equation} \label{quotientBL}
\sum_{y\in Y} \prod_{j} F_j([\phi_j](y+W))\le \prod_j\norm{F_j}_{1/s_j}  \qquad \text{for all functions } 0 \le F_j \in \ell^{1/s_j}(V_j/W_j).
\end{equation}
%
%
This is 
a set of inequalities of exactly the form \eqref{BLfield}, with $\fv$ replaced by 
$\fv_{V/W}=\tup{V/W,\tup{V_j/W_j},\tup{[\phi_j]}}$.
By Lemma~\ref{lemma:factorization}, $s\in\PP(\fv_{V/W})$, and since $\Dim{V/W}<\Dim{V}$, we conclude directly from the inductive hypothesis that \eqref{quotientBL} holds, concluding the proof of Lemma~\ref{lemma:criticalW}.
\end{proof}
%

According to Proposition~\ref{prop:0or1}, in order to complete the proof of Theorem~\ref{thm:field}, it remains 
only to analyze the case in which the extreme point $s$ is an element of $\set{0,1}^m$. 
Let $K=\set{k: s_k=1}$. 
Consider $W=\bigcap_{k\in K} \Kernel{\phi_k}$. 
Since $W$ is subcritical by hypothesis,
\begin{equation*}
\Dim{W}\le \sum_{j=1}^m s_j\Dim{\phi_j(W)} = \sum_{k\in K}\Dim{\phi_k(W)}=0 
\end{equation*}
so $\Dim{W}=0$, that is, $W=\set{0}$.
Therefore the map $x\mapsto\tup{\phi_k(x)}_{k\in K}$ from $V$ to the Cartesian product $\prod_{k\in K} V_k$ is injective. 

For any $x\in V$, 
\begin{equation*}
\prod_{j=1}^m f_j(\phi_j(x))
\le \prod_{k\in K} f_k(\phi_k(x))\prod_{i\notin K} \norm{f_i}_\infty
= \prod_{k\in K} f_k(\phi_k(x))\prod_{i\notin K} \norm{f_i}_{1/s_i}
\end{equation*}
since $s_i=0$ for all $i\notin K$.
Thus it suffices to prove that
\begin{equation*}
\sum_{x\in V}\prod_{k\in K} f_k(\phi_k(x)) \le \prod_{k\in K} \norm{f_k}_1.
\end{equation*}
This is a special case of the following result.
\begin{lemma} \label{lemma:exponentsallone}
Let $V$ be any finite-dimensional vector space over $\FF$.
Let $K$ be a finite index set, and for each $k\in K$, let $\phi_k$ be an $\FF$--linear map from $V$ to a finite-dimensional vector space $V_k$.
If $\bigcap_{k\in K}\Kernel{\phi_k}=\set{0}$ then for all functions $f_k\from V_k\to[0,\infty)$, 
\begin{equation*}
\sum_{x\in V} \prod_{k\in K} f_k(\phi_k(x))\le \prod_{k\in K}\norm{f_k}_1.
\end{equation*}
\end{lemma}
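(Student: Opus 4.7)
The plan is to observe that the hypothesis $\bigcap_{k\in K}\Kernel{\phi_k}=\{0\}$ is equivalent to the injectivity of the product map
\[ \Phi \from V \to \prod_{k\in K} V_k, \qquad \Phi(x) = (\phi_k(x))_{k\in K}, \]
and then exploit this injectivity directly, with no induction or interpolation required.

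The first step is to note that since $\Phi$ is injective and all $f_k$ are nonnegative, each term of the left-hand sum appears at most once when we re-index by $\Phi(V) \subseteq \prod_k V_k$:
\[
\sum_{x\in V} \prod_{k\in K} f_k(\phi_k(x))
= \sum_{(y_k)\in \Phi(V)} \prod_{k\in K} f_k(y_k)
\le \sum_{(y_k)\in \prod_k V_k} \prod_{k\in K} f_k(y_k).
\]
The second step is to factor the full Cartesian-product sum by Tonelli's theorem for counting measure (which is valid because the summands are nonnegative, and we may reduce to the countable supports of the $f_k$ when each $\|f_k\|_1 < \infty$; the case where some $\|f_k\|_1 = \infty$ is trivial under the convention stated just after Theorem~\ref{thm:field}):
\[
\sum_{(y_k)\in \prod_k V_k} \prod_{k\in K} f_k(y_k)
= \prod_{k\in K} \sum_{y_k \in V_k} f_k(y_k)
= \prod_{k\in K} \norm{f_k}_1.
\]
Chaining these two inequalities yields the conclusion.

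There is essentially no obstacle: the lemma is a direct application of injectivity plus the product formula for $\ell^1$ norms on a Cartesian product, and the only mild subtlety is justifying the sum manipulations when the ambient sets $V_k$ may be uncountable (handled by restricting to the countable supports of the $f_k$, or by interpreting the sums as suprema over finite subsums of nonnegative quantities).
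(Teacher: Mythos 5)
Your proof is essentially the same as the paper's: both reduce the claim to the injectivity of $\Phi(x) = (\phi_k(x))_{k\in K}$, then compare the left-hand sum (a sum over the image $\Phi(V)$) termwise with the expansion of $\prod_k \|f_k\|_1$ as a sum over the full Cartesian product $\prod_k V_k$. The only cosmetic difference is that you explicitly invoke Tonelli for the product factorization and note the countability/support reduction, which the paper treats as immediate.
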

\begin{proof}
Define $\Phi\from V\to\prod_{k\in K} V_k$ by $\Phi(x) = \tup{\phi_k(x)}_{k\in K}$.
The hypothesis $\bigcap_{k\in K}\Kernel{\phi_k}=\set{0}$ is equivalent to $\Phi$ being injective.
The product $\prod_{k\in K} \norm{f_k}_1$ can be expanded as the sum of products
\begin{equation*}
\sum_{y} \prod_{k\in K} f_k(y_k)
\end{equation*}
where the sum is taken over all $y=\tup{y_k}_{k \in K}$ belonging to the Cartesian product $\prod_{k\in K}V_k$.
The quantity of interest, 
\begin{equation*}
\sum_{x\in V} \prod_{k\in K} f_k(\phi_k(x)),
\end{equation*}
is likewise a sum of such products.
Each term of the latter sum appears as a term of the former sum, and by virtue of the injectivity of $\Phi$, appears only once.
Since all summands are nonnegative, the former sum is greater than or equal to the latter. 
Therefore
\begin{equation*}
\prod_{k\in K}\norm{f_k}_1 
= \sum_{y} \prod_{k\in K} f_k(y_k) 
\ge \sum_{x\in V} \prod_{k\in K} f_k(\phi_k(x)).
\end{equation*}
\end{proof}

As mentioned above, necessity of the condition
\eqref{subcriticalhypothesisfield} for the inequality \eqref{BLfield} 
in the case $\FF=\QQ$ can be deduced from the corresponding necessity in Theorem~\ref{thm:2new}, by clearing denominators.
First, we identify $V$ and $V_j$ with $\QQ^d$ and $\QQ^{d_j}$ and let $E$ be any nonempty finite subset of $\QQ^d$.
Let $\hat \phi_j \from \QQ^d \to \QQ^{d_j}$ be the linear map represented by the matrix of $\phi_j$ multiplied by the lowest common denominator of its entries, i.e., an integer matrix.
Likewise, let $\hat E$ be the set obtained from $E$ by multiplying each point by the lowest common denominator of the coordinates of all points in $E$.
Then by linearity, \[ |\hat E| = |E| \le \prod_{j=1}^m |\phi_j(E)|^{s_j} = \prod_{j=1}^m |\hat \phi_j(\hat E)|^{s_j}. \]
Recognizing $\tup{\ZZ^d,\tup{\ZZ^{d_j}},\tup{\restr{\hat \phi_j}{\ZZ^d}}}$ as an Abelian group HBL datum, 
we conclude \eqref{subcriticalhypothesis2} for this datum from the implication \eqref{BL} $\Rightarrow$ \eqref{subcriticalhypothesis2} of Theorem~\ref{thm:2new}.
According to Lemma~\ref{lem:PZequalsPQ}, \eqref{subcriticalhypothesisfield} holds for the vector space HBL datum $\tup{\QQ^d,\tup{\QQ^{d_j}},\tup{\hat \phi_j}}$; our conclusion follows since $\Dim{\hat \phi_j (W)} = \Dim{\phi_j(W)}$ for any $W \subim \QQ^d$. 

It remains to show that
\eqref{subcriticalhypothesisfield} is necessary for \eqref{BLfield} 
in the case of a finite field $\FF$. 
Whereas the above reasoning required only the validity of \eqref{BLfield;sets} in the weakened form $|E|\le C\prod_{j=1}^m|\phi_j(E)|^{s_j}$ for some constant $C<\infty$ independent of $E$ (see proof of necessity for Theorem~\ref{thm:2new}), now the assumption that this holds with $C=1$ becomes essential.
Let $W$ be any subspace of $V$.
Since $|\FF|<\infty$ and $W$ has finite dimension over $\FF$, $W$ is a finite set and the hypothesis \eqref{BLfield;sets} can be applied with $E=W$.
Therefore $|W|\le\prod_{j=1}^m|\phi_j(W)|^{s_j}$.
This is equivalent to
\[ |\FF|^{\Dim{W}} \le \prod_{j=1}^m |\FF|^{s_j\Dim{\phi_j(W)}}, \]
so since $|\FF|\ge 2$, by taking base--$|\FF|$ logarithms of both sides, we obtain $\Dim{W} \le \sum_{j=1}^m s_j\Dim{\phi_j(W)}$, as was to be shown.
\qed
\section{An algorithm that computes the polytope $\PP$} \label{sec:computeP}
From the perspective of potential applications to communication bounds and the analysis of algorithms,
it is desirable to compute the convex  polytope $\PP=\PP(\fg)$  associated to an Abelian group
HBL datum.

%
We have already shown in Lemma~\ref{lem:PZequalsPQ} that $\PP$ is unchanged when 
the groups $\ZZ^d$ and $\ZZ^{d_j}$ are embedded in the natural way
into the vector spaces $\QQ^d$ and $\QQ^{d_j}$ over $\QQ$ and the homomorphisms $\phi_j$ are viewed as $\QQ$--linear maps.
Thus $\PP$ is identical to the polytope  defined by the inequalities
\begin{equation} \label{eq:DimV}
\Dim{V} \leq \sum_{j=1}^m s_j \Dim{\phi_j(V)} \qquad \text{for all subspaces } V \subim \QQ^d
\end{equation}
and $0\le s_j\le 1$ for all indices $j$. 
Indeed, \eqref{eq:DimV} is the hypothesis \eqref{subcriticalhypothesisfield} of Theorem~\ref{thm:field} in the case $\FF=\QQ$.

We will show how to compute $\PP$ in the case $\FF = \QQ$. 
Throughout the remainder of this section, $V$ and $V_j$ denote finite-dimensional vector spaces over $\QQ$, and $\phi_j$ denotes a $\QQ$--linear map.
The reasoning presented below applies to any countable field $\FF$, provided that elements of $\FF$ and the field operations are computable.

%
%
%
%

%
\begin{remark} \label{rmk:V07}
The algorithm described below relies on a search of a list of all subspaces of $V$.
A similar algorithm was sketched, less formally, in \cite{Valdimarsson07} for computing the corresponding polytope in \cite[Theorem~2.1]{BCCT10}. 
%
That algorithm searches a smaller collection of subspaces, namely the lattice generated by the kernels of $\phi_1,\ldots,\phi_m$ 
under the operations of intersection and pairwise sum of subspaces.
In a forthcoming sequel to this work, 
we will show that it suffices to search a corresponding lattice in our situation.
Searching this lattice would make the algorithm below more efficient, in principle. But our goal here is to
prove decidability, leaving issues of efficiency for future work.
A minor point is that this modification also allows relaxation of the requirement that $\FF$ be countable. 

\end{remark}

The proof of Theorem~\ref{thm:decision} is built upon several smaller results.

\begin{lemma} \label{lemma:enumeratesubspaces}
There exists an algorithm that takes as input a finite-dimensional vector space $V$ over 
$\QQ$, and returns a list of its subspaces.
More precisely, this algorithm takes as input a finite-dimensional vector space $V$ and a positive integer $N$, and returns as output the first $N$ elements $W_i$ of a list $\tup{W_1,W_2,\ldots}$ of all subspaces of $V$.
This list is independent of $N$.
Each subspace $W$ is expressed as a finite sequence $\tup{d;w_1,\ldots,w_d}$ where $d=\Dim{W}$ and $\set{w_i}$ is a basis for $W$.
\end{lemma}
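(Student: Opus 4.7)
The plan is to exploit the uniqueness of reduced row echelon form (RREF) to enumerate subspaces of $V$ canonically and without repetition. Using the presentation of $V$, identify $V$ with $\QQ^d$ where $d=\Dim{V}$, so that vectors admit exact rational representations and linear algebra operations are computable.

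First, I would fix a standard computable bijection $\sigma\from\NN\to\QQ$ (for instance via Cantor pairing of $\ZZ\times\NN^+$ restricted to fractions in lowest terms), which extends by diagonalization to computable enumerations of $\QQ^e$ for every integer $e\ge 0$.

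Second, I would parametrize subspaces by their RREF representations. Every subspace $W\subim\QQ^d$ of dimension $k$ is represented uniquely by a $k\times d$ RREF matrix whose rows form a basis of $W$. Such a matrix is specified by: (i) a choice of $k$ pivot columns $1\le j_1<\cdots<j_k\le d$, of which there are only $\binom{d}{k}$; and (ii) the values of the ``free'' entries (those in non-pivot positions not forced to be $0$ or $1$ by the pivot pattern), which form an unconstrained tuple in $\QQ^e$ for an integer $e$ determined by the pattern. Uniqueness of RREF ensures that distinct pairs of (pattern, free-entry tuple) yield distinct subspaces, and that every subspace is obtained this way.

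Third, I would assemble the enumeration. For each $k\in\set{0,1,\ldots,d}$ and each pivot pattern, enumerate $\QQ^e$ computably to obtain all RREFs with that pattern; the total number of $(k,\text{pattern})$ combinations is finite, so interleaving these finitely many streams by dovetailing produces a single computable list $\tup{M_1,M_2,\ldots}$ of all RREF matrices. From each $M_i$ read off its rows to obtain the required output tuple $\tup{k;\,w_1,\ldots,w_k}$ for $W_i$. On input $(V,N)$, halt after $N$ entries have been output; the underlying infinite list depends only on $V$, not on $N$, as required.

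No step here is a serious obstacle: exact arithmetic in $\QQ$ is computable, the parametrization of RREFs by pivot patterns and free entries is elementary, and the uniqueness of RREF is standard linear algebra. The one point that demands care is the uniqueness itself, which is what allows the enumeration to avoid duplicates; an alternative approach enumerating all finite sequences of vectors in $\QQ^d$ and spanning them via Gaussian elimination would also work, but would require an explicit equality test on subspaces (which is decidable but less clean).
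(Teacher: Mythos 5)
Your proof is correct, and it takes a genuinely different route from the paper's. The paper's proof is far more brute-force: it simply enumerates all finite subsets of $V=\QQ^d$ of cardinality at most $\Dim V$ (via a computable enumeration of $\QQ^d$), tests each for linear independence, discards the dependent ones, and outputs the rest. Crucially, the paper then remarks that the list is \emph{not} required to be free of redundancies, so the same subspace will typically appear infinitely often under many different bases; this is harmless for the downstream use in Lemma~\ref{lemma:musthalt}, where one only needs that every subspace eventually appears. Your approach instead uses the uniqueness of reduced row echelon form to produce a canonical representative for each subspace, parametrizing RREF matrices by pivot pattern and free entries and dovetailing the finitely many resulting streams. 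This buys a duplicate-free enumeration at the cost of the (standard but not entirely trivial) observation that the map from $(\text{pivot pattern}, \text{free entries})$ to subspaces is a bijection onto all of $\opGr(k,\QQ^d)$ for each $k$. Both arguments are valid; the paper's is shorter and makes explicit that redundancy is acceptable, while yours gives a cleaner combinatorial description of the subspace lattice that would be preferable if one later cared about efficiency or wanted to count.

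(One tiny point worth noting on the paper's side: its phrasing ``nonempty subsets'' technically omits the zero subspace, whose basis is the empty set; this is immaterial since $W=\{0\}$ contributes only the tautological inequality $0\le 0$. Your parametrization handles $k=0$ cleanly.)

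\newcommand{\opGr}{\mathrm{Gr}}
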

\begin{proof}
Generate a list of all nonempty subsets of $V$ having at most $\Dim{V}$ elements.
Test each subset for linear independence, and discard all that fail to be independent.
Output a list of those that remain.
\end{proof}
We do not require this list to be free of redundancies.

\begin{lemma} \label{lemma:enumeratevertices}
For any positive integer $m$, there exists an algorithm that takes as input a finite set of linear inequalities over $\ZZ$ for $s\in[0,1]^m$, and returns as output a list of all the extreme points of the convex subset $\PP\subseteq [0,1]^m$ specified by these inequalities.
\end{lemma}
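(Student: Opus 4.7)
The plan is to use the standard characterization: a point $s\in[0,1]^m$ is an extreme point of $\PP$ if and only if it satisfies with equality some collection of $m$ of the defining inequalities whose associated linear functionals are linearly independent over $\QQ$. Since the list of defining inequalities is finite (including the $2m$ box constraints $s_j\ge 0$ and $s_j\le 1$), there are only finitely many subsets of size $m$ to consider.

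First I would augment the input list of inequalities with the box constraints $s_j\ge 0$ and $1-s_j\ge 0$ for each $j\in\{1,\ldots,m\}$, producing a finite list $\{\ell_i(s)\ge b_i\}_{i=1}^{N}$ with each $\ell_i$ a linear form with integer coefficients. Then I would enumerate all $\binom{N}{m}$ subsets $I\subseteq\{1,\ldots,N\}$ of cardinality $m$. For each such $I$, I would form the linear system $\ell_i(s)=b_i$ for $i\in I$ and attempt to solve it by Gaussian elimination over $\QQ$; this is effective because every arithmetic operation is performed on rationals. If the system has a unique solution $s^*\in\QQ^m$ (i.e., the coefficient matrix is nonsingular), I would then verify that $s^*$ satisfies every inequality $\ell_j(s^*)\ge b_j$ in the full list; if so, $s^*$ is recorded as a candidate extreme point. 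After processing all subsets, I would remove duplicates from the candidate list and output the result.

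The correctness of this procedure rests on the standard fact that a point of a polytope $\PP\subseteq\RR^m$ is extreme if and only if the subset of defining inequalities that are active at that point contains a subset of cardinality $m$ whose associated linear forms are linearly independent. In one direction, if $m$ independent inequalities are active at a feasible point $s^*$, then the unique solution of the corresponding equality system is $s^*$, and any line through $s^*$ would violate at least one of these equalities on one side, so $s^*$ cannot be written as a nontrivial convex combination of two other points of $\PP$. Conversely, if fewer than $m$ linearly independent inequalities are active at $s^*$, then there exists a nonzero direction $v$ along which all active forms vanish, and $s^*\pm\eps v$ remains feasible for small $\eps>0$, so $s^*$ is not extreme.

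There is no significant obstacle here; the argument is entirely classical linear-algebraic vertex enumeration, and the only detail worth noting is that all computations stay within $\QQ$ because the inequalities have integer coefficients, so exact arithmetic and exact feasibility tests are available. The procedure is manifestly finite: its running time is bounded by $\binom{N}{m}$ times the cost of Gaussian elimination and a sweep through the inequality list, which suffices for the decidability conclusion sought in Theorem~\ref{thm:decision}.
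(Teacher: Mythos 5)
Your proposal is essentially the same vertex-enumeration argument the paper gives: adjoin the box constraints, enumerate all size-$m$ index subsets with linearly independent normals, solve the resulting equality systems over $\QQ$, and keep those solutions that are feasible for the full list. The details and the underlying characterization of extreme points match the paper's proof.
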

\begin{proof}
To the given family of inequalities, adjoin the $2m$ inequalities $s_j\ge 0$ and $-s_j\ge -1$. 
$\PP$ is the convex polytope defined by all inequalities in the resulting enlarged family. 
Express these inequalities as $\langle s,v_\alpha\rangle\ge c_\alpha$ for all $\alpha\in A$, where $A$ is a finite nonempty index set.

An arbitrary point $\tau\in\RR^m$ is an extreme point of $\PP$ if and only if 
(i) there exists a set $B$ of indices $\alpha$ having cardinality $m$, 
such that $(v_\beta: \beta\in B)$ is linearly independent and $\langle \tau,v_\beta\rangle = c_\beta$ for all $\beta\in B$, and 
(ii) $\tau$ satisfies $\langle \tau,v_\alpha\rangle\ge c_\alpha$ for all $\alpha\in A$.

Create a list of all subsets $B \subset A$ 
with cardinality equal to $m$.
There are finitely many such sets, since $A$ itself is finite.
Delete each one for which $(v_\beta: \beta\in B)$ is not linearly independent. 
For each subset $B$ not deleted, compute the unique solution $\tau$ of the system of equations $\langle \tau,v_\beta\rangle = c_\beta$ for all $\beta\in B$. 
Include $\tau$ in the list of all extreme points, if and only if $\tau$ satisfies $\langle \tau,v_\alpha\rangle \ge c_\alpha$ for all $\alpha\in A\setminus B$. 
\end{proof}

\begin{proposition} \label{prop:subalg}
There exists an algorithm that takes as input a vector space HBL datum $\fv=\tup{V,\tup{V_j},\tup{\phi_j}}$, an element $t\in[0,1]^m$, and a subspace $\set{0} \subpr W \subpr V$ which is critical with respect to $t$, and determines whether $t\in\PP(\fv)$.
\end{proposition}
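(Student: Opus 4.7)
The central tool is Lemma~\ref{lemma:factorization}, which, under the hypothesis that $W$ is critical with respect to $t$, gives the equivalence
\[
t \in \PP(\fv) \ \Longleftrightarrow\ t \in \PP(\fv_W) \cap \PP(\fv_{V/W}).
\]
Because $\{0\}\subpr W\subpr V$, the ambient vector spaces $W$ and $V/W$ of $\fv_W$ and $\fv_{V/W}$ both have dimension strictly smaller than $\Dim V$. The algorithm is therefore structured as a strong induction on $\Dim V$: reduce the membership question on $\fv$ to the two membership questions on $\fv_W$ and $\fv_{V/W}$, and return YES iff both calls pass. The base case $\Dim V=0$ is trivial, since then $\PP(\fv)=[0,1]^m$ and any $t\in[0,1]^m$ lies in it.

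\textbf{Recursive membership test.} To carry out the two inductive calls, one needs a membership-testing routine that does not require a critical subspace as extra input. The plan is to enumerate the subspaces of the smaller ambient vector space via Lemma~\ref{lemma:enumeratesubspaces}, computing for each subspace its rank profile $(\Dim W', \Dim \phi_1(W'),\ldots,\Dim \phi_m(W'))$. Whenever a supercritical subspace is encountered, return NO. Whenever a critical subspace strictly between $\{0\}$ and the whole ambient space is encountered, recurse into the present subalgorithm on the new triple, which by the inductive hypothesis terminates with the correct answer. Since each such recursive invocation strictly decreases the ambient dimension, the mutual recursion bottoms out at dimension $0$.

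\textbf{Main obstacle.} The delicate point is termination of the enumeration in the case that it produces neither a supercritical subspace nor a proper nontrivial critical one. In that regime every proper nontrivial subspace of the smaller ambient space is strictly subcritical for $t$, so membership reduces to the single inequality asserting that the whole ambient space is subcritical, a condition checkable in one step. The challenge is to certify in finite time that this regime applies, given that the enumeration list is infinite. I expect to address this by exploiting that the attainable rank profiles lie in the finite set $\{0,\ldots,\Dim V\}^{m+1}$, so that after finitely many enumeration steps any critical or supercritical profile that can arise must already have appeared; combined with the fact that strict subcriticality is an open condition on $t$, this will provide a finite stopping criterion.
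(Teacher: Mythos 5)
Your first step is exactly right and matches the paper: apply Lemma~\ref{lemma:factorization} at the critical subspace $W$ to reduce $t\in\PP(\fv)$ to the two questions $t\in\PP(\fv_W)$ and $t\in\PP(\fv_{V/W})$, both on strictly smaller ambient dimension. The divergence, and the gap, is in how you resolve those two inductive calls.

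The paper does not attempt a direct membership test on $\fv_W$ and $\fv_{V/W}$; rather it sets up a mutual induction with Theorem~\ref{thm:decision}. Since $\Dim W,\Dim(V/W)<\Dim V$, Theorem~\ref{thm:decision} (at strictly smaller dimension) computes \emph{finite lists of inequalities} that jointly specify $\PP(\fv_W)$ and $\PP(\fv_{V/W})$, and $t$ is then tested against those finite lists. Theorem~\ref{thm:decision} has its own carefully designed halting criterion: form the partial polytopes $\PP_N$ from the first $N$ enumerated subspaces, check whether every extreme point of $\PP_N$ lies in $\PP$, and halt when it does; the lemma preceding the proof of Theorem~\ref{thm:decision} ensures that for any extreme point $\tau\in(0,1)^m$ of $\PP_N$ there \emph{must} exist a proper nontrivial critical subspace among $W_1,\dots,W_N$, and the cases $\tau_i\in\{0,1\}$ are delegated to Lemma~\ref{lemma:cleanup} or a reduction in the number of maps.

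Your proposed fix for the termination obstacle does not work. Enumeration of subspaces only allows you to \emph{confirm} the existence of a supercritical or critical subspace; it gives no finite certificate that none exists, since the subspace lattice over $\QQ$ is infinite and the list produced by Lemma~\ref{lemma:enumeratesubspaces} comes with no a priori bound on how many entries must be examined before every \emph{realizable} rank profile has occurred. The claim that ``after finitely many enumeration steps any critical or supercritical profile that can arise must already have appeared'' is not justified --- indeed Theorem~\ref{thm:hilbert} shows that deciding which rank profiles are realizable is equivalent to Hilbert's Tenth Problem over $\QQ$, so you should not expect a simple enumeration-with-cutoff to settle it. The openness of strict subcriticality in $t$ likewise does not supply a stopping rule for a fixed $t$. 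The missing idea is precisely the extreme-point-based halting criterion of Theorem~\ref{thm:decision} and the induction scheme that intertwines it with Proposition~\ref{prop:subalg}; once you invoke that theorem on $\fv_W$ and $\fv_{V/W}$, the proof closes.
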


Theorem~\ref{thm:decision} and Proposition~\ref{prop:subalg} will be proved inductively in tandem, according to the following induction scheme. 
The proof of Theorem~\ref{thm:decision} for HBL data in which $V$ has dimension $n$, will rely on Proposition~\ref{prop:subalg} for HBL data in which $V$ has dimension $n$.
The proof of Proposition~\ref{prop:subalg} for HBL data in which $V$ has dimension $n$ and there are $m$ subspaces $V_j$, will rely on Proposition~\ref{prop:subalg} for HBL data in which $V$ has dimension strictly less than $n$, on Theorem~\ref{thm:decision} for HBL data in which $V$ has dimension strictly less than $n$, and also on Theorem~\ref{thm:decision} for HBL data in which $V$ has dimension $n$ and the number of subspaces $V_j$ is strictly less than $m$.
Thus there is no circularity in the reasoning.

\begin{proof}[Proof of Proposition~\ref{prop:subalg}]
Let $\tup{V,\tup{V_j},\tup{\phi_j}}$ and $t,W$ be given. 
Following Notation~\ref{not:splittingspaces}, 
consider the two HBL data $\fv_W=\tup{W,\tup{\phi_j(W)},\tup{\restr{\phi_j}{W}}}$ 
and $\fv_{V/W}=\tup{V/W,\tup{V_j/\phi_j(W)},\tup{[\phi_j]}}$, where $[\phi_j] \from V/W\to V_j/\phi_j(W)$ are the quotient maps.
From a basis for $V$, bases for $V_j$, a basis for $W$, and corresponding matrix representations of $\phi_j$, it is possible to compute the dimensions of, and bases for, $V/W$ and  $V_j/\phi_j(W)$, via row operations on matrices.
According to Lemma~\ref{lemma:factorization}, $t\in\PP(\fv)$ if and only if 
$t\in\PP(\fv_W)\cap\PP(\fv_{V/W})$.

Because $\{0\} \subpr W \subpr V$, both $W,V/W$ have dimensions strictly less than the dimension of $V$. 
Therefore by Theorem~\ref{thm:decision} and the induction scheme, there exists an algorithm which computes both a finite list of inequalities characterizing $\PP(\fv_W)$, and a finite list of inequalities characterizing 
$\PP(\fv_{V/W})$.
Testing each of these inequalities on $t$ determines whether $t$ belongs to these two polytopes, hence whether $t$ belongs to $\PP(\fv)$.
\end{proof}

\begin{lemma} \label{lemma:cleanup} 
Let an HBL datum $\fv=\tup{V,\tup{V_j},\tup{\phi_j}}$ be given. 
Let $i\in\set{1,2,\ldots,m}$.
Let $s\in[0,1]^m$ and suppose that $s_i=1$.
Let $V'=\Kernel{\phi_i}$.
Define $\widehat{s}\in[0,1]^{m-1}$ to be $(s_1,\ldots,s_m)$ with the $i^\text{th}$ coordinate deleted.
Then $s\in\PP\tup{V,\tup{V_j},\tup{\phi_j}}$ if and only if $\widehat{s}\in\PP\tup{V',\tup{V_j}_{j\ne i},\tup{\restr{\phi_j}{V'}}_{j\ne i}}$.
\end{lemma}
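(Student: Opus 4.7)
The plan is to verify the two implications directly from the rank conditions in \eqref{subcriticalhypothesisfield}, using only the rank-nullity identity applied to $\restr{\phi_i}{W}$. The central observation is that for any subspace $W \subim V$, the subspace $U \ceq W \cap V' = W \cap \Kernel{\phi_i}$ is the kernel of $\restr{\phi_i}{W}$, so
\[ \Dim{W} = \Dim{\phi_i(W)} + \Dim{W \cap V'}. \]

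For the forward direction, suppose $s \in \PP(\fv)$, and consider any subspace $U \subim V' = \Kernel{\phi_i}$. Since $\phi_i(U) = \{0\}$, the inequality \eqref{subcriticalhypothesisfield} for $W = U$ in $\fv$ reads
\[ \Dim{U} \le s_i \cdot 0 + \sum_{j \ne i} s_j \Dim{\phi_j(U)} = \sum_{j \ne i} s_j \Dim{(\restr{\phi_j}{V'})(U)}, \]
which is precisely the condition defining $\widehat{s} \in \PP(\fv')$ where $\fv' = \tup{V',\tup{V_j}_{j\ne i},\tup{\restr{\phi_j}{V'}}_{j\ne i}}$.

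For the backward direction, suppose $\widehat{s} \in \PP(\fv')$, and let $W \subim V$ be arbitrary. Set $U = W \cap V'$, which is a subspace of $V'$. By hypothesis applied to $U$,
\[ \Dim{U} \le \sum_{j \ne i} s_j \Dim{\phi_j(U)} \le \sum_{j \ne i} s_j \Dim{\phi_j(W)}, \]
where the second inequality uses monotonicity of $\Dim{\phi_j(\cdot)}$ under inclusion of subspaces. Combining with the rank-nullity identity above and the assumption $s_i = 1$ yields
\[ \Dim{W} = \Dim{\phi_i(W)} + \Dim{U} \le s_i \Dim{\phi_i(W)} + \sum_{j \ne i} s_j \Dim{\phi_j(W)}, \]
which is \eqref{subcriticalhypothesisfield} for $W$ in $\fv$. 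Since $W$ was arbitrary, $s \in \PP(\fv)$.

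There is no serious obstacle: both directions are one-line bookkeeping once one recognizes the rank-nullity decomposition and that $s_i = 1$ makes the $\phi_i(W)$ contribution to the right-hand side exactly account for the ``loss'' $\Dim{W} - \Dim{W \cap V'}$ incurred when passing from $W$ to $U = W \cap V'$. This is also the conceptual reason the lemma will be useful in the algorithmic reduction: any coordinate $s_i$ pinned at $1$ can be excised by replacing $V$ with $\Kernel{\phi_i}$.
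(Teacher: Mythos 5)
Your proof is correct and takes essentially the same route as the paper. The only cosmetic difference is that where you invoke rank-nullity for $\restr{\phi_i}{W}$ to get $\Dim{W} = \Dim{\phi_i(W)} + \Dim{W\cap V'}$, the paper introduces an explicit supplement $W''$ of $W\cap V'$ in $W$ and notes $\phi_i$ is injective on $W''$; these are the same dimension count.
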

\begin{proof}
Write $\fv' = \tup{V',\tup{V_j}_{j\ne i},\tup{\restr{\phi_j}{V'}}_{j\ne i}}$.
For any subspace $W \subim V'$, since $\Dim{\phi_i(W)}=0$,
\[ \sum_j s_j \Dim{\phi_j(W)} = \sum_{j\ne i} s_j \Dim{\phi_j(W)}. \]
So if $s\in\PP(\fv)$ then $\widehat{s}\in\PP(\fv')$.

Conversely, suppose that $\widehat{s}\in\PP(\fv')$.
Let $W$ be any subspace of $V$. 
Write $W=W''+(W\cap V')$ where the subspace $W'' \subim V$ is a supplement to $W \cap V'$ in $W$, so that $\Dim{W} = \Dim{W''} + \Dim{W\cap V'}$.
Then
\begin{align*}
\sum_j s_j \Dim{\phi_j(W)} &= \Dim{\phi_i(W)} + \sum_{j\ne i}s_j \Dim{\phi_j(W)}
\\ &\ge  \Dim{\phi_i(W'')} + \sum_{j\ne i}s_j \Dim{\phi_j(W\cap V')}
\\ &\ge  \Dim{W''} + \Dim{W\cap V'};
\end{align*}
$\Dim{\phi_i(W'')} = \Dim{W''}$ because $\phi_i$ is injective on $W''$.
So $s\in\PP(\fv)$.
\end{proof}

To prepare for the proof of Theorem~\ref{thm:decision}, let $\PP(\fv)=\PP\tup{V,\tup{V_j},\tup{\phi_j}}$ be given.
Let $\tup{W_1,W_2,W_3,\ldots}$ be the list of subspaces of $V$ produced by the algorithm of Lemma~\ref{lemma:enumeratesubspaces}. 
Let $N\ge 1$.
To each index $\alpha \in\set{1,2,\ldots,N}$ is associated a linear inequality $\sum_{j=1}^m s_j\Dim{\phi_j(W_\alpha)} \ge \Dim{W_\alpha}$ for elements $s\in[0,1]^m$, which we encode by an $(m+1)$--tuple $\tup{v(W_\alpha),c(W_\alpha)}$; the inequality is $\langle s,v(W_\alpha)\rangle\ge c(W_\alpha)$.
Define $\PP_N\subseteq[0,1]^m$ to be the polytope defined by this set of inequalities.

\begin{lemma} \label{lemma:musthalt}
\begin{equation} \label{tautologicalinclusion}
\PP_N \supseteq \PP(\fv) \qquad \text{for all $N$}. 
\end{equation}
Moreover, there exists a positive integer $N$ such that $\PP_M=\PP(\fv)$ for all $M\ge N$.
\end{lemma}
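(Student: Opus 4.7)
For the inclusion \eqref{tautologicalinclusion}, observe that $\PP(\fv)$ is by definition the subset of $[0,1]^m$ cut out by the (possibly infinite) family of inequalities $\langle s,v(W)\rangle\ge c(W)$ as $W$ ranges over \emph{all} subspaces of $V$, while $\PP_N$ is cut out by the subfamily corresponding to $W_1,\ldots,W_N$. A larger system of constraints defines a smaller set, so $\PP_N\supseteq\PP(\fv)$ for every $N$.

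For the stabilization claim, the key point is that although there are infinitely many subspaces of $V$ (since $\FF=\QQ$), the inequalities they generate come from a finite set. Each inequality $\langle s,v(W)\rangle\ge c(W)$ is determined by the $(m+1)$--tuple
\[
\tau(W)\ceq\bigl(\Dim{W},\,\Dim{\phi_1(W)},\,\ldots,\,\Dim{\phi_m(W)}\bigr),
\]
and each coordinate of $\tau(W)$ lies in $\{0,1,\ldots,\Dim{V}\}$. Hence the set $\{\tau(W):W\subim V\}$ is a finite subset of $\{0,1,\ldots,\Dim{V}\}^{m+1}$, of cardinality at most $(\Dim{V}+1)^{m+1}$, and consequently only finitely many distinct inequalities arise among the inequalities indexed by all subspaces of $V$.

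Since the algorithm of Lemma~\ref{lemma:enumeratesubspaces} produces a list containing every subspace of $V$, for each tuple $\tau$ realized as $\tau(W)$ for some $W\subim V$ there is an index $\alpha$ with $\tau(W_\alpha)=\tau$. Let $N$ be the maximum of these (finitely many) indices. Then every inequality appearing in the defining family of $\PP(\fv)$ already appears among the inequalities defining $\PP_N$, so $\PP_N\subseteq\PP(\fv)$. Combined with \eqref{tautologicalinclusion} this gives $\PP_N=\PP(\fv)$, and since enlarging $N$ only appends further inequalities drawn from the same finite set, $\PP_M=\PP(\fv)$ for every $M\ge N$.

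The argument presents no real obstacle; the only thing requiring care is the separation between the infinite list of subspaces and the finite list of inequalities they induce, which is exactly the observation already isolated immediately after Definition~\ref{def:P}.
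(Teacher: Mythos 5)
Your proof is correct and follows essentially the same path as the paper's: the inclusion is by tautology (more constraints cut out a smaller set), and termination follows because only finitely many distinct inequalities arise, so any complete list of subspaces eventually exhausts them. The only difference is cosmetic — you explicitly rederive the finiteness bound $(\Dim{V}+1)^{m+1}$ for the tuples $\tau(W)$, whereas the paper simply cites the fact, recorded just after Definition~\ref{def:P}, that $\PP$ is defined by finitely many inequalities, and then picks one witnessing subspace per inequality.
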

\begin{proof}
The inclusion holds for every $N$, because the set of inequalities defining $\PP_N$ is a subset of the set 
defining $\PP(\fv)$.

$\PP(\fv)$ is specified by some finite set of inequalities, each specified by some subspace of $V$.
Choose one such subspace for each of these inequalities.
Since $\tup{W_\alpha}$ is a list of all subspaces of $V$, there exists $M$ such that each of these chosen subspaces belongs to $\tup{W_\alpha: \alpha\le M}$. 
\end{proof}

\begin{lemma}
Let $m\ge 2$.
If $s$ is an extreme point of $\PP_N$, then either $s_j\in\set{0,1}$ for some $j\in\set{1,2,\ldots,m}$, or there exists $\alpha\in\set{1,2,\ldots,N}$ for which $W_\alpha$ is critical with respect to $s$ and $0<\Dim{W_\alpha}<\Dim{V}$.
\end{lemma}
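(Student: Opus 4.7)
The plan is to use the standard characterization of extreme points of a convex polytope in $\RR^m$: a point $s$ is extreme if and only if the set of inequality normals that are active (tight) at $s$ spans $\RR^m$. For $\PP_N\subseteq[0,1]^m$, the defining inequalities are the $2m$ box constraints $s_j\ge 0$ and $-s_j\ge -1$, together with the $N$ subspace constraints $\langle s,v(W_\alpha)\rangle\ge c(W_\alpha)$ for $\alpha=1,\ldots,N$, where $v(W_\alpha)=(\Dim{\phi_1(W_\alpha)},\ldots,\Dim{\phi_m(W_\alpha)})$ and $c(W_\alpha)=\Dim{W_\alpha}$. By definition, the subspace inequality associated with $W_\alpha$ is active at $s$ exactly when $W_\alpha$ is critical with respect to $s$.

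Let $s$ be an extreme point of $\PP_N$ and suppose $s_j\notin\set{0,1}$ for every $j$; it then suffices to exhibit some $\alpha\le N$ for which $W_\alpha$ is critical with respect to $s$ and $0<\Dim{W_\alpha}<\Dim{V}$. Since each coordinate $s_j$ lies strictly in $(0,1)$, none of the box constraints is active at $s$. By extremality, the set of remaining active normals,
\[
\set{v(W_\alpha) : 1\le\alpha\le N \text{ and } W_\alpha \text{ is critical with respect to } s},
\]
must therefore span $\RR^m$.

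I next partition the critical $W_\alpha$ by the value of $\Dim{W_\alpha}$. If $W_\alpha=\set{0}$, then $v(W_\alpha)=0$ and the constraint contributes nothing to the span. If $W_\alpha=V$, all such entries share the single fixed normal $v(V)$, so they collectively contribute at most a one-dimensional subspace. Consequently, if no critical $W_\alpha$ with $\alpha\le N$ satisfied $0<\Dim{W_\alpha}<\Dim{V}$, the span of active normals would have dimension at most $1$. Since $m\ge 2$, this contradicts the extremality of $s$, completing the argument.

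The only point requiring care is to correctly enumerate the constraints that cut out $\PP_N$ and to recognize that tightness of a subspace constraint is the same as criticality of the corresponding $W_\alpha$; after that the argument is a one-line dimension count, and I expect no genuine obstacle. Compared with Proposition~\ref{prop:0or1}, the present lemma is easier precisely because $\PP_N$ is already given as the intersection of finitely many explicitly-indexed half-spaces, so one need not pass through the factorization machinery.
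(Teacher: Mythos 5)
Your proof is correct and follows essentially the same reasoning as the paper's: the paper observes that an extreme point must have at least $m$ linearly independent active constraints, that with all $s_j\in(0,1)$ none of the box constraints are active, that the constraint from $W_\alpha=\set{0}$ is trivial, and that at most one nontrivial active constraint can come from $W_\alpha=V$, forcing some critical proper nonzero $W_\beta$. You phrase this as a dimension count on the span of active normals, which is a cleaner way of saying the same thing.
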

In the following argument, we say that two inequalities $\langle s,v(W_\alpha)\rangle\ge c(W_\alpha)$, $\langle s,v(W_\beta)\rangle\ge c(W_\beta)$ are distinct if they specify different subsets of $\RR^m$.
\begin{proof}
For any extreme point $s$, equality must hold in at least $m$ distinct inequalities among those defining $\PP_N$.
These inequalities are of three kinds: $\langle s,v(W_\alpha)\rangle\ge c(W_\alpha)$ for $\alpha\in\set{1,2,\ldots,N}$, $s_j\ge 0$, and $-s_j\ge -1$, with $j\in\set{1,2,\ldots,m}$.
If $W_\beta=\set{0}$ then $W_\beta$ specifies the tautologous inequality $\sum_j s_j\cdot 0=0$, so that index $\beta$ can be disregarded.

If none of the coordinates $s_j$ are equal to $0$ or $1$, there must exist $\beta$ such that equality holds in at least two distinct inequalities $\langle s,v(W_\beta)\rangle \ge  c(W_\beta)$ associated to subspaces $W_\alpha$ among those which are used to define $\PP_N$.
We have already discarded the subspace $\set{0}$, so there must exist $\beta$ such that $W_\beta$ and $V$ specify distinct inequalities.
Thus $0<\Dim{W_\beta}<\Dim{V}$.
\end{proof}

\begin{proof}[Proof of Theorem~\ref{thm:decision}]
Set $\PP=\PP\tup{V,\tup{V_j},\tup{\phi_j}}$.
Consider first the base case $m=1$. 
The datum is a pair of finite-dimensional vector spaces $V,V_1$ with 
a $\QQ$--linear map $\phi\from V\to V_1$.
The polytope $\PP$ is the set of all $s\in[0,1]$ for which $s\Dim{\phi(W)}\ge \Dim{W}$ for every subspace $W\subim V$. 
If $\Dim{V}=0$ then $\PP\tup{V,\tup{V_j},\tup{\phi_j}} = [0,1]$.
If $\Dim{V}>0$ then since $\Dim{\phi(W)} \le \Dim{W}$ for every subspace, the inequality can only hold if the kernel
of $\phi$ has dimension $0$, and then only for $s=1$.
The kernel of $\phi$ can be computed.
So  $\PP$ can be computed when $m=1$.

Suppose that $m\ge 2$.
Let $\tup{V,\tup{V_j},\tup{\phi_j}}$ be given.
Let $N=0$. 
Recursively apply the following procedure.

Replace $N$ by $N+1$. 
Consider $\PP_N$. 
Apply Lemma~\ref{lemma:enumeratevertices} to obtain a list of all extreme points $\tau$ of $\PP_N$, and for each such $\tau$ which belongs to $(0,1)^m$, a nonzero proper subspace $W(\tau)\subim V$ which is critical with respect to $\tau$. 

Examine each of these extreme points $\tau$, to determine whether $\tau\in\PP\tup{V,\tup{V_j},\tup{\phi_j}}$.
There are three cases. 
Firstly, if $\tau\in (0,1)^m$, then Proposition~\ref{prop:subalg} may be invoked, using the critical subspace $W(\tau)$, to determine whether $\tau\in \PP$.

Secondly, if some component $\tau_i$ of $\tau$ equals $1$, let $V'$ be the kernel of $\phi_i$.
Set 
\[
\PP'=\PP\dtup{V',\dtup{V_j}_{j\ne i},\dtup{\restr{\phi_j}{V'}}_{j\ne i}}.
\]
According to Lemma~\ref{lemma:cleanup}, $\tau\in\PP$ if and only if $\widehat{\tau} = \tup{\tau_j}_{j\ne i}\in\PP'$. 
This polytope $\PP'$ can be computed by the induction hypothesis, since the number of indices $j$ has been reduced by one.

Finally, if some component $\tau_i$ of $\tau$ equals $0$, then because the term $s_i\Dim{\phi_i(W)}=0$ contributes nothing to sums $\sum_{j=1}^m s_j \Dim{\phi_j(W)}$, $\tau\in \PP$ if and only if $\widehat{\tau}$ belongs to $\PP\tup{V,\tup{V_j}_{j\ne i},\tup{\phi_j}_{j\ne i}}$.
To determine whether $\widehat{\tau}$ belongs to this polytope requires again only an application of the induction hypothesis.

If every extreme point $\tau$ of $\PP_N$ belongs to $\PP$, then because $\PP_N$ is the convex hull of its extreme points, $\PP_N\subseteq \PP$.
The converse inclusion holds for every $N$, so in this case $\PP_N= \PP$.
The algorithm halts, and returns the conclusion that $\PP =\PP_N$, along with information already computed: a list of the inequalities specified by all the subspaces  $W_1,\ldots,W_N$, and a list of extreme points of $\PP_N = \PP$.

On the other hand, if at least one extreme point of $\PP_N$ fails to belong to $\PP$, then $\PP_N\ne\PP$.
Then increment $N$ by one, and repeat the above steps. 

Lemma~\ref{lemma:musthalt} guarantees that this procedure will halt after finitely many steps. 
\end{proof}

\section{On (un)computability of the list of constraints defining $\PP$}
\label{sec:undecidability}


%
We have discussed the computation of $\PP$. Now we turn to the problem of computing the set of inequalities \eqref{subcriticalhypothesis2}.
There may be many sets of 
inequalities which specify $\PP$; thus in order to compute $\PP$, it suffices to compute any such set of inequalities, rather than 
the specific set \eqref{subcriticalhypothesis2}.
This distinction accounts for the coexistence of Theorems~\ref{thm:decision} and \ref{thm:hilbert}. 


In order to compute the set of inequalities \eqref{subcriticalhypothesis2}, we must compute the answer to the question:
Given any group homomorphisms $\phi_1, \ldots, \phi_m$ and integers $0 \leq r,r_1,\ldots, r_m \leq d$, does there exist a subgroup $H \subim \ZZ^d$ such that
\begin{equation*}
\Rank{H} = r, 
\ \text{ and } \ \Rank{\phi_j(H)}=r_j
\ \text{ for all }\ 1\le j\le m?
\end{equation*}
%
%

%
\begin{notation}
For a natural number $d$ and ring $R$, $\Mat{d}{R}$ denotes the ring of $d$--by--$d$ matrices with entries from $R$.
%
%
 We identify $\Mat{d}{R}$ with the endomorphism ring of the $R$--module $R^d$
(or $R$-vector space, if $R$ is a field) 
and thus may write elements of $\Mat{d}{R}$ as $R$--linear maps rather than as matrices. 
 Via the usual coordinates, we may identify $\Mat{d}{R}$ with $R^{d^2}$. 
 We write $R[x_1,\ldots,x_q]$ to denote the ring of polynomials over $R$ in variables $x_1,\ldots,x_q$.
\end{notation}

Recall that there are given $d,d_j\in\NN$ and $\ZZ$--linear maps $\phi_j\from\ZZ^d\to\ZZ^{d_j}$, for $j\in\set{1,2,\ldots,m}$ for some positive integer $m$.
%
%
%
%
Each $\phi_j$ can also be interpreted as a $\QQ$--linear map (from $\QQ^d$ to $\QQ^{d_j}$), represented by the same integer matrix.
It is no loss of generality to assume that each $d_j=d$, so that each $\phi_j$ is an endomorphism of $\ZZ^d$. 

\begin{definition} 
\label{defZtoQ}
Given $m,d \in \NN$, and a finite sequence $r,r_1,\ldots,r_m$ of natural numbers each bounded by $d$, we define the sets
\begin{align*}
E_{d;r,r_1,\ldots,r_m} &\ceq \dset{(\phi_1,\ldots,\phi_m) \in (\Mat{d}{\ZZ})^m : (\exists H \subim \ZZ^d)\; \Rank{H} = r \text{ and } \Rank {\phi_j(H)} = r_j, 1 \le j \le m }, \\
E_{d;r,r_1,\ldots,r_m}^\QQ &\ceq \dset{(\phi_1,\ldots,\phi_m) \in (\Mat{d}{\QQ})^m : (\exists V \subim \QQ^d)\; \Dim{V} = r \text{ and } \Dim{\phi_j(V)} = r_j, 1 \le j \le m }.
\end{align*}
%
%
\end{definition}

\begin{remark} \label{rmk:minorpolynomials}
The question of whether a given $m$--tuple $(\phi_1,\ldots,\phi_m) \in (\Mat{d}{R})^m$ is a member of $E_{d;r,r_1,\ldots,r_m}$ (when $R=\ZZ$) or $E_{d;r,r_1,\ldots,r_m}^\QQ$ (when $R=\QQ$) is an instance of the problem of whether some system of polynomial equations has a solution over the ring $R$.
We let $B$ be a $d$--by--$r$ matrix of variables, and construct a system of polynomial equations in the $dr$ unknown entries of $B$ and $md^2$ known entries of $\phi_1,\ldots,\phi_m$ that has a solution if and only if the aforementioned rank (or dimension) conditions are met.
The condition $\Rank{M} = s$ for a matrix $M$ is equivalent to all $(s+1)$--by--$(s+1)$ minors of $M$ equaling zero (i.e., the sum of their squares equaling zero), and at least one $s$--by--$s$ minor being nonzero (i.e., the sum of their squares not equaling zero --- see Remark~\ref{rmk:5.1}). 
We construct two polynomial equations in this manner for $M=B$ (with $s=r$) and for each matrix $M=\phi_j B$ (with $s=r_j$).
\end{remark}

\begin{lemma} 
\label{rationalgroup}
With the notation as in Definition~\ref{defZtoQ}, $E_{d;r,r_1,\ldots,r_m} = E_{d;r,r_1,\ldots,r_m}^\QQ \cap (\Mat{d}{\ZZ})^m$.
\end{lemma}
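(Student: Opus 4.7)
My plan is to prove the two inclusions separately, with the key tool being the standard correspondence between finitely generated torsion-free $\ZZ$--submodules of $\ZZ^d$ and $\QQ$--subspaces of $\QQ^d$ via the operations of tensoring with $\QQ$ (extending scalars) and clearing denominators.

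For the inclusion $E_{d;r,r_1,\ldots,r_m} \subseteq E_{d;r,r_1,\ldots,r_m}^\QQ \cap (\Mat{d}{\ZZ})^m$, I would take an $m$--tuple $(\phi_1,\ldots,\phi_m) \in E_{d;r,r_1,\ldots,r_m}$ with witness $H \subim \ZZ^d$, and set $V$ to be the $\QQ$--span of $H$ inside $\QQ^d$. Since $H$ is torsion-free (as a subgroup of $\ZZ^d$), its rank equals the dimension of the $\QQ$--vector space it spans, so $\Dim{V}=\Rank{H}=r$. Moreover, $\phi_j(V)$ is the $\QQ$--span of $\phi_j(H)$ (by $\QQ$--linearity of $\phi_j$), so $\Dim{\phi_j(V)}=\Rank{\phi_j(H)}=r_j$ by the same argument applied to $\phi_j(H) \subim \ZZ^{d}$.

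For the reverse inclusion, I would begin with integer matrices $(\phi_1,\ldots,\phi_m)$ and a subspace $V \subim \QQ^d$ witnessing membership in $E_{d;r,r_1,\ldots,r_m}^\QQ$. The essential observation is that any finite-dimensional $\QQ$--subspace of $\QQ^d$ admits a basis of vectors lying in $\ZZ^d$ (clear denominators in any chosen basis). Let $b_1,\ldots,b_r \in \ZZ^d$ be such a basis, and define $H$ to be the subgroup of $\ZZ^d$ generated by $\{b_1,\ldots,b_r\}$. Then $H$ is free abelian of rank $r$, and its $\QQ$--span is precisely $V$. Consequently $\phi_j(H)$ is the subgroup of $\ZZ^d$ generated by $\phi_j(b_1),\ldots,\phi_j(b_r)$, and its $\QQ$--span is $\phi_j(V)$, so $\Rank{\phi_j(H)} = \Dim{\phi_j(V)} = r_j$.

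Neither direction poses any real obstacle; the only thing to be careful about is confirming that ``rank'' of a finitely generated subgroup of $\ZZ^{d}$ coincides with the $\QQ$--dimension of its span in $\QQ^{d}$, which is immediate because such a subgroup is torsion-free and hence $H \otimes_\ZZ \QQ$ is naturally a $\QQ$--subspace of $\QQ^d$ of dimension $\Rank{H}$. Both directions combined yield the set equality claimed by the lemma.
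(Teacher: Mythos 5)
Your proof is correct and follows essentially the same route as the paper's (and as the paper's Lemma~\ref{lem:PZequalsPQ}, to which the paper's proof of this lemma defers): pass from $H$ to its $\QQ$--span for one inclusion, and from $V$ to a rank-$r$ subgroup of $\ZZ^d$ whose $\QQ$--span is $V$ for the other. The only cosmetic difference is in the construction of $H$ for the reverse inclusion --- the paper's restatement takes $H = V \cap \ZZ^d$, while you generate $H$ from a denominator-cleared basis of $V$; both choices yield a subgroup of the correct rank with $\QQ$--span $V$, so the argument is the same in substance.
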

\begin{proof}
This result was already established in Lemma~\ref{lem:PZequalsPQ}; we restate it here using the present notation.
For the left-to-right inclusion, observe that if $H \subim \ZZ^d$ witnesses that $(\phi_1,\ldots,\phi_m) \in E_{d;s,r_1,\ldots,r_m}$, then $H_\QQ$ witnesses that $(\phi_1,\ldots,\phi_m) \in E_{d;r,r_1,\ldots,r_m}^\QQ$.
For the other inclusion, if  $(\phi_1,\ldots,\phi_m) \in E_{d;r,r_1,\ldots,r_m}^\QQ \cap (\Mat{d}{\ZZ})^m$ witnessed by $V \subim \QQ^d$, then we may find a submodule $H = V\cap\ZZ^d$ of $\ZZ^d$, with $\Rank{H}=\Dim{V}$.
Then $\Dim{\phi_j(V)}  = \Rank{\phi_j(H)} = r_j$ showing that $(\phi_1,\ldots,\phi_m) \in E_{d;r,r_1,\ldots,r_m}$. 
\end{proof}

\begin{definition} \label{def:HTPQ}
\emph{Hilbert's Tenth Problem for $\QQ$} is the question of whether there is an algorithm which given a finite set of polynomials $f_1(x_1,\ldots,x_q),\ldots, f_p(x_1,\ldots,x_q) \in \QQ[x_1,\ldots,x_q]$ (correctly) determines whether or not there is some $a \in \QQ^q$ for which $f_1(a) = \cdots = f_p(a) = 0$.
\end{definition}

\begin{remark} \label{rmk:5.1}
One may modify the presentation of Hilbert's Tenth Problem for $\QQ$ in various ways without affecting its truth value.  
For example, one may allow a condition of the form $g(a) \neq 0$ as this is equivalent to $(\exists b) (g(a)b - 1 = 0)$.  
On the other hand, using the fact that $x^2 + y^2 = 0 \Longleftrightarrow x = 0 = y$, one may replace the finite sequence of polynomial equations with a single equality.
\end{remark}

\begin{remark}
Hilbert's Tenth Problem, proper, asks for an algorithm to determine solvability in integers of finite systems of equations over $\ZZ$.
 From such an algorithm one could positively resolve Hilbert's Tenth Problem for $\QQ$.  
 However, by the celebrated theorem of Matiyasevich-Davis-Putnam-Robinson~\cite{Matiyasevich93}, no such algorithm exists.   
 The problem for the rationals remains open.  
 The most natural approach would be to reduce from the problem over $\QQ$ to the problem over $\ZZ$, say, by showing that $\ZZ$ may be defined by an expression of the
form
$$
a \in \ZZ \Longleftrightarrow (\exists y_1) \cdots (\exists y_q) P(a;y_1,\ldots,y_q) = 0
$$
 for some fixed polynomial $P$.
 Koenigsmann~\cite{Kon} has shown that there is in fact a \emph{universal} definition of $\ZZ$ in $\QQ$, that is, a formula of the form
$$
a \in \ZZ \Longleftrightarrow (\forall y_1) \cdots (\forall y_q) \theta(a;y_1,\ldots,y_q) = 0
$$
 where $\theta$ is a finite Boolean combination of polynomial equations, but he also demonstrated that the existence of an existential
definition of $\ZZ$ would violate the Bombieri-Lang conjecture. 
Koenigsmann's result shows that it is unlikely that Hilbert's Tenth Problem for $\QQ$ can be resolved by reducing to the problem over $\ZZ$ using an existential definition of $\ZZ$ in $\QQ$. 
However, it is conceivable that this problem could be resolved without such a definition.
\end{remark}

\begin{proof}[Proof of Theorem~\ref{thm:hilbert} (necessity)]
Evidently, if Hilbert's Tenth Problem for $\QQ$ has a positive solution, then there is an algorithm to (correctly) determine for $d \in \NN$, $r, r_1, \ldots, r_m \leq d$ also in $\NN$, and $(\phi_1,\ldots,\phi_m) \in (\Mat{d}{\ZZ})^m$ whether $(\phi_1, \ldots, \phi_m) \in  E_{d;r,r_1,\ldots,r_m}$.
%
By Lemma~\ref{rationalgroup}, $(\phi_1,\ldots,\phi_m) \in E_{d;r,r_1,\ldots,r_m}$ just in case $(\phi_1,\ldots,\phi_m) \in E_{d;r,r_1,\ldots,r_m}^\QQ$.  
This last condition leads to an instance of Hilbert's Tenth Problem (for $\QQ$) for the set of rational polynomial equations given in Remark~\ref{rmk:minorpolynomials}.
\end{proof}

\begin{notation}
Given a set $S \subseteq \QQ[x_1,\ldots,x_q]$ of polynomials, we denote the set of rational solutions to the equations $f = 0$ as $f$ ranges through $S$ by
$$
V(S)(\QQ) \ceq \dset{ a \in \QQ^q : (\forall f \in S) f(a) = 0 }.
$$
\end{notation}
%

\begin{definition}
For any natural number $t$, we say that the set $D \subseteq \QQ^t$ is \emph{Diophantine} if there is some $q-t \in \NN$ and a set 
$S \subseteq \QQ[x_1,\ldots,x_t;y_1,\ldots,y_{q-t}]$ for which 
$$
D = \dset{ a \in \QQ^t : (\exists b \in \QQ^{q-t})  (a;b) \in V(S)(\QQ) }.
$$
\end{definition}
\noindent We will show sufficiency in Theorem~\ref{thm:hilbert} by establishing a stronger result: namely, that an algorithm to decide membership in sets of the form $E_{d;r,r_1,\ldots,r_m}$ could also be used to decide membership in any Diophantine set. 
(Hilbert's Tenth Problem for $\QQ$ concerns membership in the specific Diophantine set $V(S)(\QQ)$.)

With the next lemma, we use a standard trick of replacing composite terms with single applications of the basic operations to put a general Diophantine set in a standard form (see, e.g., \cite{Vak}).
\begin{lemma}\label{lem:basicset}
Given any finite set of polynomials $S \subset \QQ[x_1,\ldots,x_q]$, let $d \ceq \max_{f\in S} \max_{i=1}^q \deg_{x_i}(f)$ and $\mathcal{D} \ceq \set{0,1,\ldots,d}^q$.
There is another set of variables $\set{u_\alpha}_{\alpha \in \mathcal{D}}$ and another finite set $S' \subset \QQ [ \set{u_\alpha}_{\alpha \in \mathcal{D}} ]$ consisting entirely of affine polynomials (polynomials of the form $c + \sum c_\alpha u_\alpha$ where not all $c$ and $c_\alpha$ are zero) and polynomials of the form $u_\alpha u_\beta - u_\gamma$ with $\alpha$, $\beta$, and $\gamma$ distinct, so that $V(S)(\QQ) = \pi (V(S')(\QQ))$ where $\pi\from\QQ^{\mathcal{D}} \to \QQ^q$ is given by 
$$
\dtup{u_\alpha}_{\alpha \in \mathcal{D}} \mapsto \dtup{u_{(1,0,\ldots,0)},u_{(0,1,0,\ldots,0)}, \ldots, u_{(0,\ldots,0,1)}}.
$$ 
\end{lemma}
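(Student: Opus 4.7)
The plan is to view each variable $u_\alpha$ as a stand-in for the monomial $x^\alpha = x_1^{\alpha_1}\cdots x_q^{\alpha_q}$, under the identification $x_i = u_{e_i}$ where $e_i \in \mathcal{D}$ is the $i$th standard basis vector. Under this reading, the projection $\pi$ is literally the map sending a tuple of monomial values to the tuple of linear monomials $(x_1,\ldots,x_q)$.

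I would build $S'$ in three families. First, the affine polynomial $u_0 - 1$, forcing $u_0 = 1$. Second, for each $f \in S$, use the bound $\deg_{x_i}(f) \le d$ to write $f = \sum_{\alpha \in \mathcal{D}} c_{f,\alpha}\, x^\alpha$ with $c_{f,\alpha} \in \QQ$, and include the affine polynomial $\sum_{\alpha \in \mathcal{D}} c_{f,\alpha}\, u_\alpha$. Third, for every $\alpha \in \mathcal{D}$ with $|\alpha| \ge 2$, include one quadratic relation $u_\beta u_\gamma - u_\alpha$, where $\beta,\gamma \in \mathcal{D}\setminus\{0\}$ satisfy $\beta + \gamma = \alpha$ and are chosen so that $\alpha,\beta,\gamma$ are pairwise distinct.

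The forward inclusion $V(S)(\QQ) \subseteq \pi(V(S')(\QQ))$ is direct: given $a \in V(S)(\QQ)$, set $u_\alpha \ceq a^\alpha$ for every $\alpha \in \mathcal{D}$; this tuple automatically satisfies $u_0 = 1$, the translated affine relations, and every quadratic relation, and its image under $\pi$ is $a$. For the reverse inclusion, given $(u_\alpha)_{\alpha \in \mathcal{D}} \in V(S')(\QQ)$, I would argue by induction on $|\alpha|$, using the quadratic relations, that $u_\alpha = u_{e_1}^{\alpha_1}\cdots u_{e_q}^{\alpha_q}$; substituting into the affine polynomial attached to $f \in S$ yields $f(u_{e_1},\ldots,u_{e_q}) = 0$, so $\pi\dtup{(u_\alpha)} \in V(S)(\QQ)$.

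The main obstacle is the distinctness requirement on $\alpha,\beta,\gamma$ in the quadratic relations. When $\alpha$ has two nonzero coordinates, say $\alpha_i > 0$ and $\alpha_j > 0$ with $i \ne j$, the decomposition $\beta = e_i$, $\gamma = \alpha - e_i$ puts all three indices in distinct positions. When $\alpha = k e_i$ with $k \ge 3$, the decomposition $\beta = e_i$, $\gamma = (k-1)e_i$ again produces three distinct indices. The only delicate case is $\alpha = 2e_i$, whose sole symmetric decomposition is $\beta = \gamma = e_i$; here I would circumvent the issue by an affine copy, identifying $u_{2e_i}$ with a product of two formally distinct indices through an intermediate multi-index (for example, expressing $u_{2e_i}$ indirectly via relations of the form $u_{e_i + e_j} u_{e_i} - u_{2e_i + e_j}$ and $u_{2e_i + e_j} - u_{e_j}u_{2e_i}$ when $q \ge 2$, and observing that the $q = 1$ case can always be reduced to $q \ge 2$ by adding a dummy variable that does not appear in the polynomials of $S$). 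Once this bookkeeping is in place, the inductive verification of the monomial identities $u_\alpha = u_{e_1}^{\alpha_1}\cdots u_{e_q}^{\alpha_q}$ goes through unchanged, completing the proof.
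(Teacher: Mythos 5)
Your construction follows the same monomial-substitution idea as the paper: introduce $u_\alpha$ as a proxy for $x^\alpha$, force $u_{\mathbf 0}=1$, translate each $f\in S$ into an affine relation $\sum_\alpha c_\alpha u_\alpha$, and encode multiplicativity via quadratic relations. You also correctly flag a point that the paper itself glosses over: the relations $u_{2e_i}-u_{e_i}^2$ (needed to control squares) have $\alpha=\beta$ and so do not literally satisfy the ``pairwise distinct'' clause. In fact the paper's own set $T$ contains exactly these offending relations ($u_{\alpha+\beta}-u_\alpha u_\beta$ with $\alpha=\beta=e_i$), so the paper's construction does not meet the distinctness clause either; since the downstream use of the lemma in the sufficiency half of Theorem~\ref{thm:hilbert} never actually relies on $i_{1,j}\neq i_{2,j}$, the clause should be read as $\gamma\notin\{\alpha,\beta\}$ rather than full pairwise distinctness.

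However, the specific repair you propose does not work. From $u_{e_i+e_j}-u_{e_i}u_{e_j}$, $u_{2e_i+e_j}-u_{e_i+e_j}u_{e_i}$, and $u_{2e_i+e_j}-u_{e_j}u_{2e_i}$ one deduces only $u_{e_j}\bigl(u_{2e_i}-u_{e_i}^2\bigr)=0$, which fails to pin down $u_{2e_i}$ when $u_{e_j}=0$. Concretely, take $q=2$ and $S=\{x_1^2-1\}$, so $d=2$ and $V(S)(\QQ)=\{\pm1\}\times\QQ$. The tuple with $u_{(0,0)}=1$, $u_{(1,0)}=17$, $u_{(0,1)}=0$, $u_{(2,0)}=1$, and all other $u_\alpha=0$ satisfies $u_{(0,0)}-1=0$, the translated affine relation $u_{(2,0)}-u_{(0,0)}=0$, and every one of your quadratic relations (each vanishing because it involves a factor $u_{(0,1)}$ or some $u_\alpha$ with $\alpha_2\ge1$); yet $\pi$ sends this point to $(17,0)\notin V(S)(\QQ)$, so $\pi(V(S')(\QQ))\supsetneq V(S)(\QQ)$. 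Adding a dummy variable for the $q=1$ case does not escape this, because the degeneracy $u_{e_j}=0$ arises for every $q$. The cleanest resolution is simply to allow $\alpha=\beta$ in the quadratic family, as the paper's own $T$ implicitly does.
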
 
\begin{proof}
Denote by ${\mathbf 0}$ the element $(0,0,\dots,0)$ of $\mathcal{D}=\{0,1,\dots,d\}^q$.
Let $T \subset \QQ [ \set{ u_\alpha}_{\alpha \in \mathcal{D}} ]$ consist of
\begin{itemize}
\item $u_{(0,\ldots,0)} - 1$ and 
\item $u_{\alpha + \beta} - u_\alpha u_\beta$ for $(\alpha + \beta) \in \mathcal{D}$, $\alpha \neq {\mathbf 0}$ and $\beta \neq {\mathbf 0}$. 
\end{itemize}
Define $\chi\from\QQ^q \to \QQ^{\mathcal{D}}$ by 
$$
\dtup{x_1,\ldots,x_q} \mapsto \dtup{x^\alpha}_{\alpha \in \mathcal{D}}
$$
where $x^\alpha \ceq \prod_{j=1}^q x_j^{\alpha_j}$.
One sees immediately that $\chi$ induces a bijection $\chi\from\QQ^q \to V(T)(\QQ)$ with inverse $\restr{\pi}{V(T)(\QQ)}$.

Let $S'$ be the set containing $T$ and the polynomials $\sum_{\alpha \in \mathcal{D}} c_\alpha u_\alpha$ for which $\sum_{\alpha \in \mathcal{D}} c_\alpha x^\alpha \in S$.   

One checks that if $a \in \QQ^q$, then $\chi(a) \in V(S')(\QQ)$ if and only if $a \in V(S)(\QQ)$.  
Applying $\pi$, and noting that $\pi$ is the inverse to $\chi$ on $V(T)(\QQ)$, the result follows.
\end{proof}
\begin{notation}
For the remainder of this argument, we call a set enjoying the properties identified for $S'$ (namely that each polynomial is either affine or of the form $u_\alpha u_\beta - u_\gamma$) a \emph{basic set}.
\end{notation}


\begin{proof}[Proof of Theorem~\ref{thm:hilbert} (sufficiency)]
It follows from Lemma~\ref{lem:basicset} that the membership problem for a general 
finite
Diophantine set may be reduced to the membership problem for a Diophantine set defined by a finite basic set of equations.
Let  $S \subseteq \QQ[x_1,\ldots,x_q]$ be a finite basic set of equations and let $t \leq q$ be some natural number, 
We now show that there are natural numbers $\mu,\nu,\rho,\rho_1, \ldots, \rho_\mu$ and a computable function $f\from\QQ^t \to (\Mat{\nu}{\ZZ})^\mu$ so that for $a \in \QQ^t$ one has that there is some $b \in \QQ^{q-t}$ with $(a,b) \in V(S)(\QQ)$ if and only if $f(a) \in E_{\nu;\rho,\rho_1,\ldots,\rho_\mu}$.

List the $\ell$ affine polynomials in $S$ as
$$
\lambda_{0,1} + \sum_{i=1}^q \lambda_{i,1} x_i, \qquad \ldots,\qquad \lambda_{0,\ell} + \sum_{i=1}^q \lambda_{i,\ell} x_i
$$
and the $k$ polynomials expressing multiplicative relations in $S$ as
$$
x_{i_{1,1}} x_{i_{2,1}} - x_{i_{3,1}},\qquad \ldots,\qquad x_{i_{1,k}} x_{i_{2,k}} - x_{i_{3,k}}.
$$
Note that by scaling, we may assume that all of the coefficients $\lambda_{i,j}$ are integers. 
  
We shall take $\mu \ceq 4 + q + t + |S|$, $\nu \ceq 2q+2$, $\rho \ceq 2$ and the sequence $\rho_1, \ldots, \rho_\mu$ to consist of $4 + q + k$ ones followed by $t + \ell$ zeros. 
 Let us describe the map $f\from\QQ^t \to (\Mat{\nu}{\ZZ})^\mu$ by expressing each coordinate.  
 For the sake of notation, our coordinates on $\QQ^\nu$ are $(u;v) \ceq (u_0,u_1\ldots, u_q; v_0, v_1, \ldots, v_q)$.
\begin{itemize}
\item[A.] The map $f_1$ is constant taking the value $(u;v) \mapsto (u_0,0,\ldots,0)$.
\item[B.] The map $f_2$ is constant taking the value $(u;v) \mapsto (v_0,0,\ldots,0)$.
\item[C.] The map $f_3$ is constant taking the value $(u;v) \mapsto (u_0,\ldots,u_q;0,\ldots,0)$.
\item[D.] The map $f_4$ is constant taking the value $(u;v) \mapsto (v_0,\ldots,v_q;0,\ldots,0)$.
\item[E.] The map $f_{4+j}$ (for $0 < j \leq q$) is constant taking the value $(u;v) \mapsto (u_0 - v_0,u_j - v_j,0,\ldots,0)$.
\item[F.] The map $f_{4+q+j}$ (for $0 < j \leq k$) is constant taking the value $(u;v) \mapsto (u_{i_{1,j}} + v_0,u_{i_{3,j}} + v_{i_{2,j}},0,\ldots,0)$.
\item[G.] The map $f_{4 + q + k + j}$ (for $0 < j \leq t$) takes $a = (\frac{p_1}{q_1}, \ldots, \frac{p_t}{q_t})$ (written
in lowest terms) to the linear map $(u;v) \mapsto (p_j u_0 -  q_j u_j,0,\ldots,0)$.
\item[H.] The map $f_{4 + q + k + t + j}$ (for $0 < j \leq \ell$) takes the value $(u;v) \mapsto (\sum_{i=0}^q \lambda_{i,j} u_i,0,\ldots,0)$.
\end{itemize}
Note that only the components $f_{4 + q + k + j}$ for $0 < j \leq t$ actually depend on $(a_1,\ldots,a_t) \in \QQ^t$.

Let us check that this construction works.  
First, suppose that $a \in \QQ^t$ and that there is some $b \in \QQ^{q-t}$  for which $(a,b) \in V(S)(\QQ)$. 
For the sake of notation, we write $c = (c_1,\ldots,c_q) \ceq (a_1,\ldots,a_t,b_1,\ldots,b_{q-t})$.  

Let $V \ceq \QQ (1,c_1,\ldots,c_q,0,\ldots,0) + \QQ (0,\ldots,0,1,c_1,\ldots,c_q)$.  
We will check now that $V$ witnesses that $f(a) \in E_{\nu,\rho,\rho_1,\ldots,\rho_\mu}$.   
Note that a general element of $V$ takes the form $(\alpha, \alpha c_1, \ldots, \alpha c_q,\beta,\beta c_1,\ldots,\beta c_q)$ for $(\alpha,\beta) \in \QQ^2$.  
Throughout the rest of this proof, when we speak of a general element of some image of $V$, we shall write $\alpha$ and $\beta$ as variables over $\QQ$.

Visibly, $\Dim{V}=2=\rho$.

Clearly, $f_1(a)(V) = \QQ (1,0,\ldots,0) = f_2(a)(V)$, so that $\rho_1 = \Dim{f_1(a)(V)} = 1 = \Dim{f_2(a)(V)} = \rho_2$ as required.

Likewise, $f_3(a)(V) = \QQ(1,c_1,\ldots,c_q,0,\ldots,0) = f_4(a)(V)$, so that $\rho_3 = \Dim{f_3(a)(V)} = 1 = \Dim{f_4(a)(V)} = \rho_4$.

For $0 < j \leq q$ the general element of $f_{4+j}(a)(V)$ has the form $(\alpha - \beta, \alpha c_j - \beta c_j,0,\ldots,0) = (\alpha - \beta) (1,c_j,0,\ldots,0)$. Thus, $f_{4+j}(a)(V) = \QQ (1,c_j,0,\ldots,0)$ has dimension $\rho_{4+j}=1$.
 
For $0 < j \leq k$ we have  $c_{i_{3,j}} = c_{i_{1,j}} c_{i_{2,j}}$, the general element of $f_{4+q+j}(a)(V)$ has the form $(\alpha c_{i_{1,j}} + \beta,\alpha c_{i_{3,j}} + \beta c_{i_{2,j}},0,\ldots,0)  = (\alpha c_{i_{1,j}} + \beta, \alpha c_{i_{1,j}} c_{i_{2,j}} + \beta c_{i_{2,j}},0,\ldots,0) = (\alpha c_{i_{1,j}} + \beta) (1,c_{i_{2,j}},0,\ldots,0)$ so we have that $\rho_{4+q+j}=\Dim{f_{4+q+j}(a)(V)} = 1$.
 
 For $0 < j \leq t$, the general element of $f_{4 + q + k + j}(a)(V)$ has the form $(p_j \alpha - q_j \alpha a_j,0,\ldots,0) = {\mathbf 0}$.  
 That is, $\rho_{4+q+k+j} = \Dim{f_{4+q+k+j}(a)(V)} = 0$.
 
 Finally, if $0 < j \leq \ell$, then the general element of $f_{4+q+k+t+j}(a)(V)$ has the form $(\lambda_{0,j} \alpha + \sum_{i=1}^q \lambda_{i,j} \alpha c_i, 0, \ldots, 0) = {\mathbf 0}$ since $\lambda_{0,j} + \sum_{i=1}^q \lambda_{i,j} c_i = 0$. 
 So $\rho_{4+q+k+t+j} = \Dim{f_{4+q+k+t+j}(a)(V)}=0$.
 
 Thus, we have verified that if $a \in \QQ^t$ and there is some $b \in \QQ^{d-t}$ with $(a,b) \in V(S)(\QQ)$, then $f(a) \in E_{\nu;\rho,\rho_1,\ldots,\rho_\mu}$.
 
 Conversely, suppose that $a = (\frac{p_1}{q_1}, \ldots, \frac{p_t}{q_t}) \in \QQ^t$ and that $f(a) \in E_{\nu;\rho,\rho_1,\ldots,\rho_\mu}$, with the same $\mu,\nu,\rho,\rho_1,\ldots,\rho_\mu$. 
 Let  $V \subim \QQ^\nu$ have $\Dim{V} = \rho$ witnessing that $f(a) \in E_{\nu;\rho,\rho_1,\ldots,\rho_\mu}$.
 
 \begin{lemma} \label{claim1}
There are elements $g$ and $h$ in $V$ for which 
$g = (0,\ldots,0;g_0,\ldots,g_q)$, 
$h = (h_0,\ldots,h_q;0,\ldots,0)$, 
$g_0 = h_0 = 1$, and $\QQ g + \QQ h = V$.
\end{lemma}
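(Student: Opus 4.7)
The plan is to extract $g$ and $h$ directly from the rank constraints imposed by the components $f_1(a),\ldots,f_4(a)$. The key observation is that $f_3(a)$ and $f_4(a)$ are essentially the projections of $\QQ^\nu$ onto the $u$-coordinates and $v$-coordinates respectively; hence the conditions $\Dim{f_3(a)(V)} = \rho_3 = 1$ and $\Dim{f_4(a)(V)} = \rho_4 = 1$ assert that each of these projections, restricted to $V$, has $1$-dimensional image.

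First I would let $K_u \subim V$ denote the kernel of the $u$-projection restricted to $V$, and $K_v \subim V$ the kernel of the $v$-projection. Since $\Dim{V} = 2$ and each projection has $1$-dimensional image, the rank-nullity theorem gives $\Dim{K_u} = \Dim{K_v} = 1$. Their intersection $K_u \cap K_v$ consists of vectors with both $u$-part and $v$-part zero, hence is $\{0\}$. Consequently $V = K_u \oplus K_v$, and we may take $g$ to be any nonzero element of $K_u$ and $h$ any nonzero element of $K_v$. By construction $g$ has the form $(0,\ldots,0;g_0,\ldots,g_q)$ and $h$ has the form $(h_0,\ldots,h_q;0,\ldots,0)$, and $V = \QQ g + \QQ h$.

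Next I would exploit $\rho_1 = \rho_2 = 1$ to force $g_0, h_0 \ne 0$, which permits rescaling to $g_0 = h_0 = 1$. Since $f_1(a)$ extracts the coordinate $u_0$ and $g$ has zero $u$-part, $f_1(a)(g) = 0$, while $f_1(a)(h) = (h_0,0,\ldots,0)$. Thus $f_1(a)(V) = \QQ(h_0,0,\ldots,0)$, and the condition $\Dim{f_1(a)(V)} = 1$ forces $h_0 \ne 0$. A symmetric argument using $f_2(a)$ yields $g_0 \ne 0$. Replacing $g$ by $g/g_0$ and $h$ by $h/h_0$ preserves the decomposition $V = \QQ g + \QQ h$ and produces the required normalization.

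I do not anticipate a genuine obstacle here: the components $f_1(a),\ldots,f_4(a)$ were fashioned in cases A–D of the construction of $f$ for precisely this purpose, and the rank conditions $\rho_1 = \rho_2 = \rho_3 = \rho_4 = 1$ deliver exactly the normalized generators of $K_u$ and $K_v$ that the lemma demands.
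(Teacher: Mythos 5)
Your proof is correct, and it takes a genuinely different route from the paper's. The paper performs an explicit ``row reduction'': it starts from an arbitrary basis $\{d,e\}$ of $V$, normalizes $d_0=1$ using $\rho_1=1$, subtracts a multiple of $d$ from $e$ using $\rho_3=1$ to annihilate the $u$-part, then subtracts a multiple of that difference from $d$ using $\rho_4=1$ to annihilate the $v$-part, and finally invokes $\rho_2=1$ to verify the last normalization is possible. Your argument instead identifies the kernels $K_u=\ker(f_3(a)|_V)$ and $K_v=\ker(f_4(a)|_V)$ directly, applies rank--nullity to get $\Dim K_u=\Dim K_v=1$, notes $K_u\cap K_v=\{0\}$ since its elements vanish in both coordinate blocks, concludes $V=K_u\oplus K_v$ by a dimension count, and then uses $\rho_1,\rho_2=1$ only at the end to see that the leading coordinates $h_0,g_0$ are nonzero so one can rescale. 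Both arguments use exactly the same four dimension conditions; yours is more structural and reads like a one-shot linear-algebra observation, while the paper's is more procedural and closer to what one would implement as an algorithm (which fits the paper's computational framing). Neither has a gap, and the two deliver the same normalized basis $\{g,h\}$.
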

\begin{proof}
 The following is an implementation of row reduction.
  Let the elements $d = (d_0,\ldots,d_q;d_0',\ldots,d_q')$ and $e = (e_0,\ldots,e_q;e_0',\ldots,e_q')$ be a basis for $V$.
 Since $\Dim{f_1(a)(V)} = 1$, at the cost of reversing $d$ and $e$ and multiplying by a scalar, we may assume that $d_0 = 1$.  
 Since $\Dim{f_3(a)(V)} = 1$, we may find a scalar $\gamma$ for which $(\gamma d_0, \ldots, \gamma d_q) = (e_0, \ldots, e_q)$.  
 Set $\widetilde{g} \ceq e - \gamma d$.    
 Write $\widetilde{g} = (0,\ldots,0,\widetilde{g}_0,\ldots,\widetilde{g}_q)$.
 Since $\widetilde{g}$ is linearly independent from $e$ and $\Dim{f_4(a)(V)} = 1$, we see that there is some scalar $\delta$ for which $(\delta \widetilde{g}_0, \ldots, \delta \widetilde{g}_q) = (d_0' , \ldots, d_q')$.  
 Set $h \ceq  d - \delta \widetilde{g}$.   
 Using the fact that $\Dim{f_2(a)(V)} = 1$ we see that $\widetilde{g}_0 \neq 0$.  
 Set $g \ceq \widetilde{g}_0^{-1} \widetilde{g}$. 
 \end{proof}
\begin{lemma}
 For $0 \leq j \leq q$ we have $g_j = h_j$.
\end{lemma}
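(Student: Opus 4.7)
The plan is to exploit the dimension--one constraints $\rho_{4+j} = \Dim{f_{4+j}(a)(V)} = 1$ for $0 < j \leq q$, using the basis $\{g,h\}$ for $V$ produced in the previous lemma. Since $g = (0,\ldots,0; g_0,\ldots,g_q)$ and $h = (h_0,\ldots,h_q; 0,\ldots,0)$ with $g_0 = h_0 = 1$, the case $j = 0$ is immediate, so only $1 \leq j \leq q$ requires argument.

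First I would apply $f_{4+j}$ (which sends $(u;v) \mapsto (u_0 - v_0, u_j - v_j, 0,\ldots,0)$) to the two basis vectors:
\[
f_{4+j}(a)(h) = (h_0, h_j, 0, \ldots, 0) = (1, h_j, 0, \ldots, 0),
\qquad
f_{4+j}(a)(g) = (-g_0, -g_j, 0, \ldots, 0) = (-1, -g_j, 0, \ldots, 0).
\]
Because $\{g,h\}$ spans $V$, the image $f_{4+j}(a)(V)$ is spanned by these two vectors. The first has a nonzero leading entry, so it is itself nonzero; thus the hypothesis $\Dim{f_{4+j}(a)(V)} = 1$ forces the second to be a scalar multiple of the first. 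Comparing first coordinates gives the scalar $-1$, and then comparing second coordinates gives $-g_j = -h_j$, i.e., $g_j = h_j$.

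There is no real obstacle here; the step is essentially a bookkeeping consequence of how the maps $f_{4+j}$ were designed in items E of the construction. The construction was rigged precisely so that the rank--one constraint on $f_{4+j}(a)(V)$ enforces the $j$--th coordinates of the ``upper'' and ``lower'' halves of the chosen basis vectors to agree. This lemma is the first of several coordinate--identification results that together will let one read off a common rational point $c \in \QQ^q$ from the subspace $V$, which in turn will produce the element $b \in \QQ^{q-t}$ needed to witness $(a,b) \in V(S)(\QQ)$ in the converse direction of the sufficiency argument.
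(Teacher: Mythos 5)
Your proof is correct and takes essentially the same route as the paper: both exploit the constraint $\Dim{f_{4+j}(a)(V)}=1$ together with $g_0=h_0=1$ to force a proportionality relation that identifies $g_j$ with $h_j$. The only difference is presentational — you apply $f_{4+j}(a)$ to the two basis vectors and compare scalar multiples, whereas the paper writes out the image of a general element $\alpha h+\beta g$ and observes that $h_j\neq g_j$ would yield a two-dimensional image — but these are the same computation.
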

\begin{proof}
We arranged $g_0 = 1 = h_0$ in Lemma~\ref{claim1}.
The general element of $V$ has the form $\alpha h + \beta g$ for some $(\alpha,\beta) \in \QQ^2$.
For $0 < j \leq q$, the general element of $f_{4+j}(a)(V)$ has the form $(\alpha h_0 - \beta h_0,\alpha h_j - \beta g_j, 0, \ldots, 0) = ( (\alpha - \beta) h_0, (\alpha - \beta) h_j + \beta (h_j - g_j), 0, \ldots, 0)$. 
Since $h_0 \neq 0$, if $h_j \neq g_j$, then this vector space would have dimension two, contrary to the requirement that $f(a) \in E_{\nu;\rho,\rho_1,\ldots,\rho_\mu}$. 
\end{proof}
\begin{lemma}
For $0 < j \leq t$ we have $h_j = a_j$.
\end{lemma}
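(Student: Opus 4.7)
The plan is to mimic the computation performed for the previous lemma, but now applied to the linear map $f_{4+q+k+j}$ rather than $f_{4+j}$, using the rigid form of elements of $V$ that has already been extracted.

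First, I would invoke the two previously established structural facts: by the preceding lemmas, every element of $V$ can be written uniquely as $\alpha h + \beta g$ with $(\alpha,\beta) \in \QQ^2$, where $h = (h_0,\ldots,h_q;0,\ldots,0)$ and $g = (0,\ldots,0;g_0,\ldots,g_q)$ satisfy $h_0 = g_0 = 1$ and $h_i = g_i$ for all $0 \le i \le q$.

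Next, I would directly compute the image under $f_{4+q+k+j}$ for a fixed $j$ with $0 < j \le t$. By the definition in case G, a general element $\alpha h + \beta g$ of $V$ maps to
\[
\bigl(p_j(\alpha h_0 + \beta g_0) - q_j(\alpha h_j + \beta g_j),\,0,\ldots,0\bigr).
\]
Using $h_0 = g_0 = 1$ and $h_j = g_j$, this simplifies to
\[
(\alpha + \beta)(p_j - q_j h_j)\,(1,0,\ldots,0).
\]
Hence $f_{4+q+k+j}(a)(V)$ is the $\QQ$-span of the single vector $(p_j - q_j h_j)(1,0,\ldots,0)$, a subspace whose dimension is $1$ if $p_j \ne q_j h_j$ and $0$ otherwise.

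Finally, I would invoke the hypothesis that $V$ witnesses $f(a) \in E_{\nu;\rho,\rho_1,\ldots,\rho_\mu}$, which requires $\rho_{4+q+k+j} = \Dim{f_{4+q+k+j}(a)(V)} = 0$. Thus $p_j - q_j h_j = 0$, giving $h_j = p_j/q_j = a_j$, as desired. No step is a genuine obstacle: the argument is a one-line computation once the normalized basis $(h,g)$ from Lemma~\ref{claim1} and the equalities $h_j = g_j$ from the preceding lemma are in place — these are precisely the structural reductions that make the ``diagonal" coordinates of $V$ behave as a single rational number, which is then pinned down to $a_j$ by the vanishing constraint imposed by $f_{4+q+k+j}$.
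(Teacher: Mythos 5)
Your argument follows the same route as the paper, and it reaches the correct conclusion, but you have misstated the map in case G. As defined, $f_{4+q+k+j}(a)$ sends $(u;v) \mapsto (p_j u_0 - q_j u_j,\,0,\ldots,0)$; it depends only on the $u$-coordinates and not at all on $v$. Since $\alpha h + \beta g = (\alpha h_0,\ldots,\alpha h_q;\,\beta g_0,\ldots,\beta g_q)$, we have $u_0 = \alpha h_0 = \alpha$ and $u_j = \alpha h_j$, so the correct image is $(p_j\alpha - q_j\alpha h_j,\,0,\ldots,0) = \alpha(p_j - q_j h_j)(1,0,\ldots,0)$, with no contribution from $\beta$ or $g$ at all. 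Your version $(p_j(\alpha h_0 + \beta g_0) - q_j(\alpha h_j + \beta g_j),\,0,\ldots,0)$ is what you would get if the map read off $u_0 + v_0$ and $u_j + v_j$. The slip happens to be harmless here: after applying $h_0 = g_0 = 1$ and $h_j = g_j$ you end up with $(\alpha+\beta)(p_j - q_j h_j)(1,0,\ldots,0)$, which spans the same line, so the dimension-zero constraint still forces $q_j h_j = p_j$ and hence $h_j = a_j$. But the invocation of the preceding lemma's identity $h_j = g_j$ is unnecessary for this step; the paper needs no such appeal because the map simply never sees the $g$-coordinates. You should correct the formula and then the argument is exactly the paper's one-line computation.
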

\begin{proof}
The image of $\alpha h + \beta g$ under $f_{4 + q + k + j}(a)$ is $(p_j \alpha  - q_j \alpha h_j,0,\ldots,0)$ where $a_j = \frac{p_j}{q_j}$ in lowest terms.  
Since $\Dim{f_{4+q+k+j}(a)(V)} = 0$, we have $q_j h_j = p_j$.  
That is, $h_j = a_j$. 
\end{proof}
\begin{lemma}
For any $F \in S$, we have $F(h_1,\ldots,h_q) = 0$. 
\end{lemma}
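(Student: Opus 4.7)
The plan is to handle the two types of polynomials in the basic set $S$ separately, exploiting in each case the precise dimension constraint imposed by the corresponding coordinate map $f_j(a)$. Throughout, a general element of $V$ is written as $\alpha h + \beta g$ with $(\alpha,\beta) \in \QQ^2$, where by Lemma~\ref{claim1} and the previous claim we have $h = (h_0,\ldots,h_q;0,\ldots,0)$, $g = (0,\ldots,0;g_0,\ldots,g_q)$, and the identities $h_0 = g_0 = 1$ and $g_j = h_j$ for $0 \le j \le q$.

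First consider an affine polynomial $F = \lambda_{0,j} + \sum_{i=1}^q \lambda_{i,j} x_i$ in $S$. The map $f_{4+q+k+t+j}(a)$ is $(u;v) \mapsto (\sum_{i=0}^q \lambda_{i,j} u_i, 0, \ldots, 0)$, which depends only on the $u$-coordinates. Since $g$ has zero $u$-part, applying this map to $\alpha h + \beta g$ yields $\bigl(\alpha \sum_{i=0}^q \lambda_{i,j} h_i,\, 0,\ldots, 0\bigr)$. The prescribed dimension $\rho_{4+q+k+t+j}=0$ forces $\sum_{i=0}^q \lambda_{i,j} h_i = 0$, and since $h_0 = 1$ this reads $\lambda_{0,j} + \sum_{i=1}^q \lambda_{i,j} h_i = 0$, i.e., $F(h_1,\ldots,h_q) = 0$.

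Next consider a multiplicative relation $F = x_{i_1} x_{i_2} - x_{i_3}$ in $S$, corresponding to the map $f_{4+q+j}(a)\from(u;v) \mapsto (u_{i_1} + v_0, u_{i_3} + v_{i_2}, 0, \ldots, 0)$. Applied to $\alpha h + \beta g$, and using $g_0 = 1$ and $g_{i_2} = h_{i_2}$, the image is $(\alpha h_{i_1} + \beta,\, \alpha h_{i_3} + \beta h_{i_2},\, 0, \ldots, 0)$. As $(\alpha,\beta)$ ranges over $\QQ^2$, this describes the image of the linear map $\QQ^2 \to \QQ^2$ with matrix
\[
\begin{pmatrix} h_{i_1} & 1 \\ h_{i_3} & h_{i_2} \end{pmatrix}.
\]
The constraint $\rho_{4+q+j} = \Dim{f_{4+q+j}(a)(V)} = 1$ forces this matrix to have rank at most $1$, hence zero determinant: $h_{i_1} h_{i_2} - h_{i_3} = 0$, i.e., $F(h_1,\ldots,h_q) = 0$.

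Since $S$ is basic, every $F \in S$ falls into one of these two cases, which completes the proof. The computations are routine once the structural lemmas ($g_0 = h_0 = 1$ and $g_j = h_j$) are in hand; the only point requiring care is bookkeeping between the $u$-block and $v$-block of coordinates and making sure the map's rank constraint is translated correctly into a polynomial identity in the $h_j$.
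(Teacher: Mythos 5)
Your proposal is correct and follows the same overall strategy as the paper: split into the affine and multiplicative cases and read off the required identity in the $h_j$'s from the prescribed image dimension in each case. The affine case is identical. For the multiplicative relation, the paper instead observes that the $1$-dimensional image forces the second coordinate to be a fixed scalar $\gamma$ times the first, then solves for $\gamma$ by specializing $(\alpha,\beta)$ and handles the edge case $h_{i_1} = h_{i_3} = 0$ separately; your determinant argument packages this more cleanly — the $2\times 2$ matrix having rank at most $1$ directly gives $h_{i_1}h_{i_2} - h_{i_3} = 0$ with no case split on vanishing of $h_{i_1}$. This is a minor stylistic improvement rather than a genuinely different route; the essential content (translating a rank constraint on the coordinate maps applied to $V = \QQ h + \QQ g$ into the polynomial identity) is the same.
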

\begin{proof}
If $F$ is an affine polynomial, that is, if $F = \lambda_{0,j} + \sum_{i=1}^q \lambda_{i,j} x_i$ for some $0 < j \leq \ell$, then because $\Dim{f_{4 + q + k + t + j}(a)(V)} = 0$, we have $\lambda_{0,j} + \sum_{i=1}^q \lambda_{i,j} h_i = 0$.    
On the other hand, if $F$ is a multiplicative relation, that is, if $F = x_{i_{1,j}} x_{i_{2,j}} - x_{i_{3,j}}$ for some $0< j \leq k$, then because $\Dim{f_{4+q+j}(a)(V)} = 1$ we see that there is some scalar $\gamma$ so that for any pair $(\alpha,\beta)$ we have $\gamma (\alpha h_{i_{1,j}} + \beta) = \alpha h_{i_{3,j}} + \beta h_{i_{2,j}}$.  
Specializing to $\beta = 0$ and $\alpha = 1$, we have $\gamma = \frac{h_{i_{3,j}}}{h_{i_{1,j}}}$ (unless both are zero, in which case the equality $h_{i_{1,j}} h_{i_{2,j}} = h_{i_{3,j}}$ holds anyway), which we substitute to obtain $\frac{h_{i_{3,j}}}{h_{i_{1,j}}} = h_{i_{2,j}}$, or $h_{i_{2,j}} h_{i_{1,j}} = h_{i_{3,j}}$. 
\end{proof}

Taking $b \ceq (h_{t+1},\ldots,h_{q})$ we see that $(a,b) \in V(S)(\QQ)$.  
\end{proof}

\section*{Acknowledgements}

We thank Bernd Sturmfels for suggesting
the connection with Hilbert's Tenth Problem over the rationals.

The research of various of the authors was supported by the U.S. Department of Energy Office of Science, 
Office of Advanced Scientific Computing Research, Applied Mathematics program under awards 
DE-SC0004938, DE-SC0003959, and DE-SC0010200; by the U.S. Department of Energy Office 
of Science, Office of Advanced Scientific Computing Research, X-Stack program under awards 
DE-SC0005136, DE-SC0008699, DE-SC0008700, and AC02-05CH11231; by 
DARPA award HR0011-12-2-0016 with contributions from Intel, Oracle, and MathWorks,
and by
NSF grants
DMS-1363324 and 
DMS-1001550.   

\bibliographystyle{plain}
{\small%
\bibliography{CDKSY15}}
\end{document}